\newtheorem{theorem}{Theorem}[section]
\newtheorem{proposition}[theorem]{Proposition}
\newcommand{\diam}{\operatorname{diam}} 
\newcommand{\divergence}{\operatorname{div}} 
\newcommand{\supp}{\operatorname{supp}} 
\newcommand{\argmin}{\operatorname{argmin}} 
\newcommand{\spanlin}{\operatorname{span}}
\title{Approximation of skewed interfaces with tensor-based model reduction procedures: application to the reduced basis hierarchical model reduction approach}
\author{Mario Ohlberger\footnote{Institute for Computational and Applied Mathematics, University of Muenster,
Einsteinstr. 62, 48149 M\"{u}nster, Germany, \, \texttt{mario.ohlberger@uni-muenster.de, kathrin.smetana@wwu.de}} \, and Kathrin Smetana$^{*}$}
\begin{document}
\maketitle

\begin{abstract}
In this article we introduce a procedure, which allows to recover the potentially very good approximation properties of tensor-based model reduction procedures for the solution of partial differential equations in the presence of interfaces or strong gradients in the solution which are skewed with respect to the coordinate axes. The two key ideas are the location of the interface either by solving a lower-dimensional partial differential equation or by using data functions and the subsequent removal of the interface of the solution by choosing the determined interface as the lifting function of the Dirichlet boundary conditions. We demonstrate in numerical experiments 
for linear elliptic equations and the reduced basis-hierarchical model reduction approach that the proposed procedure locates the interface well and yields a significantly improved convergence behavior even in the case when we only consider an approximation of the interface. 
\end{abstract}

\vspace*{\baselineskip}
\noindent
{\bf Keywords:} dimensional reduction, tensor-based model reduction, hierarchical model reduction, reduced basis methods, proper generalized decomposition, adaptive modelling \\

\noindent
{\bf AMS Subject Classification:}  65N30,35C20,35J25,35J60 \\

%% \linenumbers

\section{Introduction}
Fluid flow problems such as subsurface flow or blood flow problems often feature one distinguished (dominant) direction along which the essential dynamics develop. Therefore, tensor-based model reduction procedures such as the proper generalized decomposition (PGD) method, the hierarchical model reduction (HMR), and the reduced basis-hierarchical model reduction (RB-HMR) approach are well suited to compute an efficient and accurate approximation of the full(-dimensional) solution of the underlying partial differential equation (PDE). The common idea of such tensor-based model reduction procedures is to approximate the full solution by a truncated tensor product decomposition of the form 
$
p_{m}(x,y) = \sum_{l=1}^{m} \bar{p}_{l}(x)\phi_{l}(y)
$, where $x,y$ lie in the computational domain $\Omega$ and are associated with different coordinate axes. The resulting model reduction approaches then differ from one another in the way the tensor products $\bar{p}_{l}(x)\phi_{l}(y)$, $l=1,...,m$ are computed. 
 
In the PGD method, introduced in \cite{AmMoChKe06,Nouy07}, the tensor products are determined by iteratively solving the Euler-Lagrange equations associated with the considered problem. Alternatively, and in some cases equivalently, they may be computed as the minimizer of the variational functional corresponding to the considered PDE \cite{LeBLelMad09,CanEhrLel11}. For an overview on the PGD method we refer to \cite{ChiCue14,ChKeLe13}.

In contrast, the HMR approach, introduced in \cite{VogBab81a,VogBab81b, VogBab81c} and studied in a more general 
geometric setting in \cite{ErnPerVen08, PerErnVen10}, considers a reduced space which is a combination of the full (Finite Element) solution space along the dominant (flow) direction with a reduction space spanned by orthonormal basis functions $\{ \phi_{l} \}_{l=1}^{m}$ in the so-called transverse direction. The function $p_{m}$ then solves a reduced problem obtained by a Galerkin projection onto the reduced space. While in \cite{VogBab81a,VogBab81b, VogBab81c,ErnPerVen08, PerErnVen10} the reduction space is chosen a priori as the span of trigonometric or Legendre polynomials, a highly nonlinear approximation is employed for the construction in the RB-HMR approach \cite{OhlSme11,OhlSme14,SmeOhl14,Sme13}. To this end, first a parametrized problem in the transverse direction is derived from the full dimensional problem, where the parameters reflect the influence from the unknown solution in the dominant direction. Then, reduced basis (RB) techniques \cite{PatRoz2006,RoHuPa08} are applied for the efficient construction of the reduction space from snapshots of the parametrized transverse problem, exploiting their good approximation properties \cite{DePeWo12,KahVol07}. Thus, both in the construction of the solution manifold of the parametrized lower-dimensional problem and in the subsequent choice of the basis functions, information on the full solution is included to obtain a fast convergence of the reduced solution to the full one. In general, this yields an improved convergence rate compared to a priori chosen reduction spaces \cite{OhlSme14,Sme13}. 

In spite of their mentioned good performance for say fluid flow problems, the approximation capacity of tensor-based model reduction procedures suffers considerably if the target solution exhibits an interface, i.e. a steep gradient or even a discontinuity, which is skewed with respect to the coordinate axes. Such behavior can often be encountered in fluid flow problems and particularly in subsurface flow, where, depending on the permeability of the soil, the saturation profile may form a skewed interface along the water table. This deteriorated convergence behavior is due to the fact that for a full approximation of the skewed interface the saturation or concentration profile in each point $x$ in the dominant direction has to be included. In this article we introduce a new ansatz to tackle this problem. We propose to first approximately locate the interface by solving a lower-dimensional model or for simple model problems to infer the location of the interface from data functions. We assume that we have Dirichlet data available at the positions where the interface intersects the boundary of the considered computational domain. Thus we can then infer an approximation of the shape of the interface from the known Dirichlet boundary conditions. Otherwise an approximate shape of the interface can be computed in a preprocessing step. Finally, we prescribe the obtained saturation or concentration profile as the lifting function of the Dirichlet boundary conditions. In this way, we hope to remove the part of the full solution, which causes the bad convergence rate from the approximation process and therefore significantly improve the convergence behavior of the employed tensor-based model reduction approach. This will be demonstrated in numerical experiments. 

Alternative to our approach, in \cite{GerLom14} an interface or shock propagating in time is included in a time-dependent basis, which is spanned by the eigenfunctions of a linear Schr\"{o}dinger operator and yields a numerical approximation of a Lax pair. In \cite{LeMMath10} a reduced basis in space is constructed via a proper orthogonal decomposition of snapshots and the evolution of the coefficients in time is computed by a suitable mapping and thus in an equation-free manner. In the case of parametrized PDEs it is well-known that convection dominated evolution equations where shocks may develop are difficult to tackle with RB methods \cite{RoHuPa08,PatRoz2006} if linear spaces are employed. The reason for this is that similar to the setting of the skewed interface considered in this article the solution for nearly every time step has to be included in the basis, which deteriorates the approximation properties of the RB space. Therefore, in \cite{OhlRav13} a nonlinear approximation is applied by employing the method of freezing to decompose the target solution into a shape and group component. Then RB methods are applied to approximate the former while the group component say captures a drift of the interface. Also in \cite{TaPeQu13} the authors propose to employ a nonlinear approximation strategy for the approximation of the solution of parametrized conservation laws in one space dimension. The approach in \cite{TaPeQu13} consists of a partition of the domain induced from a suitable approximation of the shock curve such that the solution in each obtained subdomain is regular. The empirical interpolation method \cite{BMNP04} --- an interpolation strategy from the RB framework --- is used to reconstruct the smooth parts of the solution in the subdomains. 

The remainder of this article is organized as follows. In Section \ref{ansatz_interface} we first describe our approach for the location of the interface using the example of subsurface flow and subsequently outline how the location of the interface can be inferred from data functions for linear advection-diffusion problems (Section \ref{locate_interface}). Afterwards, we demonstrate for linear advection-diffusion problems how the information on the location of the interface can be used to remove the interface from the model reduction procedure in Section \ref{remove_interface}. In Section \ref{1d_gd} we exemplify this ansatz for the RB-HMR method  and present an approach for the derivation of a lower-dimensional parametrized problem particularly suited for the presence of interfaces, which will be validated in Section \ref{numerics_gd}. The capacity of the ansatz proposed in Section \ref{ansatz_interface} to improve the convergence behavior is demonstrated in Section \ref{numerics_gd} for linear problems for the RB-HMR approach in several numerical experiments, including a test case, where we do not include the exact interface but only an approximation.

%\begin{figure}[h!]
%\center
%{\includegraphics[scale=0.08]{plain/groundwater4b-eps-converted-to.pdf} }
%\caption{{\footnotesize Schematic picture of subsurface flow with a skewed water table}\label{fig_groundwater}}
%\end{figure}

\section[An ansatz for approximating skewed interfaces]{An ansatz for approximating skewed interfaces with tensor-based model reduction approaches}\label{ansatz_interface}

Let $\Omega \subset \mathbb{R}^2$ denote the computational domain  with Lipschitz boundary $\partial \Omega$, $\Sigma_{D} \subset \partial \Omega$ the Dirichlet boundary,  and $\Sigma_{N} \subset \partial \Omega$ the Neumann boundary. We require that $\Sigma_{D}$ has positive Hausdorff measure. We assume that $\Omega$ can be considered as a two-dimensional fiber bundle:
\begin{equation*}
\Omega = \underset{x \in \Omega_{1D}}{\bigcup} \{x\} \times \omega_x,
\end{equation*}
where $\Omega_{1D} = (x_0,x_1)$ and $\omega_x$ denotes the transverse fiber associated with $x \in \Omega_{1D}$. Note that the generalization to domains with a more complex geometry is straightforward \cite{Per12}.
We define for any $x \in \Omega_{1D}$ the mapping
$
 \psi(\cdot;x) : \omega_x \rightarrow \hat{\omega}
$
between the fiber $\omega_x$ associated with $x \in \Omega_{1D}$ and a reference fiber $\hat{\omega}$ with $\hat{\omega} = ]y_{0},y_{1}[$. Furthermore, we introduce the mapping $\Psi: \Omega\rightarrow \widehat{\Omega}$, defined as $\hat{y} = \psi(y;x)$ for $y \in \omega_x$ and $\hat{x} = x$ for $x \in \Omega_{1D}$. We require that $\psi(\cdot;x)$ is a $C^{1}$-diffeomorphism and that the geometric transformation $\Psi$ is differentiable with respect to $z=(x,y) \in \Omega$.

\subsection{Locating the interface}\label{locate_interface}

In this subsection we propose several approaches in order to (approximately) locate the (skewed) interface. First, we address our motivating example of saturated-unsaturated subsurface flow, where the interface can be located by solving a reduced model. Although the approach in this paper is mainly intended for situations such as subsurface flow, where skewed interfaces naturally occur, we can also think of some simplified settings in which more heuristic approaches can be used to locate the interface. Therefore, we address in a second step a linear advection-diffusion problem. Here, thanks to the nature of the problem, the interface is induced by the data functions.

\paragraph*{Saturated-unsaturated subsurface flow} Let $\Omega$ be occupied by a homogeneous soil. In the time interval $[0,T]$ we consider the Richards equation (see e.g. \cite{Bear88,BOSS13})
for the water saturation $s: \Omega \times [0,T] \rightarrow [0,1]$ and the water pressure $p: \Omega \times [0,T] \rightarrow \mathbb{R}$
\begin{equation}\label{richards}
 \frac{ds}{dt}- \divergence\left(K \kappa(s)( \nabla p + {\boldsymbol g} )\right) = 0 \quad \text{in $\Omega \times [0,T]$}. 
\end{equation}
Here, $\kappa$ denotes the relative permeability, $K$ the hydraulic conductivity\footnote{Following \cite{Bear88} the hydraulic conductivity describes the ability of the soil to conduct water through it under hydraulic gradients.}, and ${\boldsymbol g}$ the gravity vector. We have normalized the porosity and the viscosity. 
\begin{figure}[t]
\center
{\includegraphics[scale=0.13]{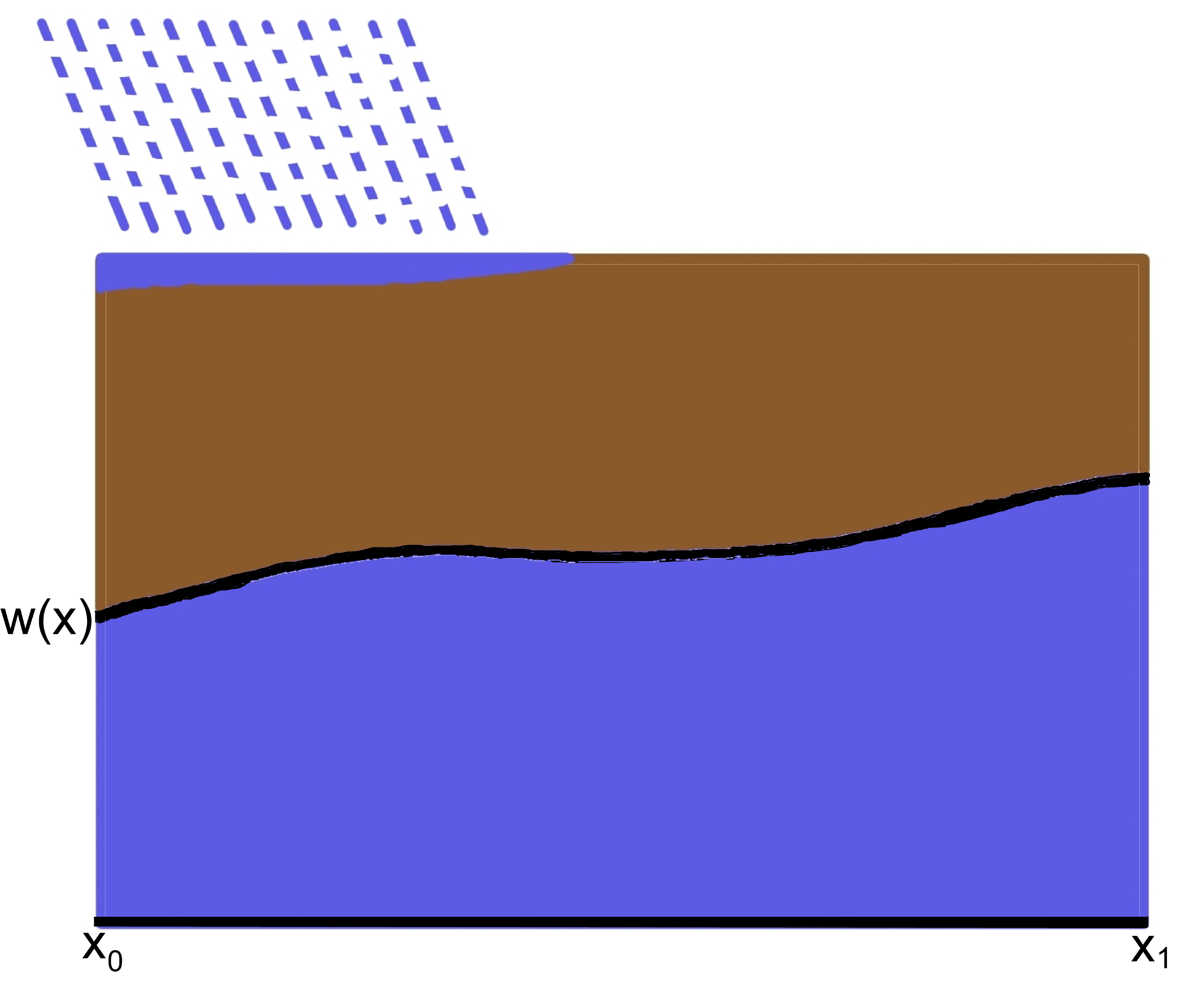} }
\caption{ Schematic picture of subsurface flow with a skewed water table $w: \Omega_{1D} = (x_{0},x_{1}) \rightarrow \mathbb{R}$.\label{fig_groundwater}}
\end{figure}

To locate the water table, we first consider the groundwater flow equation for the piezometric head $\varphi = y + p/(\rho g)$ with a free surface. The latter is characterized by an atmospheric pressure and thus describes the location of the water table. Furthermore, $\rho$ denotes the density of the fluid and $g$ designates  the gravity acceleration. Assuming the incompressibility of water and a flat bottom of the considered domain $\Omega$ at level $y = 0$, the piezometric head or potential can be described by the following PDE \cite{Bear88} 
\begin{align}
\nonumber- K \Delta \varphi &= 0 \quad t \in [0,T] \enspace \text{and} \enspace 0 \leq y \leq w(x),\\
\label{groundwater}\\[-1.5ex]
\nonumber \frac{d\varphi}{dt}  - K\left[ \left(\frac{d\varphi}{dx}\right)^{2} + \left(\frac{d\varphi}{dy}\right)^{2}\right] + \frac{d\varphi}{dy} (K + N) - N &= 0 \quad t \in [0,T] \enspace \text{and} \enspace y = w(x).
\end{align}
Here, $N$ accounts for accretion and suitable initial conditions and additional boundary conditions  are prescribed. Starting from \eqref{groundwater} one can derive a dimensionally reduced model for the height of the water table $w$ by assuming a hydrostatic pressure distribution \cite{Dupuit1863} or by employing an asymptotic expansion \cite{Dagan67}. The PDE for the reduced model then reads 
\begin{equation}\label{Boussinesq}
\frac{dw}{dt} - \frac{K}{2} \Delta w^{2} + N = 0, \quad \text{in} \enspace \Omega_{1D} \times [0,T].
\end{equation}
One possible scenario and the corresponding water table $w$ are depicted in Fig.~\ref{fig_groundwater}.

Note that thanks to the structure of \eqref{Boussinesq}, solving \eqref{Boussinesq} has the same computational complexity as approximating the solution of \eqref{richards} in the tensor space 
\begin{equation*}
V_{1} = \left\{ v(x,y,t) = \bar{v}(x,t)\, \phi(\psi(y;x)),\enspace \mbox{with} \enspace \bar{v}_{1} \in L^{2}([0,T],X)\,, \phi_{1} \in Y \,\right \},
\end{equation*}
where $H^{1}_{0}(\Omega_{1D}) \subseteq X \subseteq H^{1}(\Omega_{1D})$
and $H^{1}_{0}(\widehat{\omega}) \subseteq Y \subseteq H^{1}(\widehat{\omega})$. It is therefore reasonable from a computational perspective to solve \eqref{Boussinesq} in a preprocessing step, if the so gained information accelerates the convergence of the tensor-based model reduction procedure.

Employing the location of the water table or a similar interface described by the solution $w$ of \eqref{Boussinesq}, we can define a function $\mathfrak{h}: \Omega \rightarrow \mathbb{R}$ which describes the corresponding saturation or concentration profile. Here, we use the shape of the interface at the Dirichlet boundary as a shape for the whole interface, where the location of the interface is given by $w$ and a corresponding indicator function $\varphi_{w}: \Omega \rightarrow \mathbb{R}$, which is defined as $\varphi_{w}(x,y) = 1$ for $y < w(x)$ and $\varphi_{w}(x,y) = 0$ for $y > w(x)$. If we do not have suitable boundary data available, we suggest to compute an approximation of the solution of the PDE on a mesh that is coarse along one direction and fine along the other direction to obtain an approximation of the shape of the interface. 

\paragraph*{Linear advection-diffusion equation} Similar to the situation above also for linear elliptic and parabolic problems interfaces occurring in the solution can be directly related to interfaces in the data functions. Here, we consider a linear advection-diffusion problem as parabolic problems can be reduced to the former case via discretizing in time. In detail, we consider the following (simplified) model problem for a global pressure $\tilde{p}: \Omega \rightarrow \mathbb{R}$
\begin{alignat}{2}
\nonumber \nabla\cdot  ( k \nabla \tilde{p}) + \mathbf{b} \cdot \nabla \tilde{p} &= F \quad & \text{in} \enspace \Omega,\\
\label{strong form}\tilde{p}&= g_{D} &\text{on} \enspace \Sigma_{D}, \\
\nonumber k \nabla \tilde{p} &= g_{N} &\text{on} \enspace \Sigma_{D}, 
\end{alignat}
where $k \in L^{\infty}(\Omega)$ with $0< k_{1} \leq k \leq k_{2}$ for constants $k_{1},k_{2} \in \mathbb{R}^{+}$ and $\mathbf{b} = (b_{1},b_{2})^{t} \in [W^{1,\infty}(\Omega)]^{2}$ with $\divergence b \leq 0$ and $F \in L^{2}(\Omega)$. Moreover, $g_{D}$ and $g_{N}$ are given Dirichlet and Neumann boundary conditions, respectively.

First, we consider the cases where either the right hand side $F$ or the diffusion coefficient $k$ exhibit a skewed interface. If we have $\mathbf{b}=0$ then the location of the interface in the solution $\tilde{p}$ equals the one in the respective data functions. In detail the areas where the shape of $\tilde{p}$ changes from nearly flat to a steep slope and vice versa are the areas where $\tilde{p}$ exhibits the highest/smallest curvature. As a consequence if the interface is induced by $F$, and $k$ is constant or varies only moderately, we expect to be able to identify the ``boundary'' of the interface by determining where $F$ has a maximal (or minimal) derivative in $y$-direction (in $x$-direction). Here, it can be inferred either from the shape of the interface at the Dirichlet boundary or from an approximation of the shape computed in a preprocessing step as outlined in the previous paragraph whether one has to search for the maximal or minimal values of the derivative. In case that the interface is induced by $k$, and $F$ is constant or varies only moderately, we can use the smallest \emph{and} largest derivative in $y$-direction of $k$ to deduce (an approximation) of the location of the interface. A possible numerical procedure which determines the location of the interface from $F$ and $k$ in those cases is discussed below. Then, we can use this ``boundary'' of the interface either together with the prescribed Dirichlet data or an approximation of the shape to define a function $\mathfrak{h}: \Omega \rightarrow \mathbb{R}$ that represents an approximation of the interface in the solution $\tilde{p}$. 

In case that the interface in $\tilde{p}$ is induced by the right hand side $F$, we expect that for $\mathbf{b}\neq  0$ we can still exploit the procedure discussed above for $F$ even for rather strong advective fields for the following reason: As we consider steep interfaces we expect that the curvature of $\tilde{p}$ dominates the gradient of $\tilde{p}$ times the advective field, which is why we expect that we can still obtain a good approximation of the location of the interface in $\tilde{p}$ by considering only $F$ and neglecting $\mathbf{b}$. This will be demonstrated in numerical experiments in \S \ref{numerics_gd}. In contrast if $k$ exhibits an interface we expect that unless the advective field is parallel to this interface the advective term $\mathbf{b} \cdot \nabla \tilde{p}$ will dominate the term $\nabla \cdot (k \nabla \tilde{p})$ and as a consequence we will observe that $\tilde{p}$ has (strong) boundary layers and only a rather moderate slope which does not require additional measures to improve convergence of tensor-based model reduction procedures. 

Finally, if we have $k=F=0$ and an ``inflow'' boundary conditions on one part of the Dirichlet boundary then for constant advective fields it is possible to infer the location of the interface and thus $\mathfrak{h}$ in $\tilde{p}$ from $\mathbf{b}$. For one possible example in this context see \cite{DHSW12}.

We close this subsection with the proposal of a numerical procedure that determines the location of the interface in $\tilde{p}$ if this interface is either induced by $F$ or $k$. To this end we introduce a fine partition $\tau_{h}$ of $\hat{\omega}$ with $n_{h}$ elements and a coarse partition $\mathcal{T}_{H'}$ of $\Omega_{1D}$ with $N_{H'}$ elements. Here, it is recommended that the partition of $\hat{\omega}$ has approximately the same number of elements as the mesh that is employed for the FE computations in the model reduction procedure, while, due to computational feasibility, the partition $\mathcal{T}_{H'}$ should be significantly coarser as the mesh of the FE approximation used in the model reduction procedure. Then we evaluate $k$ or $F$ on the partition $\mathcal{T}_{H'}\times\tau_{h}$, determine for each grid point in $x$-direction the elements in $y$-direction with the highest and/or smallest derivative in $y$-direction (or vice versa), and interpolate between the midpoints of those elements to obtain an approximative location of the interface in the solution $\tilde{p}$. Note that by using the coarse partition $\mathcal{T}_{H'}\times\tau_{h}$ and an associated Finite Element (FE) space one can compute a (rough) approximation of the solution of the PDE which can then be used to infer an approximation of the shape of the interface.

\subsection{Removing the interface from the model reduction procedure}\label{remove_interface}

For the sake of clarity we restrict ourselves for the rest of this article to the model problem \eqref{strong form}. The ideas in the nonlinear and time-dependent setting are essentially the same. To remove the skewed interface from the model reduction procedure we propose to prescribe the saturation or concentration profile $\mathfrak{h}$ of the preceding subsection as the lifting function of the Dirichlet boundary conditions.  In detail we define the solution space $V$ such that $H^{1}_{0}(\Omega) \subseteq V \subseteq H^{1}(\Omega)$ and consider the following full problem:  
\begin{equation}\label{fullprob_gd}
\text{Find} \enspace p \in V: \enspace a(p,v) = f(v) - a(\mathfrak{h},v) \quad \forall v \in V,
\end{equation}
where 
\begin{equation*}
a(p,v):= \int_{\Omega} k \nabla p \nabla v \, dx dy + \int_{\Omega} \mathbf{b} \cdot \nabla p v \, dx dy \quad \text{and} \quad f(v) = \int_{\Omega} Fv \, dx dy.
\end{equation*}
The full solution is given as $\tilde{p} = p + \mathfrak{h}$. 

Next, we recall that the spaces $X$ and $Y$ introduced in \S \ref{locate_interface} satisfy 
$H^{1}_{0}(\Omega_{1D}) \subseteq X \subseteq H^{1}(\Omega_{1D})$
and $H^{1}_{0}(\widehat{\omega}) \subseteq Y \subseteq H^{1}(\widehat{\omega})$,
supposing compatibility with the boundary conditions prescribed on $\partial \Omega$. We approximate $p$ by a linear combination of tensor products 
\begin{equation}\label{reduced sol}
p_{m} := \sum_{l = 1}^{m} \bar{p}_{l}(x) \phi_{l}(\psi(y;x)),
\end{equation}
where $\bar{p}_{l} \in X$ and $\phi_{l} \in Y$, $l = 1,...,m$, and define the reduced solution $\tilde{p}_{m}:= p_{m} + \mathfrak{h}$. Depending on the employed tensor-based model reduction method the function $p_{m}$ solves a reduced problem obtained by a Galerkin projection as for instance in the HMR \cite{PerErnVen10} or the RB-HMR approach (cf.~\cite{OhlSme14} and Section \ref{1d_gd}). Alternatively in the PGD method (see for instance \cite{LeBLelMad09}) the lower dimensional functions $\bar{p}_{l}$ and $\phi_{l}(\psi(y;x))$, $l=1,...,m$ either minimize a variational functional\footnote{
Using the PGD method (cf.~\cite{LeBLelMad09}), we may obtain $p_{m}$  by computing sequentially the pair of functions $(\bar{p}_{l},\phi_{l})$, $l=1,...,m$, as the solution of the minimization problem
\begin{equation*}
(\bar{p}_{m},\phi_{m}) = \underset{\bar{p}\in X, \phi \in Y}{\argmin} \mathcal{E}\left(\sum_{l = 1}^{m-1} \bar{p}_{l}(x) \phi_{l}(\psi(y;x)) + \bar{p}(x) \phi(\psi(y;x)) \right),
\end{equation*} 
where $\mathcal{E}: H^{1}(\Omega) \rightarrow \mathbb{R}$ shall be the variational functional whose minimizer $p \in V$ is the unique solution of \eqref{fullprob_gd} 
for a symmetric bilinear form $a(\cdot , \cdot )$.}  or solve the associated Euler-Lagrange equations (see for instance \cite{LeBLelMad09,AmMoChKe06,ChiCue14}).

Let us assume for a moment that $\mathfrak{h}$ describes the exact location of the interface. Let us furthermore assume that the solution $p$ of \eqref{fullprob_gd} for $\mathfrak{h}\equiv 0$, meaning in the case that no interface is present can be approximated exponentially fast by a tensor-based approximation as $p_{m}$ in \eqref{reduced sol}. Thanks to \eqref{fullprob_gd} we hope that we may then also find for any function $\mathfrak{h}$ an approximation $p_{m}$ \eqref{reduced sol} which converges with the same or a slightly deteriorated exponential rate in $m$ to the solution $p$ of \eqref{fullprob_gd}. In that sense we hope to be able to recover a possibly exponential convergence rate of a tensor-based model reduction procedure in the case of a skewed interface, which will be verified in the numerical experiments in \S \ref{numerics_gd}. Note that it depends on the applied tensor-based model reduction approach and the underlying problem whether an exponential rate can be realized or not. 

For future reference we close this section by introducing some notations. We denote by $a_{s}( \cdot , \cdot )$ the symmetric part of $a( \cdot , \cdot )$ and define a $V$-inner product and the induced $V$-norm as
$
( \cdot , \cdot )_{V} := a_{s}( \cdot , \cdot )$ and $\| \cdot \|_{V} := \sqrt{(\cdot , \cdot )_{V}}.
$
Finally, we define the coercivity and the continuity constants of the bilinear form $a(\cdot , \cdot )$ with respect to the $V$-norm as
$
c_{0} := \inf_{v \in V} (a(v,v)/\| v\|_{V}^{2})$ and $c_{1} := \sup_{v \in V} \sup_{w \in V} (a(v,w)/\| v\|_{V} \| w \|_{V}).
$

\section{Exemplification for the RB-HMR approach}\label{1d_gd}

In this section we exemplify the ansatz for the treatment of skewed interfaces proposed in the previous section \ref{ansatz_interface} for the RB-HMR approach introduced in \cite{OhlSme14,OhlSme11}. %Considering problems that exhibit a dominant spatial direction, the idea of RB-HMR is
%to perform a Galerkin projection onto a reduced space, which combines the full solution space in the dominant ($x$-)direction along $\Omega_{1D}$ with a reduction space $Y_{m} \subset Y$ in the transverse direction. The latter is spanned by the modal orthonormal basis functions $\{\phi_{l}\}_{l=1}^{m}$ and constructed by employing a highly nonlinear approximation. To this end first a parametrized problem in the transverse direction is derived
%from the full problem where the parameters reflect the influence from the unknown solution in the dominant direction. Exploiting the good approximation properties of RB methods \cite{DePeWo12,KahVol07}, then a reduction space is constructed to approximate a suitable solution manifold of the parametrized transverse problem from snapshots and a subsequent proper orthogonal decomposition. For an efficient construction of the snapshots adaptive refinement in parameter space similar to \cite{HaaOhl2008b,HaaDihOhl2011} is applied based on a rigorous a posteriori error estimator \cite{OhlSme14}. \\
To obtain a good approximation of solutions exhibiting a skewed interface, we have to eliminate the interface in the solutions of the lower-dimensional problem in the transverse direction. It is therefore crucial to reproduce the balance of the relative terms in the equation of the full problem \eqref{fullprob_gd}. This is difficult to realize using the approach introduced in \cite{OhlSme14} as choosing the evaluation of the unknown part of the solution in $x$-direction as a parameter allows too much variation in the scaling of the respective terms to counterbalance them. Thus, we present in \S \ref{1dproblem_bilFEM} a new approach for the derivation of a lower dimensional problem based on a FE discretization of the full problem and exemplify it for an advection-diffusion equation in \S \ref{example_3}. We also briefly describe the algorithms for the construction of the reduction space introduced in \cite{OhlSme14} and comment on necessary adaptations due to the exchange of the parametrized 1D problem. We begin this section by formulating the reduced problem of the RB-HMR approach for the full problem \eqref{fullprob_gd}.

\subsection{Formulation of the reduced problem}

We assume orthonormality of the set of functions $\{ \phi_{l} \}_{l = 1}^{m}$ with respect to the $L^{2}$-inner product on $\widehat{\omega}$ and define the reduced space
\begin{equation*}
V_{m} = \left\{ v_{m}(x,y) = \underset{k = 1}{\overset{m}{\sum}}\, \overline{v}_{k}(x)\, \phi_{k}(\psi(y;x)),\enspace \mbox{with} \enspace \overline{v}_{k}(x) \in X,\, x \in \Omega_{1D}, \, y \in \omega_{x}\, \right \},
\end{equation*}
where
\begin{equation*}
\overline{v}_{k}(x) = \int_{\widehat{\omega}}  v_{m}(x,\psi^{-1}(\hat{y};x))\,\phi_{k}(\hat{y})\, d\hat{y}, \qquad k = 1,...,m.
\end{equation*}
By using the Galerkin projection we obtain the reduced problem:
\begin{eqnarray}\label{red_prob_gd}
\text{Find} \enspace p_{m} \in V_{m}: \quad a(p_{m}, v_{m})= f(v_{m}) - a(\mathfrak{h}, v_{m}) \enspace \forall\, v_{m} \in V_{m},
\end{eqnarray}
which can be rewritten as: Find $\overline{p}_{k} \in X$, $k=1,\hdots, m$ such that 
\begin{eqnarray*}
\sum_{k = 1}^{m} a(\overline{p}_{k} \phi_{k}, \xi \phi_{l})= f(\xi \phi_{l}) - a(\mathfrak{h},\xi \phi_{l}) \enspace \forall\, \xi \in X \enspace \mbox{and} \enspace l = 1,...,m.
\end{eqnarray*}
To compute an approximation of the coefficient functions $\overline{p}_{k}(x)$, $k = 1,...,m$, we introduce a subdivision $\mathcal{T}_{H}$ of $\Omega_{1D}$ with elements $\mathcal{ T}_{i} = (x_{i-1},x_{i})$ of width $H_{i}= x_{i} - x_{i-1}$ and maximal step size $H := \max_{\mathcal{ T}_{i}}\, H_{i}$. We also introduce a corresponding conforming FE space $X^{H} \subset X$ with $\dim(X^{H}) = N_{H} < \infty$ and basis $\xi_{i}^{H}$, $i=1,...,N_{H}$. Combining $X^{H}$ with the reduction space $Y_{m}:=\spanlin\{\phi_{1},\hdots,\phi_{m}\}$, we define the discrete reduced space
\begin{equation*}
V_{m}^{H} = \left\{ v_{m}^{H}(x,y) = \underset{k = 1}{\overset{m}{\sum}}\, \overline{v}_{k}^{H}(x)\, \phi_{k}(\psi(y;x)),\enspace \mbox{with} \enspace \overline{v}_{k}^{H}(x) \in X^{H},\, x \in \Omega_{1D}, \, y \in \omega_{x}\, \right \},
\end{equation*}
and obtain the discrete reduced problem: Find $\overline{p}_{k}^H \in X^H$, $k = 1,...,m$, such that
\begin{eqnarray}\label{prob_gd}
\sum_{k = 1}^{m} a(\overline{p}_{k}^H \phi_{k}, \xi_{i}^H \phi_{l})= f(\xi_{i}^H \phi_{l}) - a(\mathfrak{h}, \xi_{i}^H \phi_{l}) \enspace \text{for} \enspace i= 1,...,N_{H} \enspace \mbox{and} \enspace l = 1,...,m,
\end{eqnarray}
where the discrete reduced solution is defined as $\tilde{p}_{m}^{H}:=p_{m}^{H} + \mathfrak{h}$ for $p_{m}^{H}(x,y) =  \sum_{k = 1}^{m}\, \overline{p}_{k}^{H}(x)\, \phi_{k}(\psi(y;x))$.

\subsection{Derivation of a parametrized 1D problem in transverse direction}\label{1dproblem_bilFEM}
First, we introduce a subdivision $\tau_{h}$ of $\hat{\omega}$ with elements $\tau_{j} = (\hat{y}_{j-1},\hat{y}_{j})$ of width $h_{j} = \hat{y}_{j} - \hat{y}_{j-1}$ and maximal step size $h:= \max_{\tau_{j}}\, h_{j}$. Furthermore, we introduce an associated conforming FE space $Y^{h} \subset Y$ with $\dim(Y^{h}) =n_{h} < \infty$, and basis $\upsilon^{h}_{j}, \, j=1,...,n_{h}$.
Using the FE spaces $X^{H} := \{ w^{H} \in C^{0}(\Omega_{1D}) \, : \, w^{H}|_{\mathcal{T}_{i}} \in \mathbb{P}_{d}^{1}, \mathcal{T}_{i} \in  \mathcal{T}_{H}\} \subset X$ and $Y^{h} := \{ w^{h} \in C^{0}(\widehat{\omega}) \, : \, w^{h}|_{\tau_{j}} \in \mathbb{P}^{1}_{s}, \tau_{j} \in \tau_{h}\} \subset Y$, where, $\mathbb{P}^{1}_{k}$ denotes the set of polynomials of order $\leq k$ in one variable, we may consider the following reference FE approximation of the full problem \eqref{fullprob_gd}: Find $\mathcal{P}_{i}^{h}\in Y^{h}$, $i = 1,...,N_{H}$, such that 
\begin{equation}\label{truth_prob_gd}
\sum_{i = 1}^{N_{H}} a(\xi_{i}^{H} \mathcal{P}_{i}^{h},\xi_{k}^{H}\upsilon_{j}^{h}) = f(\xi_{k}^{H}\upsilon_{j}^{h}) - a(\mathfrak{h},\xi_{k}^{H}\upsilon_{j}^{h}) \quad k = 1,...,N_{H}, \, j = 1,...,n_{h},
\end{equation}
where 
\begin{equation*}
\mathcal{P}_{i}^{h}(\hat{y}) = \sum_{j = 1}^{n_{h}} p_{i,j} \upsilon^{h}_{j}(\hat{y}), \enspace i = 1,...,N_{H}
\end{equation*}
and we define $p^{H\times h}(x,\hat{y}) := \sum_{i=1}^{N_{H}} \sum_{j=1}^{n_{h}} p_{i,j} \xi_{i}^{H}(x)\upsilon_{j}^{h}(\hat{y})$.

Next, we introduce for an arbitrary integrand $t \in L^{1}(\widehat{\Omega})$ of an integral $I(t) := \int_{\widehat{\omega}} \int_{\Omega_{1D}} t(x,\hat{y})\,  dx d\hat{y}$ the quadrature formula
\begin{equation}\label{quad_formula_gd}
\bar{Q}(t) := \sum_{l = 1}^{\bar{Q}} \alpha_{l} \int_{\widehat{\omega}} \tilde{t}(x_{l}^{q},\hat{y}) \, \, d\hat{y}, \quad
\tilde{t}(x_{l}^{q},\hat{y}) := \underset{\varepsilon \rightarrow 0}{\lim} \frac{1}{|B_{\varepsilon}(x^{q}_{l})|} \int_{B_{\varepsilon}(x^{q}_{l})} t(x,\hat{y})\, dx,
\end{equation}
where $\alpha_{l}$, $l = 1,...,\bar{Q}$ are the weights, and $x^{q}_{l}$, $l = 1,...,\bar{Q}$ are the quadrature points.
Replacing $I(t)$ by $\bar{Q}(t)$ in the bilinear form $a( \cdot , \cdot)$ and the linear form $f(\cdot )$, we obtain the approximations $a^{\bar{q}}(\cdot, \cdot)$ and $f^{\bar{q}}(\cdot)$. The discrete problem with quadrature then reads: Find $\mathcal{P}_{i}^{h}\in Y^{h}$, $i = 1,...,N_{H}$, such that 
\begin{equation}\label{1D_prob_quad_gd}
\sum_{i = 1}^{N_{H}} a^{\bar{q}}(\xi_{i}^{H} \mathcal{P}_{i}^{h},\xi_{k}^{H}\upsilon_{j}^{h}) = f^{\bar{q}}(\xi_{k}^{H}\upsilon_{j}^{h}) -  a^{\bar{q}}(\mathfrak{h},\xi_{k}^{H}\upsilon_{j}^{h})\quad k = 1,...,N_{H}, \, j = 1,...,n_{h}. 
\end{equation}
\begin{figure}[t]
\centering
\includegraphics[scale = 0.4]{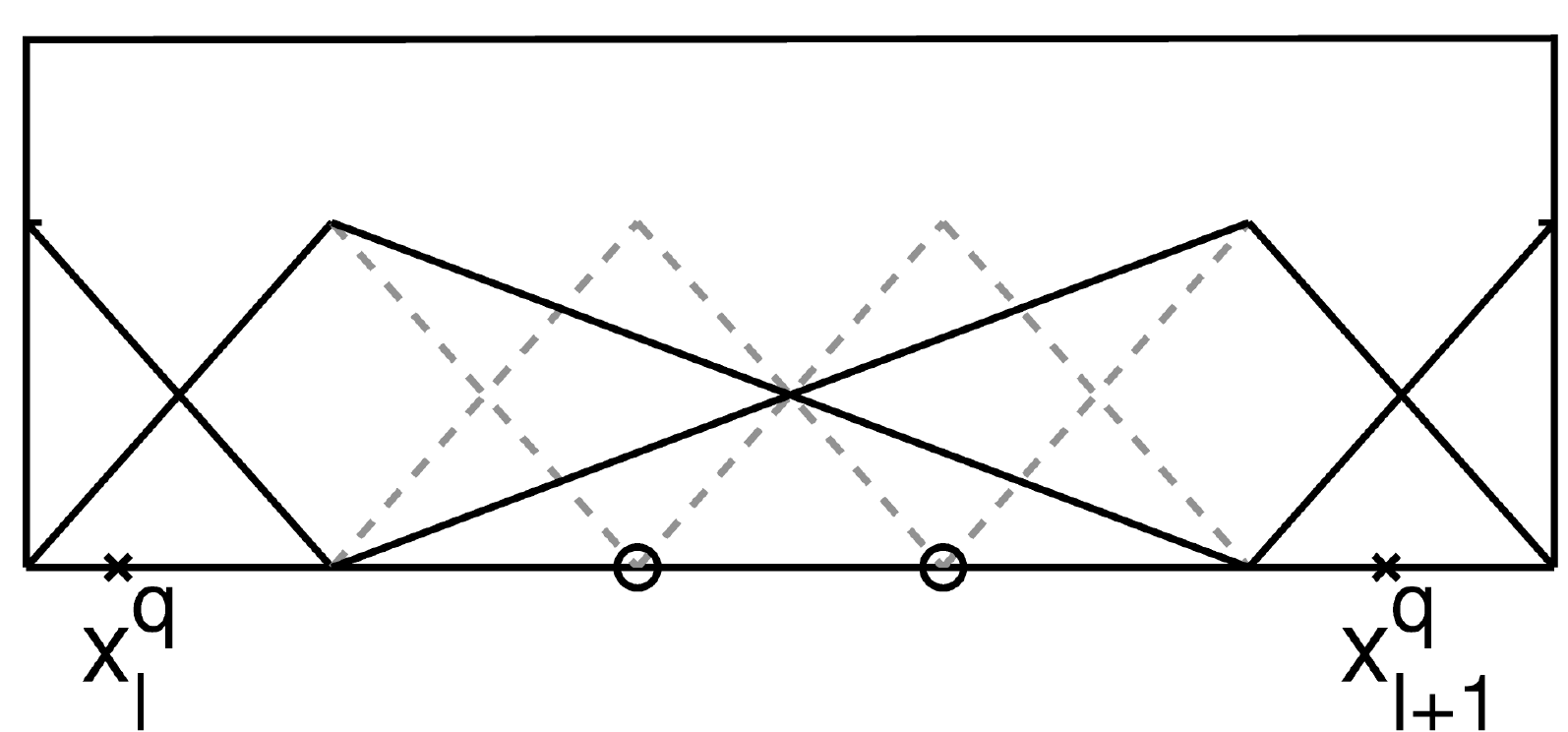}
\caption{Plot of the modified basis functions $\xi^{q}_{i}$ with $\supp (\xi_{i}^{q}) \cap \mu_{l} \neq \emptyset$, $l=1,...,\bar{Q}$, for a polynomial order $d=1$. The 'o' marks the removed nodes of the original triangulation $\mathcal{T}_{H}$. The original basis functions $\xi^{H}_{i}$ are plotted with a dashed line.\label{fig0}}
\end{figure}
Next, we parametrize \eqref{1D_prob_quad_gd} by introducing a parameter vector $\mu$ with entries $\mu_{l} =x^{q}_{l}$, $l = 1,...,\bar{Q}$, in order to find the optimal locations of the quadrature points by applying RB methods and thus to find the optimal points in $\Omega_{1D}$ for solving the lower-dimensional problem in transverse direction. The parameter domain $\mathcal{D}$ is defined as $\mathcal{D} := [\Omega_{1D}]^{\bar{Q}} \subset \mathbb{R}^{\bar{Q}}$. As solving \eqref{1D_prob_quad_gd} is for reasons of efficiency only feasible for small values of $\bar{Q}$, the dimension of $\mathcal{D}$ is limited to $\bar{Q} \ll N_{H}$. Therefore, we have in general $\supp(\xi_{i}^{H}) \cap \supp(\xi_{i'}^{H}) = \emptyset$  for functions $\xi_{i}^{H},\xi_{i'}^{H}  \in \chi^{H} := \{ \xi_{i}^{H} : \supp (\xi_{i}^{H}) \cap \mu_{l} \neq \emptyset, \, l = 1,...,\bar{Q}\}$. 

To introduce a coupling between the respective functions we first replace the functions $\xi_{i}^{H}$, $i = 1,...,N_{H}$, by basis functions $\xi_{i}^{q}$ associated with a new subdivision of $\Omega_{1D}$. The latter is obtained by deleting all nodes of $\mathcal{T}_{H}$ in the open intervals $(\lceil (x_{l}^{q}/H) \rceil H, \lfloor (x_{l+1}^{q}/H) \rfloor H)$, $l = 1,...,\bar{Q}$, as depicted in Fig.~\ref{fig0}. Here and henceforth we assume that the quadrature points are sorted in ascending order and that $x_{0} \geq 0$, where $x_{0}$ has been defined as the left interval boundary of $\Omega_{1D}$. $\lceil \cdot \rceil$ denotes the ceil and $\lfloor \cdot \rfloor$ the floor function. Moreover,  we enhance the set of quadrature points by the points $x_{\bar{Q}+l}^{q} := 0.5 (x_{l}^{q} + x_{l +1}^{q})$, $l = 1,...,\hat{Q}$, if  $\lfloor (x_{l+1}^{q}/H) \rfloor - \lfloor (x_{l}^{q}/H) \rfloor \geq 2$. 
Possible weights of the quadrature $\alpha_{l}$, $l=1,...,Q$ with $ Q = \bar{Q} + \hat{Q}$ are defined as 
\begin{align*}
\alpha_{l}:= \begin{cases} H \qquad & \text{if} \quad \lfloor \frac{x_{l-1}^{q}}{H} \rfloor \neq \lfloor \frac{x_{l}^{q}}{H} \rfloor \enspace \text{and} \enspace \lfloor \frac{x_{l}^{q}}{H} \rfloor \neq \lfloor \frac{x_{l +1}^{q}}{H} \rfloor ,\\[0.5ex]
\frac{x_{l}^{q} + x_{l+1}^{q}}{2} - \lfloor \frac{x_{l}^{q}}{H} \rfloor \quad & \text{if} \quad \lfloor \frac{x_{l-1}^{q}}{H} \rfloor \neq \lfloor \frac{x_{l}^{q}}{H} \rfloor , \\[0.5ex]
\lceil \frac{x_{l}^{q}}{H} \rceil - \frac{x_{l}^{q} + x_{l-1}^{q}}{2} \quad & \text{if} \quad \lfloor \frac{x_{l}^{q}}{H} \rfloor \neq \lfloor \frac{x_{l+1}^{q}}{H} \rfloor , \\[0.5ex]
\frac{x_{l+1}^{q} - x_{l-1}^{q}}{2} \quad & \text{else} .
\end{cases}
\end{align*}
This closes the description of the quadrature rule 
\begin{equation}\label{quad_formula_gd2}
Q(t) := \sum_{l = 1}^{Q} \alpha_{l} \int_{\widehat{\omega}} \tilde{t}(x_{l}^{q},\hat{y}) \, \, d\hat{y}.
\end{equation}
Using the quadrature formula $Q(t)$ \eqref{quad_formula_gd2} instead of $\bar{Q}(t)$ \eqref{quad_formula_gd} we obtain the following coupled system of parametrized 1D partial differential equations in the transverse direction: Given any $\mu \in \mathcal{D}$, find $\mathcal{P}_{i}^{h}(\mu) \in Y^{h}$, $i = 1,\hdots,|\chi^{q}|$, such that
\begin{align}\label{1D_prob_quad_para_gd}
a^{q}(\underset{x_{l}^{q} \in \supp (\xi_{i}^{q} )}{\sum_{\xi_{i}^{q}}} \xi_{i}^{q}(x_{l}^{q}) \mathcal{P}_{i}^{h}(\mu),\upsilon^{h}_{j}\, \xi_{k}^{q} \,;\mu) = f^{q}(\upsilon^{h}_{j}\, \xi_{k}^{q} \,;\mu) - a^{q}(\mathfrak{h},\upsilon^{h}_{j}\, \xi_{k}^{q} \,;\mu),
\end{align}
%\begin{align}
%\nonumber \text{Given any} \enspace \mu \in \mathcal{D}, \enspace 
%\text{find} \enspace \mathcal{P}^{h}_{l}(\mu_{l}) &:= \underset{\mu_{l} \in \supp (\xi_{i}^{q} )}{\sum_{\xi_{i}^{q}}} \xi_{i}^{q}(\mu_{l}) \mathcal{P}_{i}^{h} \in Y^{h},  l = 1,...,\bar{Q}, \\[-3ex]
%\label{1D_prob_quad_para_gd}\\
%\nonumber \text{such that} \enspace a^{q}(\mathcal{P}^{h}_{l}(\mu_{l}),\upsilon^{h}_{j}\, \xi_{k}^{q} \,;\mu) = f^{q}(\upsilon^{h}_{j}\, &\xi_{k}^{q} \,;\mu) - a^{q}(\mathfrak{h},\upsilon^{h}_{j}\, \xi_{k}^{q} \,;\mu) \enspace \text{for} \enspace j = 1,...,n_{h},\, \xi_{k}^{q} \in \chi^{q},
%\end{align}
for $j = 1,...,n_{h},\, \xi_{k}^{q} \in \chi^{q}$, where $\chi^{q} := \{ \xi_{i}^{q} : \supp (\xi_{i}^{q}) \cap \mu_{l} \neq \emptyset, \, l = 1,...,\bar{Q}\}$ and $x_{l}^{q}$, $l=1,\hdots,Q$ are the quadrature points of the quadrature formula $Q(t)$. Note that \eqref{1D_prob_quad_para_gd} is a coupled system of size $\leq 2d \bar{Q} n_{h} \times 2d \bar{Q} n_{h}$, where $d$ has been defined in the beginning of this subsection as the polynomial order of the FE space $X^{H}$. We emphasize that in contrast to \cite{OhlSme14} we are solving in \eqref{1D_prob_quad_para_gd} for the unknown parts of the solution in the dominant direction via the coefficient functions $\mathcal{P}_{i}^{h}(\hat{y};\mu)$ and do not consider them as part of the parameter.

Note that solving \eqref{1D_prob_quad_para_gd} without an artificial coupling is equivalent to solving $\bar{Q}$ coupled systems of size $2d n_{h} \times 2d n_{h}$. This may lead to rather limited variations in the solutions and hence the solution manifold, in which case the solution manifold does not contain all essential information of the full solution in transverse direction. We would hence expect a poor convergence behavior also for $\bar{Q}=1$ which is confirmed by the numerical experiments. In contrast, for $\bar{Q} \geq 2$  and the artificial coupling suggested above we include global information from the dominant direction and hence expect a very much improved convergence behavior, which is again confirmed by the numerical experiments. This stresses the importance of introducing an artificial coupling. 

Other choices of an artificial coupling are of course also possible. For instance one could only delete the nodes of $\mathcal{T}_{H}$ in the open intervals $((\lceil (x_{l}^{q}/H) \rceil + 1)H, (\lfloor (x_{l+1}^{q}/H) \rfloor -1 )H)$, $l = 1,...,\hat{Q}$ and thus keep the two nodes marked with a circle in Fig.~\ref{fig0}.  Then one could preserve the FE basis functions associated with the nodes  $\lceil (x_{l}^{q}/H) \rceil H$ and $\lfloor (x_{l+1}^{q}/H) \rfloor H$, and add an additional FE basis function(s), which couples the former. However, due to this additional FE basis function the size of system \eqref{1D_prob_quad_para_gd} would increase significantly --- at most by $\hat{Q}$. As it is in general only computationally feasible to solve system \eqref{1D_prob_quad_para_gd} if it is of small size, this (further) limits the number of quadrature points $\bar{Q}$ that can be chosen via RB methods. Since choosing as many quadrature points as possible via RB methods seems preferable as this yields nearly optimal chosen quadrature points, we suggest using the artificial coupling described above as in this case the size of system \eqref{1D_prob_quad_para_gd} only depends on $\bar{Q}$. How adding more quadrature points manually without adding more basis functions improves the approximation behavior of the proposed method is subject of future research. Note however that as soon as the quadrature rule is exact no further improvement can be realized without adding new basis functions.

\subsection{Example: An advection-diffusion problem}\label{example_3}
We exemplify the derivation of the coupled system of parametrized 1D partial differential equations for the model problem \eqref{fullprob_gd} with non-homogeneous Dirichlet boundary conditions on $\partial \Omega$. For the sake of clarity we restrict our exposition to a rectangular domain $\Omega$, implicating $\Omega = \widehat{\Omega}$ and $y = \hat{y}$. 
The full space $V$ thus coincides with $H^{1}_{0}(\Omega)$ and the spaces $X$ and $Y$ coincide with $H^{1}_{0}(\Omega_{1D})$ and $H^{1}_{0}(\widehat{\omega})$, respectively. 

By applying the quadrature formula defined in \eqref{quad_formula_gd}, we obtain the discrete problem with quadrature: Find $\mathcal{P}_{i}^{h} \in Y^{h}$, $i = 1,...,N_{H}$, such that 
\begin{subequations}
\begin{align*}
\nonumber & \qquad \qquad \sum_{i=1}^{N_{H}} \int_{\widehat{\omega}}\hspace{-2.5pt} \bar{\mathcal{A}}_{i,k}(y)\frac{d \mathcal{P}_{i}^{h}}{dy} \frac{d \upsilon_{j}^{h}}{dy} + \bar{\mathcal{B}}_{i,k}(y)\frac{d \mathcal{P}_{i}^{h}}{dy} \upsilon_{j}^{h} + \bar{\mathcal{C}}_{i,k}(y)\,\mathcal{P}_{i}^{h}\,\upsilon_{j}^{h}\, dy   \\
\nonumber &= \int_{\widehat{\omega}} \bar{\mathcal{F}}_{k}(y)\,\upsilon_{j}^{h}\, dy -  \int_{\widehat{\omega}} \bar{\mathcal{H}}_{1,k}(y)\,\frac{d \upsilon_{j}^{h}}{dy} + \bar{\mathcal{H}}_{2,k}(y)\,\upsilon_{j}^{h}\, dy \quad \text{for} \enspace j = 1,...,n_{h}, k = 1,...,N_{H}, \qquad \quad
\end{align*}
where the coefficients $\bar{\mathcal{A}}_{i,k}(y), \bar{\mathcal{B}}_{i,k}(y), \bar{\mathcal{C}}_{i,k}(y), \bar{\mathcal{F}}_{k}(y), \bar{\mathcal{H}}_{1,k}(y)$ and $\bar{\mathcal{H}}_{2,k}(y)$ are given by 
\begin{eqnarray*}
\nonumber \bar{\mathcal{A}}_{i,k}(y) &=&  \underset{l = 1}{\overset{\bar{Q}}{\sum}} \alpha_{l} k(x_{l}^{q},y) \xi_{i}^{H}(x_{l}^{q})\xi_{k}^{H}(x_{l}^{q}) ,\qquad
 \bar{\mathcal{B}}_{i,k}(y) = \underset{l = 1}{\overset{\bar{Q}}{\sum}} \alpha_{l} b_{2}(x_{l}^{q},y) \xi_{i}^{H}(x_{l}^{q})\xi_{k}^{H}(x_{l}^{q}), \\ 
\nonumber \bar{\mathcal{C}}_{i,k}(y) &=& \underset{l = 1}{\overset{\bar{Q}}{\sum}} \alpha_{l} k(x_{l}^{q},y) \partial_{x} \xi_{i}^{H}(x_{l}^{q}) \partial_{x} \xi_{k}^{H}(x_{l}^{q}) + b_{1}(x_{l}^{q},y) \partial_{x} \xi_{i}^{H}(x_{l}^{q})\xi_{k}^{H}(x_{l}^{q}), 
\end{eqnarray*}
\begin{eqnarray*}
\nonumber \bar{\mathcal{F}}_{k}(y) &=& \sum_{l = 1}^{\bar{Q}}\alpha_{l} F(x_{l}^{q},y) \xi_{k}^{H}(x_{l}^{q}), \qquad \bar{\mathcal{H}}_{1,k}(y) = \underset{l = 1}{\overset{\bar{Q}}{\sum}} \alpha_{l} k(x_{l}^{q},y)\partial_{y}\mathfrak{h}(x_{l}^{q},y) \xi_{k}^{H}(x_{l}^{q}), \\
\nonumber\bar{\mathcal{H}}_{2,k}(y) &=& \underset{l = 1}{\overset{\bar{Q}}{\sum}} \alpha_{l} k(x_{l}^{q},y)\partial_{x}\mathfrak{h}(x_{l}^{q},y) \partial_{x}\xi_{k}^{H}(x_{l}^{q}) 
+ (b_{1}(x_{l}^{q},y)\partial_{x}\mathfrak{h}(x_{l}^{q},y) + b_{2}(x_{l}^{q},y)\partial_{y}\mathfrak{h}(x_{l}^{q},y)) \xi_{k}^{H}(x_{l}^{q}).
\end{eqnarray*}
\end{subequations}
Here we have omitted the $\backsim$ on the integrands (cf. \eqref{quad_formula_gd}) to simplify notations. Using the artificial coupling introduced in the previous subsection and the associated quadrature formula \eqref{quad_formula_gd2} we obtain the parametrized coupled 1D PDE in transverse direction: Given any $\mu \in \mathcal{D}$, find $\mathcal{P}^{h}_{i}(\mu) \in Y^{h}$, $i=1,\hdots,|\chi^{q}|$ such that
\begin{subequations}
\begin{align*}
\nonumber &\sum_{l=1}^{Q} \Bigg [ \int_{\widehat{\omega}}\hspace{-2.5pt} \mathcal{A}_{k}^{l}(y;\mu) \Bigg (\underset{x_{l}^{q} \in \supp (\xi_{i}^{q} )}{\sum_{\xi_{i}^{q}}} \xi_{i}^{q}(x_{l}^{q}) \frac{d \mathcal{P}_{i}^{h}(\mu)}{dy}\Bigg) \frac{d \upsilon_{j}^{h}}{dy} + \mathcal{B}_{k}^{l}(y;\mu)\Bigg(\underset{x_{l}^{q} \in \supp (\xi_{i}^{q} )}{\sum_{\xi_{i}^{q}}} \xi_{i}^{q}(x_{l}^{q}) \frac{d \mathcal{P}_{i}^{h}(\mu)}{dy}\Bigg)  \upsilon_{j}^{h}\\
& \qquad\qquad + \mathcal{C}_{k}^{l}(y;\mu)\,\Bigg(\underset{x_{l}^{q} \in \supp (\xi_{i}^{q} )}{\sum_{\xi_{i}^{q}}} \partial_{x}\xi_{i}^{q}(x_{l}^{q}) \mathcal{P}_{i}^{h}(\mu)\Bigg) \,\upsilon_{j}^{h}\, dy  \Bigg ]  \\
&= \int_{\widehat{\omega}} \mathcal{F}_{k}(y;\mu)\,\upsilon_{j}^{h}\, dy  -  \int_{\widehat{\omega}} \mathcal{H}_{1,k}(y;\mu)\,\frac{d \upsilon_{j}^{h}}{dy} + \mathcal{H}_{2,k}(y;\mu)\,\upsilon_{j}^{h}\, dy \qquad \text{for} \enspace j = 1,...,n_{h},
\end{align*}
where for all $\xi_{k}^{q} \in \chi^{q}$ the coefficients $\mathcal{A}_{k}^{l}(y;\mu), \mathcal{B}_{k}^{l}(y;\mu), \mathcal{C}_{k}^{l}(y;\mu), \mathcal{F}_{k}(y;\mu), \mathcal{H}_{1,k}(y;\mu)$ and $\mathcal{H}_{2,k}(y;\mu)$ are given by 
\begin{eqnarray*}
\nonumber \mathcal{A}_{k}^{l}(y;\mu)&=&  \alpha_{l} k(x_{l}^{q},y) \xi_{k}^{q}(x_{l}^{q}) ,\qquad
 \mathcal{B}_{k}^{l}(y;\mu) = \alpha_{l} b_{2}(x_{l}^{q},y) \xi_{k}^{q}(x_{l}^{q}), \\ [1.5ex]
\nonumber \mathcal{C}_{k}^{l}(y;\mu)&=& \alpha_{l} k(x_{l}^{q},y) \partial_{x} \xi_{k}^{q}(x_{l}^{q}) + b_{1}(x_{l}^{q},y) \xi_{k}^{q}(x_{l}^{q}), \\
\nonumber \mathcal{F}_{k}(y;\mu) &=& \underset{l = 1}{\overset{Q
}{\sum}} \alpha_{l} F(x_{l}^{q},y) \xi_{k}^{q}(x_{l}^{q}), \qquad \mathcal{H}_{1,k}(y;\mu) = \underset{l = 1}{\overset{Q}{\sum}} \alpha_{l} k(x_{l}^{q},y)\partial_{y}\mathfrak{h}(x_{l}^{q},y) \xi_{k}^{q}(x_{l}^{q}), \\
\nonumber \mathcal{H}_{2,k}(y;\mu)&=& \underset{l = 1}{\overset{Q}{\sum}} \alpha_{l} k(x_{l}^{q},y)\partial_{x}\mathfrak{h}(x_{l}^{q},y) \partial_{x}\xi_{k}^{q}(x_{l}^{q}) 
+ (b_{1}(x_{l}^{q},y)\partial_{x}\mathfrak{h}(x_{l}^{q},y) + b_{2}(x_{l}^{q},y)\partial_{y}\mathfrak{h}(x_{l}^{q},y)) \xi_{k}^{q}(x_{l}^{q}).
\end{eqnarray*}
\end{subequations}

\subsection{Reduced basis generation --- the \textsc{Adaptive-RB-HMR} algorithm}\label{adapt-RB-HMR}
In this subsection we briefly summarize the \textsc{Adaptive-RB-HMR} algorithm introduced in \cite{OhlSme14} which
constructs the reduction space $Y_{m}=\spanlin \{\phi_{1}, \dots, \phi_{m}\} \subset Y^{h}$ using RB sampling techniques and comment on necessary modifications due to the different parametrized 1D problem. 

\begin{algorithm}[t]
\caption{Adaptive training set extension and snapshot generation }\label{adapt-para}
\textsc{AdaptiveTrainExtension}$(G_{0},\Xi_{G_{0}},m_{\mbox{{\scriptsize{max}}}},i_{max},n_{\Xi},\theta,\sigma_{thres},N_{H'})$\\
\textbf{Initialize}  $G = G_{0}, \Xi_{G} = \Xi_{G_{0}}, \phi_{0} = \emptyset, \rho_{0}(G) = 0$\\
\For{$m=1:m_{\mbox{{\scriptsize max}}}$}{
Compute $\mathcal{ P}_{G}^{h}$\\
$[\eta(G), \sigma(G)]= $ \textsc{ElementIndicators}$(\{\phi_{k}\}^{m-1}_{k=1},\mathcal{ P}_{G}^{h},G,\rho(G),N_{H'})$\\
\For{i = 1:$i_{max}$}{
$\mathcal{ G} :=$ \textsc{Mark}$(\eta(G),\sigma(G),\theta,\sigma_{thres})$\\
$(G,\Xi_{G}):=$ \textsc{Refine}$(\mathcal{ G},\Xi_{\mathcal{ G}},n_{\Xi})$\\
$\rho(G\setminus\mathcal{ G}) = \rho(G\setminus\mathcal{ G}) + 1$\\
Compute $\mathcal{ P}_{\mathcal{ G}}^{h}$\\
$[\eta(\mathcal{ G}),\rho(\mathcal{ G}),\sigma(\mathcal{ G})] = $ \textsc{ElementIndicators}$(\{\phi_{k}\}^{m-1}_{k=1},\mathcal{ P}_{\mathcal{ G}}^{h},N_{H'})$ }
 $\{\phi_{k}\}^{m}_{k=1}:= \mbox{POD}(\mathcal{ P}_{G}^{h},m)$\\}
 \Return $\mathcal{ P}_{G}^{h}, \Xi_{G}$
\end{algorithm}
First, the discrete snapshot set
\begin{equation}\label{disc_mani}
\mathcal{M}_{\Xi} := \lbrace \mathcal{P}^{h}(\mu)\, |\, \mu \in  \Xi \rbrace \subset \mathcal{M}, \quad \text{for} \quad \mathcal{M} := \lbrace \mathcal{P}^{h}(\mu)\, |\, \mu \in \mathcal{D} \rbrace,
\end{equation}
and a discrete training set $\Xi \subset \mathcal{D}$ is efficiently constructed in Algorithm \ref{adapt-para} by an adaptive training set extension similar to the one considered in \cite{HaaOhl2008b,HaaDihOhl2011}: Let $G \subset \mathcal{D}$ denote a hyper-rectangular possibly non-conforming grid, $g$ a cell of $G$ and $N_{G}$ the number of cells in $G$. We assume that the parameter values in the training set $\Xi_{g}$ are sampled from the uniform distribution over the cell $g$, where the sample size $n_{\Xi}$ of $\Xi_{g}$ shall be identical for all cells $g$ and $\Xi_{G} = \cup_{g \in G} \Xi_{g}$. We apply a
$\mbox{SOLVE} \rightarrow \mbox{ESTIMATE} \rightarrow \mbox{MARK} \rightarrow \mbox{REFINE}$
strategy to adaptively refine $G$ and construct $\Xi_{G}$ beginning with a given coarse mesh $G_{0}$ and an associated initial training set $\Xi_{G_{0}}$. To estimate the error between the discrete RB-HMR solution $p_{m}^{H}$ of \eqref{red_prob_gd} and the full dimensional reference solution $p^{H\times h}$ of \eqref{truth_prob_gd} we employ the error estimator $\Delta_{m}$ proposed in \cite{OhlSme14} and recalled in the next subsection \S \ref{a posteriori}. We may then define cell indicators  $\eta(g) := \min_{\mu \in \Xi_{g}} \Delta_{m}(\mu)$ and $\sigma(g) := \diam(g)\cdot \rho(g)$, where $\rho(g)$ counts the number of loops in which the cell $g$ has not been refined, since its last refinement. We mark for fixed $\theta \in (0,1]$ in each iteration the $\theta N_{G}$ cells $g$ with the smallest indicators $\eta(g)$ and additionally the cells for which $\sigma(g)$ lies above a certain threshold $\sigma_{thres}$. Afterwards all cells marked for refinement are bisected in each direction. Note that in actual practice we use a coarser space $X^{H'}$ of dimension $N_{H'}\ll N_{H}$ in the dominant direction for the computation of the error indicator $\eta(g)$ \cite{OhlSme14}. 

Subsequently, we define the reduction space $Y_{m}$ as the principal components of $\mathcal{M}_{\Xi_{G}}$ determined by a POD in Algorithm \ref{adapt-HMR-POD} \textsc{Adaptive-RB-HMR}.\\

We emphasize that in contrast to  \cite{OhlSme14} we obtain $\bar{Q}$ snapshots $\mathcal{P}^{h}(\mu)$ per parameter vector $\mu = (\mu_{1},...,\mu_{\bar{Q}})$ --- one for each component. As a consequence one has to slightly modify Algorithm \ref{adapt-para} \textsc{AdaptiveParameterRefinement} and Algorithm \ref{adapt-HMR-POD} \textsc{Adaptive-RB-HMR} introduced in \cite{OhlSme14}.  First, the error indicators $\eta(G)$ and $\sigma(G)$ in Algorithm \ref{adapt-para} have to be computed $\bar{Q} n_{\Xi,G}$ times, where $n_{\Xi,G}$ denotes the sample size of $\Xi_{G}$. However, in return the training set can be reduced significantly. The \textsc{Mark} and \textsc{Refine} strategies are maintained. 
For the application of the POD in Algorithm \ref{adapt-HMR-POD}  $\mathcal{O}(\bar{Q}^{2}n_{\Xi,G}^{2}n_{h})$ operations for the assembling of the correlation matrix and $\mathcal{O}(\bar{Q}^{3}n_{\Xi,G}^{3})$ operations for the solution of the eigenvalue problem are required. The higher costs for the computation of the snapshots $\mathcal{P}^{h}(\mu)$ are still dominated by the costs for the computation of the error estimator. Overall, we do not expect a significant effect on the computational costs of the RB-HMR approach by changing the parametrized 1D problem due to the trade-off between the factor $\bar{Q}$ and the smaller sample size $n_{\Xi,G}$.

\begin{algorithm}[t]
\caption{Construction of the reduction space $Y^{h}_{m}$}\label{adapt-HMR-POD}
\textsc{Adaptive-HMR-RB}$(G_{0},m_{\mbox{{\scriptsize{max}}}},i_{max},n_{\Xi},\theta,\sigma_{thres},N_{H'},\varepsilon_{\mbox{{\scriptsize tol}}})$\\
\textbf{Initialize} $\Xi_{G_{0}}$\\
$[\mathcal{ P}_{G}^{h} , \Xi_{G}]=$ \textsc{AdaptiveTrainExtension}$(G_{0},\Xi_{G_{0}}, m_{\mbox{{\scriptsize{max}}}},i_{max},n_{\Xi},\theta,\sigma_{thres},N_{H'})$\\
$Y_{m}:= \mbox{POD}(\mathcal{ P}_{G}^{h},\varepsilon_{\mbox{{\scriptsize tol}}})$, such that $e^{\mbox{{\tiny POD}}}_{m}\leq\varepsilon_{\mbox{{\scriptsize tol}}}$.\\
\Return $Y_{m}$
\end{algorithm}

\subsection{A posteriori error estimation}\label{a posteriori}
For the sake of completeness we recall in this subsection the a posteriori error estimator $\Delta_{m}$ introduced in \cite{OhlSme14}. To this end we introduce a partition $\widehat{T} := \mathcal{ T}_{H} \times \tau_{h}$ of $\widehat{\Omega}$ with elements $T_{i,j} := \mathcal{ T}_{i} \times \tau_{j}$, where $ \mathcal{ T}_{i} \in \mathcal{ T}_{H}$ and $\tau_{j} \in \tau_{h}$. Moreover, we define the conforming Tensor Product FE space 
\begin{equation*}
V^{H\times h} := \left\{ v^{H \times h} \in C^{0}(\widehat{\Omega}) \, \mid \, v^{H \times h}|_{T_{i,j}} \in \mathbb{Q}_{k,l}, T_{i,j} \in \widehat{T} \right\} \subset V,
\end{equation*}
where $
\mathbb{Q}_{k,l}:= \{ \sum_{j} c_{j} v_{j}(x)w_{j}(\hat{y}) \enspace : \enspace v_{j} \in \mathbb{P}^{1}_{k}, w_{j} \in \mathbb{P}^{1}_{l}  \}.
$ 
We may then introduce a Riesz representative $\mathcal{ R}_{m}^{H \times h} \in V^{H\times h}$ as the solution of 
$
(\mathcal{ R}_{m}^{H \times h}, v^{H \times h})_{V} = f(v^{H \times h}) - a(p_{m}^{H},v^{H \times h})$ $\forall v^{H \times h} \in V^{H \times h},
$
where the $V$-inner product has been defined in \S \ref{remove_interface}.
\begin{proposition}[A posteriori error bound]
The error estimator $\Delta_{m}$ defined as
\begin{eqnarray*}
\Delta_{m} &:=& \|\mathcal{ R}_{m}^{H\times h}\|_{V} / {c_{0}} \\
\text{satisfies} \qquad \| p^{H\times h} - p_{m}^{H} \|_{V} &\leq & \Delta_{m} \leq \frac{c_{1}}{c_{0}}\| p^{H\times h} - p_{m}^{H} \|_{V},
\end{eqnarray*}
where $c_{0}$ and $c_{1}$ have been defined in \S \ref{remove_interface}.
\end{proposition}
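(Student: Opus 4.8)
The plan is to run the standard residual-based a posteriori argument, in which the Riesz representative $\mathcal{R}_{m}^{H\times h}$ plays the role of the $V$-Riesz lift of the residual of the reduced solution. The starting observation is that both the reference solution $p^{H\times h}$ of \eqref{truth_prob_gd} and the reduced solution $p_{m}^{H}$ of \eqref{prob_gd} lie in the tensor product FE space $V^{H\times h}$, so that the error $e := p^{H\times h} - p_{m}^{H}$ is itself an admissible test function in $V^{H\times h}$. Subtracting the defining identity of $\mathcal{R}_{m}^{H\times h}$ from the reference problem \eqref{truth_prob_gd} (whose right-hand side supplies the data, including the lifting contribution $a(\mathfrak{h},\cdot)$), one obtains the residual identity
\[
(\mathcal{R}_{m}^{H\times h}, v^{H\times h})_{V} = a(e, v^{H\times h}) \qquad \text{for all } v^{H\times h}\in V^{H\times h}.
\]
Everything then follows by testing this identity with the two natural choices $v^{H\times h}=e$ and $v^{H\times h}=\mathcal{R}_{m}^{H\times h}$.

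For the reliability bound $\|e\|_{V}\le \Delta_{m}$, I would first choose $v^{H\times h}=e$ in the residual identity, giving $(\mathcal{R}_{m}^{H\times h}, e)_{V} = a(e,e)$. Since $e\in V^{H\times h}\subseteq V$, the coercivity estimate $a(e,e)\ge c_{0}\|e\|_{V}^{2}$ applies, while the Cauchy--Schwarz inequality in the $V$-inner product yields $(\mathcal{R}_{m}^{H\times h}, e)_{V} \le \|\mathcal{R}_{m}^{H\times h}\|_{V}\|e\|_{V}$. Chaining these and dividing by $\|e\|_{V}$ (the case $e=0$ being trivial) gives $c_{0}\|e\|_{V}\le \|\mathcal{R}_{m}^{H\times h}\|_{V}$, i.e. $\|e\|_{V}\le \|\mathcal{R}_{m}^{H\times h}\|_{V}/c_{0}=\Delta_{m}$.

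For the efficiency bound $\Delta_{m}\le (c_{1}/c_{0})\|e\|_{V}$, I would instead test with $v^{H\times h}=\mathcal{R}_{m}^{H\times h}$, which is legitimate because $\mathcal{R}_{m}^{H\times h}\in V^{H\times h}$ by construction. This gives $\|\mathcal{R}_{m}^{H\times h}\|_{V}^{2} = (\mathcal{R}_{m}^{H\times h},\mathcal{R}_{m}^{H\times h})_{V} = a(e,\mathcal{R}_{m}^{H\times h})$, and the continuity estimate $a(e,\mathcal{R}_{m}^{H\times h})\le c_{1}\|e\|_{V}\|\mathcal{R}_{m}^{H\times h}\|_{V}$ then yields $\|\mathcal{R}_{m}^{H\times h}\|_{V}\le c_{1}\|e\|_{V}$ after dividing by $\|\mathcal{R}_{m}^{H\times h}\|_{V}$. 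Dividing by $c_{0}$ produces $\Delta_{m}\le (c_{1}/c_{0})\|e\|_{V}$, which is the claimed upper bound.

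The routine parts are the two applications of Cauchy--Schwarz together with coercivity and continuity; the single point that genuinely has to be verified is the conformity $V_{m}^{H}\subseteq V^{H\times h}$, i.e. that $p_{m}^{H}$ (and hence $e$) really is an element of the tensor product FE space so that it may serve as a test function. This holds in the rectangular setting of \S\ref{example_3}, where $\psi=\mathrm{id}$ and each summand $\overline{p}_{k}^{H}(x)\phi_{k}(y)$ with $\overline{p}_{k}^{H}\in X^{H}$, $\phi_{k}\in Y_{m}\subseteq Y^{h}$ lies in $V^{H\times h}$; for a general fiber bundle geometry one must additionally ensure that composing $\phi_{k}$ with $\psi(\cdot;x)$ keeps $p_{m}^{H}$ inside $V^{H\times h}$. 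A secondary bookkeeping point is to make sure the residual identity uses exactly the right-hand side of the reference problem \eqref{truth_prob_gd}, so that the lifting term $a(\mathfrak{h},\cdot)$ cancels correctly and $\mathcal{R}_{m}^{H\times h}$ indeed represents $a(e,\cdot)$ rather than a perturbed functional.
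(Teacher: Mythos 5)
Your proof is correct and is exactly the standard residual-based RB argument that the paper invokes by citation (the paper gives no proof of its own, referring instead to the RB literature, e.g.\ \cite{RoHuPa08}): establish the residual identity, test with the error for reliability via coercivity and Cauchy--Schwarz, and test with the Riesz representative for efficiency via continuity. Your closing bookkeeping remark is well taken --- as printed, the paper's definition of $\mathcal{R}_{m}^{H\times h}$ omits the term $-a(\mathfrak{h},v^{H\times h})$ appearing on the right-hand side of \eqref{truth_prob_gd}, and it must be included for the identity $(\mathcal{R}_{m}^{H\times h},v^{H\times h})_{V}=a(p^{H\times h}-p_{m}^{H},v^{H\times h})$ to hold.
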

\begin{proof}
We refer to the RB literature for the proof of this standard result (see e.g. \cite{RoHuPa08}).
\end{proof}

\section{Numerical Experiments}\label{numerics_gd}

In this section we demonstrate in several numerical experiments the capacity of the ansatz proposed in \S \ref{ansatz_interface} to improve the convergence behavior for tensor-based model reduction approaches using the example of the RB-HMR method. 
First, in \S \ref{subsect:locate interface}, we demonstrate that by applying the procedure suggested in \S \ref{locate_interface} we are indeed able to approximate the location  of the interface very well. Subsequently in \S \ref{subsect:remove interface} we compare the convergence behavior of the model error of the RB-HMR approach in case we include information on the interface with the approximation behavior if we use an arbitrary lifting function of the Dirichlet boundary conditions for three different test cases. In the first test case, the full solution $p$ of \eqref{fullprob_gd} is chosen as a multiple of the solution of test case 1 in \cite{OhlSme14} to have a benchmark for the convergence rate. We see a considerable improvement of the convergence behavior of the model error. An even more substantial improvement can be observed for the second test case, where the solution exhibits more complex structures and little spatial regularity due to a discontinuous source term. While we include in test case 1 and 2 the exact interface, we use in test case 3 only an approximation of the interface as the lifting function of the Dirichlet boundary conditions and still observe a significant improvement of the convergence behavior of the RB-HMR approach. Unless otherwise stated we have used $\bar{Q} = 2$ in \eqref{1D_prob_quad_para_gd} in the numerical tests. Moreover, in all three test cases we have used linear FE in $x$- and $y$-direction, that means
$ 	
X^{H}= \left \{ v^{H} \in C^{0}(\Omega_{1D}) \,:\, v^{H}|_{\mathcal{T}_{i}} \in \mathbb{P}^{1}_{1}(\mathcal{T}_{i}), \mathcal{T}_{i} \in \mathcal{T}_{H}\right \}, 
Y^{h} = \left \{ v^{h} \in C^{0}(\widehat{\omega}) \,:\, v^{h}|_{\tau_{j}} \in \mathbb{P}^{1}_{1}(\tau_{j}), \tau_{j} \in \tau_{h}\right \},
$
and
$
V^{H\times h} =  \{ v^{H \times h} \in C^{0}(\widehat{\Omega}) \, : \, v^{H \times h}|_{T_{i,j}} \in \mathbb{Q}_{1,1}, T_{i,j} \in \hat{T} \}.
$
We define the relative model error in the $V$- or $L^{2}$-norm as $\|e_{m}\|_{V}^{rel} := \|e_{m}\|_{V}/\|p^{H \times h}\|_{V}$, or $\|e_{m}\|_{L^{2}(\Omega)}^{rel} := \|e_{m}\|_{L^{2}(\Omega)}/\|p^{H \times h}\|_{L^{2}(\Omega)}$, respectively, for $e_{m} = p^{H \times h} - p^{H}_{m}$. Finally, we have used equidistant grids in $x$- and $y$-direction for all computations in this section.
\begin{figure}[h!]
\center
\subfloat[{\footnotesize $\mathbf{b}=(0,0)^{T}$, $\partial_{y} F$} ]{\includegraphics[scale=0.3]{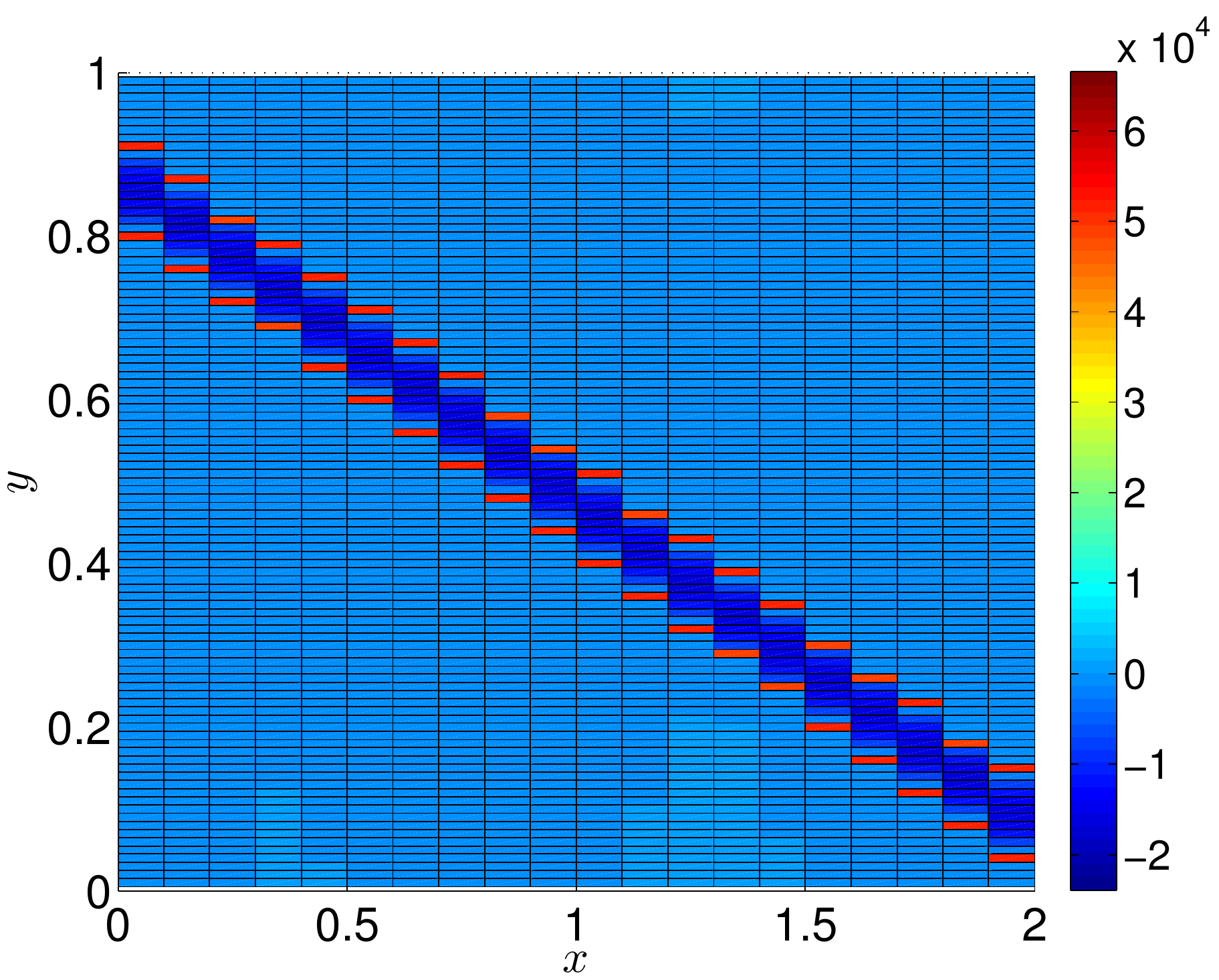} \label{without_advection_f}}\qquad
\subfloat[{\footnotesize $\mathbf{b}=(0,0)^{T}$, $\tilde{p}$} ]{\includegraphics[scale=0.3]{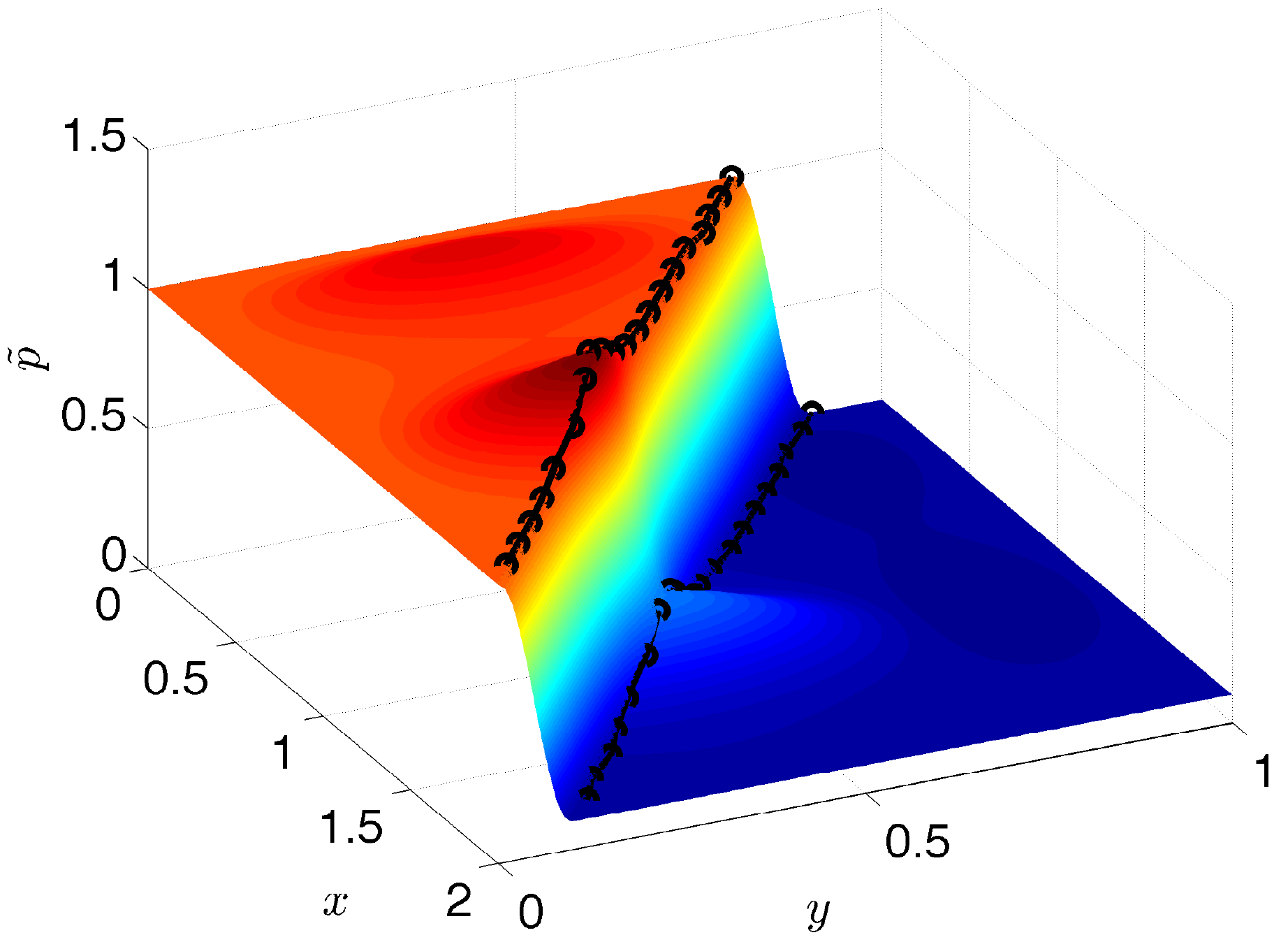} \label{without_advection}}\\
\subfloat[{\footnotesize $\mathbf{b}=(100,0)^{T}$, $\partial_{y} F$} ]{\includegraphics[scale=0.3]{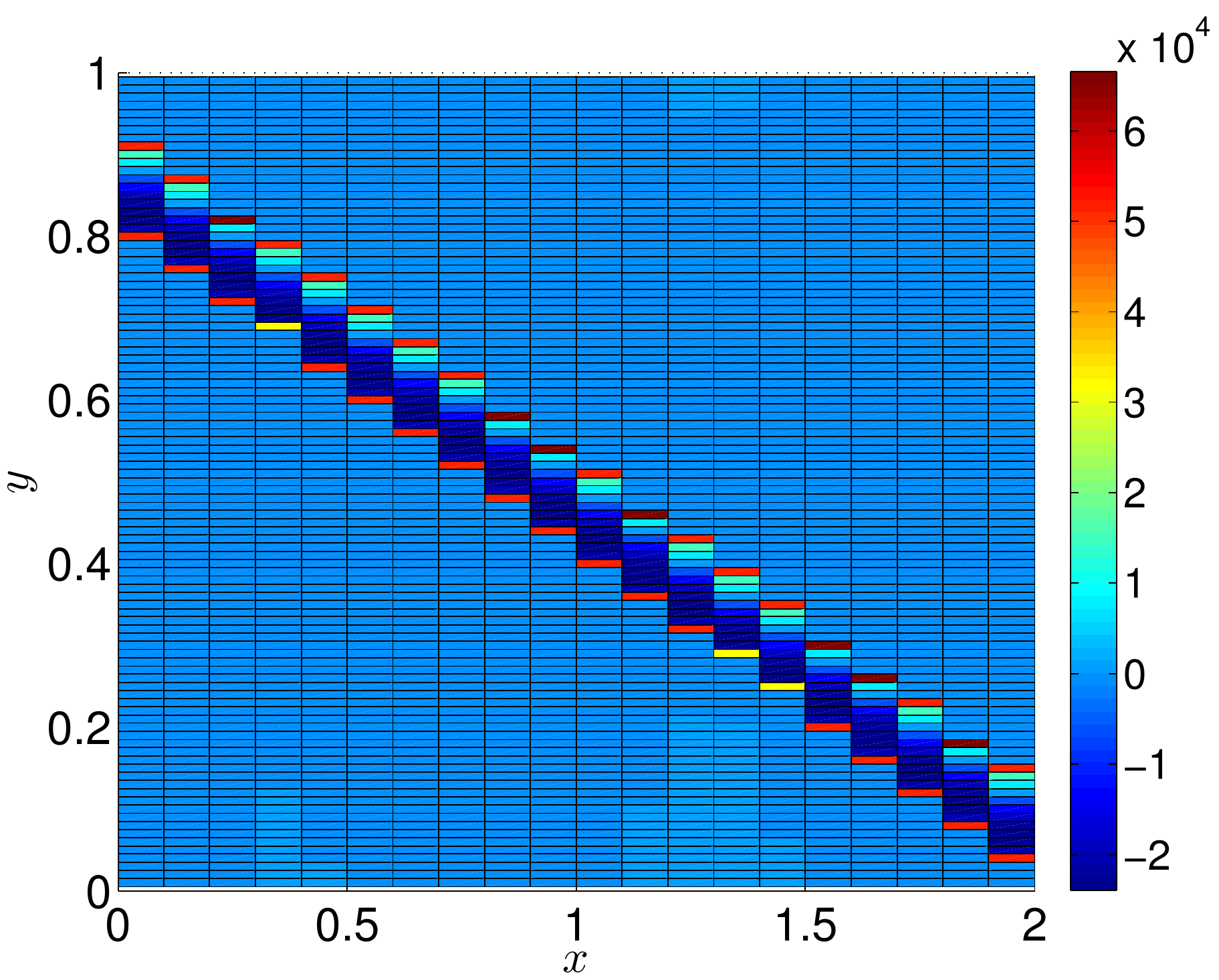} \label{with_advection_f}}\qquad
\subfloat[{\footnotesize $\mathbf{b}=(100,0)^{T}$, $\tilde{p}$} ]{\includegraphics[scale=0.3]{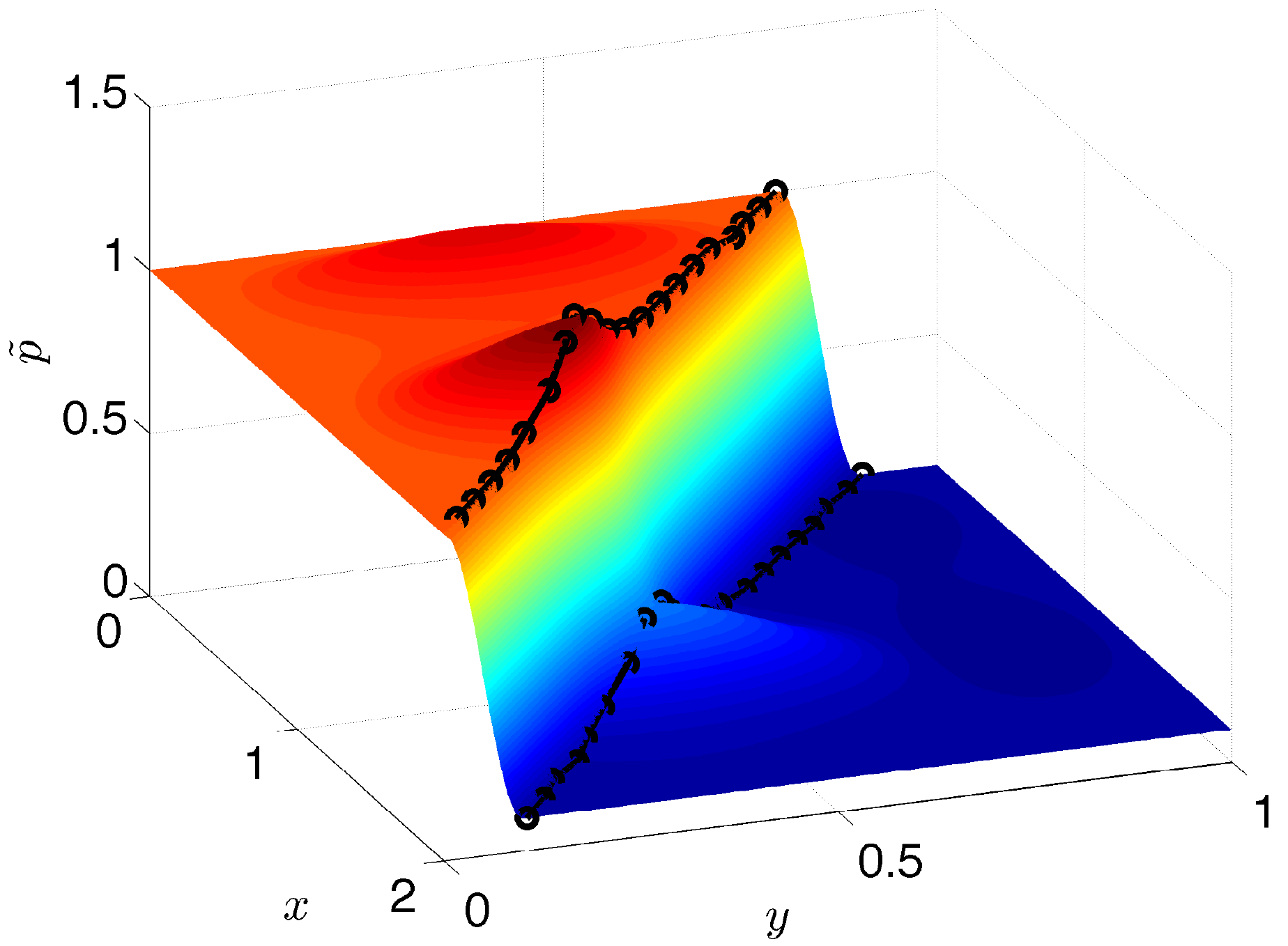} \label{with_advection}}\\
\caption{Discrete derivative in $y$-direction of $F$ for model problem \eqref{fullprob_gd} with $\mathbf{b}=(0,0)^{T}$ (a) and $\mathbf{b}=(100,0)^{T}$ (c) and $\tilde{p}$ with the interpolant resulting from the procedure introduced in \S \ref{locate_interface} for $\mathbf{b}=(0,0)^{T}$ (b) and $\mathbf{b}=(100,0)^{T}$ (d). \label{p_with_h}}
\end{figure}

\subsection{Locating the interface}\label{subsect:locate interface}

We consider a numerical example with an exact solution $\tilde{p} = p + \mathfrak{h}$ on $\Omega$, where the interface $\mathfrak{h}$ and $p$ are given as
\begin{align}
\label{test1_interface}\mathfrak{h}(x,y) &= \begin{cases}
                        1 \quad &\text{if} \enspace y + 0.4x < 0.8,\\
                        0.1 \quad & \text{if} \enspace y + 0.4x > 0.9,\\
                        0.55 + 0.45\cos(10\pi (y+0.4x-0.8)) &\text{if} \enspace 0.8 \leq y+0.4x\leq 0.9
                        \end{cases},\\
\label{test1_interfaceb} p(x,y) &= 5y^{2}(1-y)^{2}(0.75-y)x(2-x)\exp(\sin(2\pi x)). \qquad \qquad
\end{align}
First, we consider $k=1.0$ and $\mathbf{b}=(0,0)^{T}$ in \eqref{fullprob_gd}. We introduce partitions $\mathcal{T}_{H'}$ and $\tau_{h}$ with $N_{H'}=20$ and $n_{h}=100$ elements, respectively. The discrete derivative of $F$ in $y$-direction is depicted in Fig.~\ref{without_advection_f}. As suggested in \S \ref{locate_interface} we first identified the two elements in $y$-direction which exhibit the maximal value of the derivative for each grid point in $x$-direction. Subsequently, we interpolated between the midpoints of those elements. The resulting interpolant is plotted in Fig.~\ref{without_advection} in black; however, we have adjusted the values of the interpolant from the derivative of $F$ to $\tilde{p}$ such that it can be better compared with $\tilde{p}$. Recall to that end that we can infer the height of the interface approximately from the prescribed Dirichlet boundary conditions. In Fig.~\ref{without_advection} it can be observed that the procedure suggested in \S \ref{locate_interface} produces a very accurate approximation of the ``boundaries'' of the interface. 

Next, we consider $\tilde{p}$ as in the preceding example, $k=1.0$, and $\mathbf{b}=(100,0)^{T}$ and therefore a rather strong advective field. The function $F$ is thus chosen here as $F =-\Delta \tilde{p} + b \nabla \tilde{p}$. We use the same partitions $\mathcal{T}_{H'}$ and $\tau_{h}$ as above. If we compare the discrete derivative of $F$ in $y$-direction depicted in Fig.~\ref{with_advection_f} with the one of the previous example in Fig.~\ref{without_advection_f} we observe some (minor) changes due to the strong advective field but it is clearly observable that for each grid point in $x$-direction the two elements in $y$-direction with the maximal values of the discrete derivative are the same. Comparing in Fig.~\ref{with_advection}  the interpolant resulting from the discrete derivative in $y$-direction of $F$ for $\mathbf{b}=(100,0)^{T}$ with the solution $\tilde{p}$ we observe that also in this case the procedure proposed in \S \ref{locate_interface} yields a very good approximation of the ``boundaries'' of the interface. 

\begin{figure}[t]
\center
\includegraphics[scale=0.55]{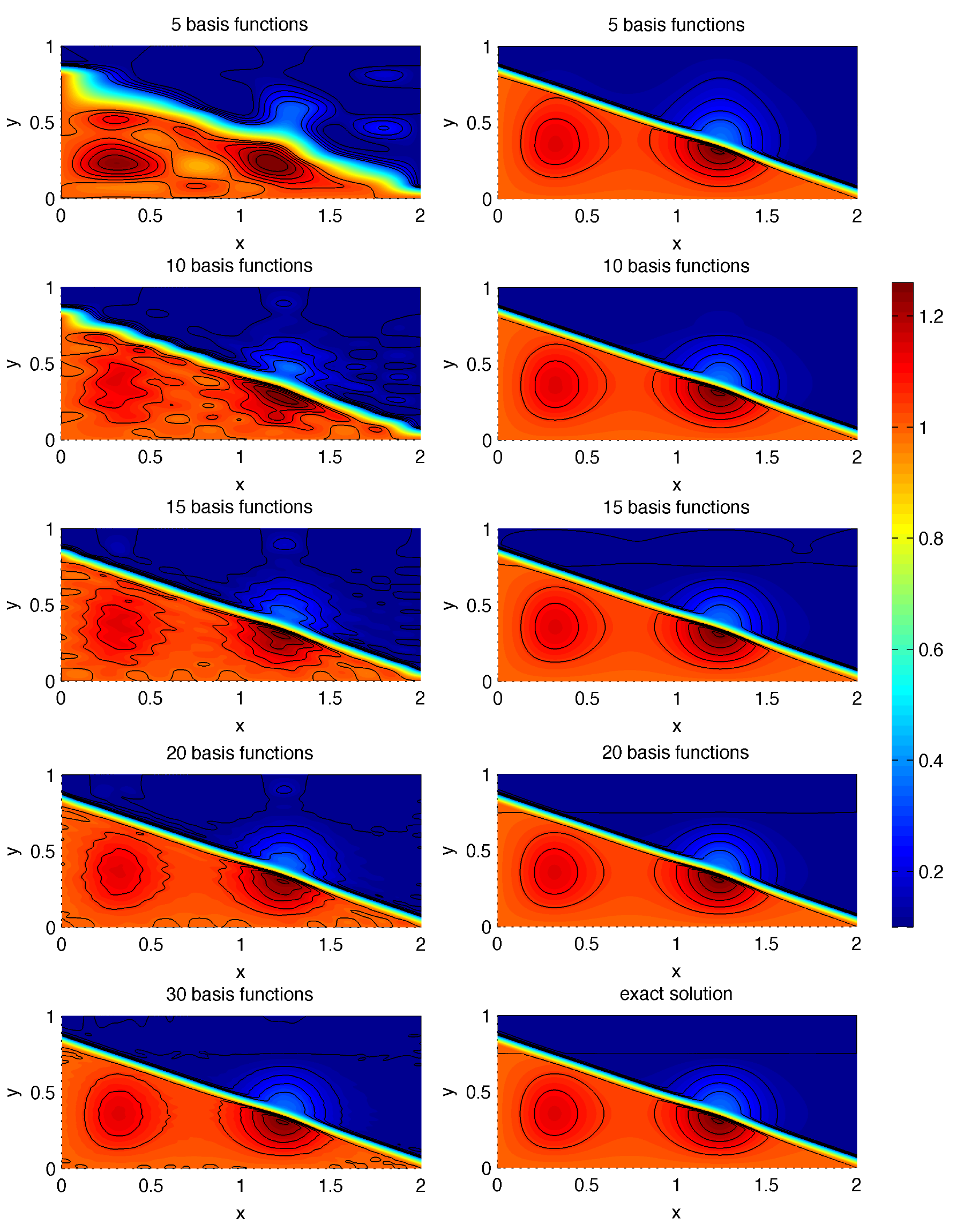} 
\caption{Test case 1: Comparison of the discrete reduced solution $p_{m}^{H}$ using the lifting function $g_{D}$ \eqref{lifting_gd} (left side) for $m =5,10,15,20,30$ (top-bottom), $p_{m}^{H}$ using $\mathfrak{h}$ \eqref{test1_interface} as the lifting function (right) and $m =5,10,15,20$ with the exact solution $\tilde{p}$ (right,bottom) for $N_{H}=800$, $n_{h}=400$, $N_{H'}=80$.\label{figwelle_loesung}}
\end{figure}
\subsection{Removing the interface from the model reduction procedure}\label{subsect:remove interface}

\paragraph*{Test case 1}

First, we consider a numerical example with an exact solution, where the full solution $p$ of \eqref{fullprob_gd} is a multiple of the analytic solution of test case 1 in \cite{OhlSme14} to enable a comparison with the situation where no interface is present. In detail, we solve a Poisson problem on $\Omega=(0,2)\times(0,1)$ with an exact solution $\tilde{p} = p + \mathfrak{h}$, where the interface $\mathfrak{h}$ and $p$ are defined as in 
\eqref{test1_interface} and \eqref{test1_interfaceb}, respectively. The solution $\tilde{p}$ is displayed in the last picture of Fig.~\ref{figwelle_loesung} and the skewed interface $\mathfrak{h}$ is clearly recognizable. First, we compare $\tilde{p}$ with the discrete reduced solution $\tilde{p}_{m}^{H}$, which has been computed using the lifting function 
\begin{equation}\label{lifting_gd}
g_{D} = (-0.5x+1) \mathfrak{h}(0,y) +0.5x \mathfrak{h}(2,y)
\end{equation}
for $m = 5,10,15,20,30$, $N_{H}= 800$, $n_{h} =400$ and $N_{H'} = 80$ (Fig.~\ref{figwelle_loesung}, left), where $N_{H'}$ has been defined in the previous section. 
Whereas $15$ basis functions are sufficient to obtain a good approximation of the interface $\mathfrak{h}$, we detect strong oscillations of $\tilde{p}_{15}^{H}$ in the other parts of the domain yielding still a bad approximation of $p$ \eqref{test1_interfaceb}. These oscillations decrease for increasing $m$ and for $m=30$ we obtain a reasonable approximation of $\tilde{p}$. If we use $\mathfrak{h}$ as the lifting function of the Dirichlet boundary conditions to compute $\tilde{p}_{m}^{H}$ (Fig.~\ref{figwelle_loesung}, right) no oscillations can be detected. The contour lines of $\tilde{p}_{20}^{H}$ match perfectly with the ones of $\tilde{p}$ and already for $m=10$ and $15$ only small deviations can be observed. All in all we see a much better qualitative convergence behavior for the solution of \eqref{prob_gd}. 

\begin{figure}[t]
\centering
\subfloat[{\footnotesize lifting function $g_{D}$ of \eqref{lifting_gd}} ]
{\includegraphics[scale = 0.31]{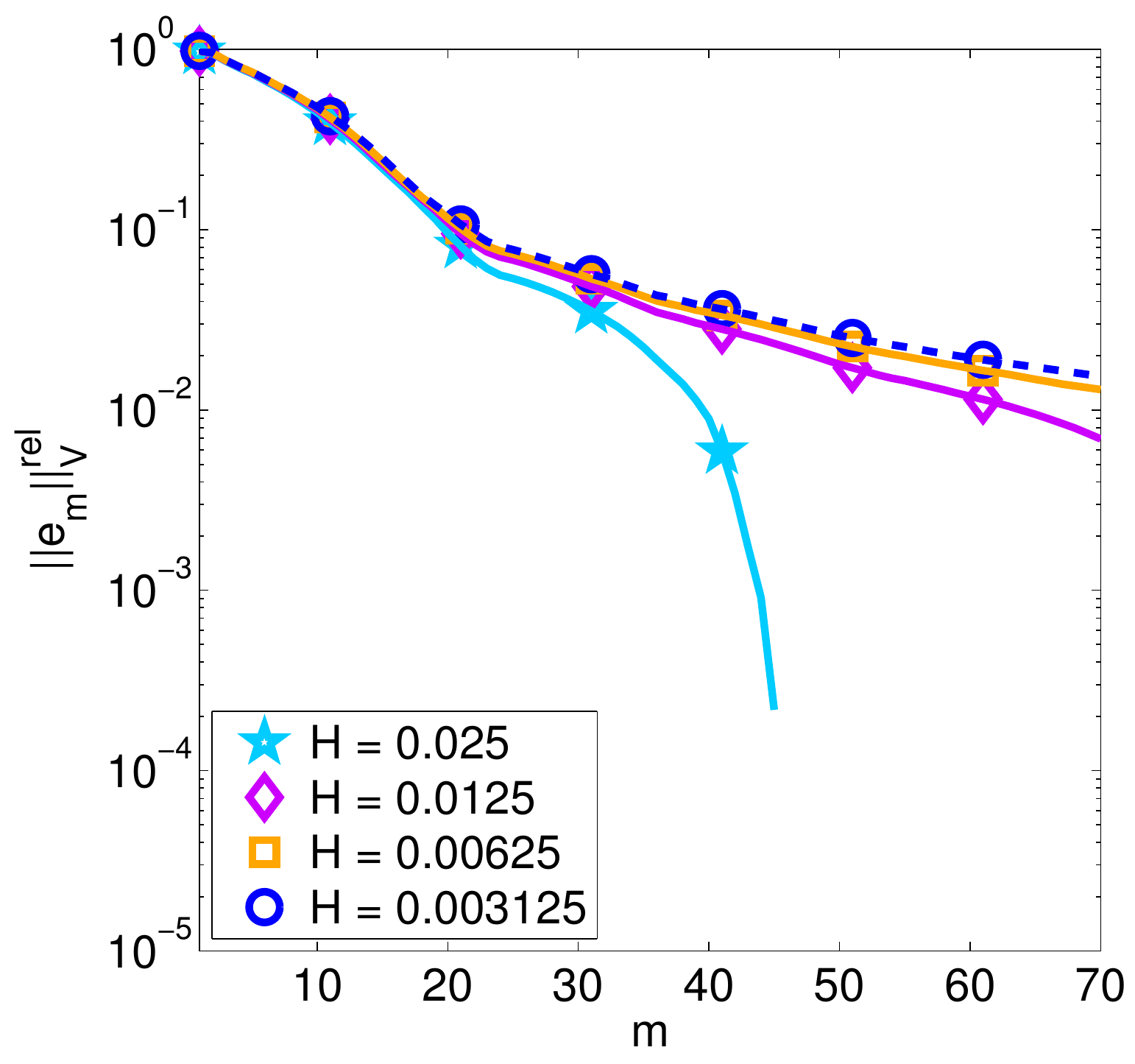}\label{fig44a}}
\subfloat[{\footnotesize solution of \eqref{prob_gd}, $\bar{Q}=2$}]{
\includegraphics[scale = 0.31]{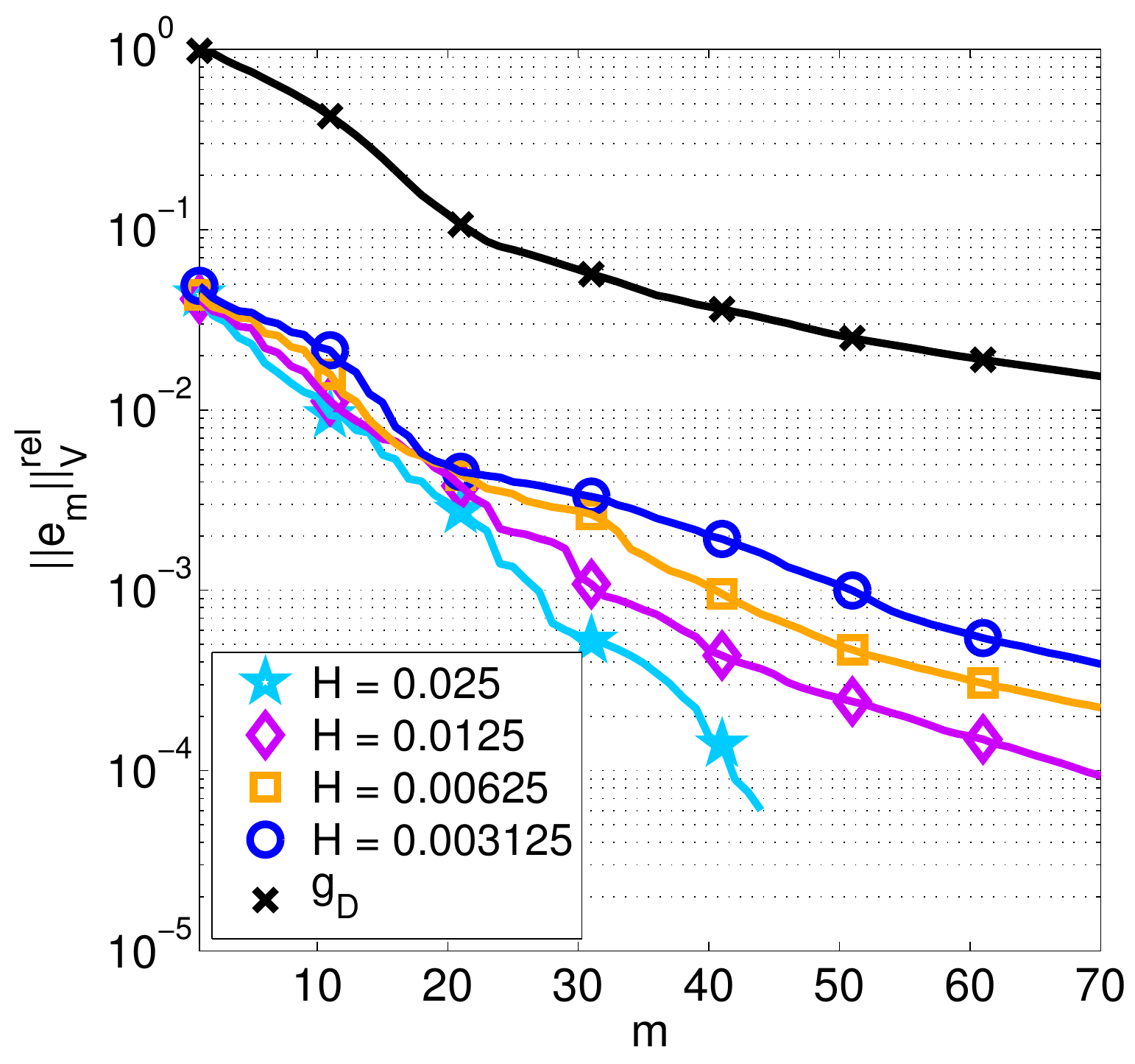}\label{fig44b}}
\subfloat[{\footnotesize including $\Delta \mathfrak{h}$}]{
\includegraphics[scale = 0.31]{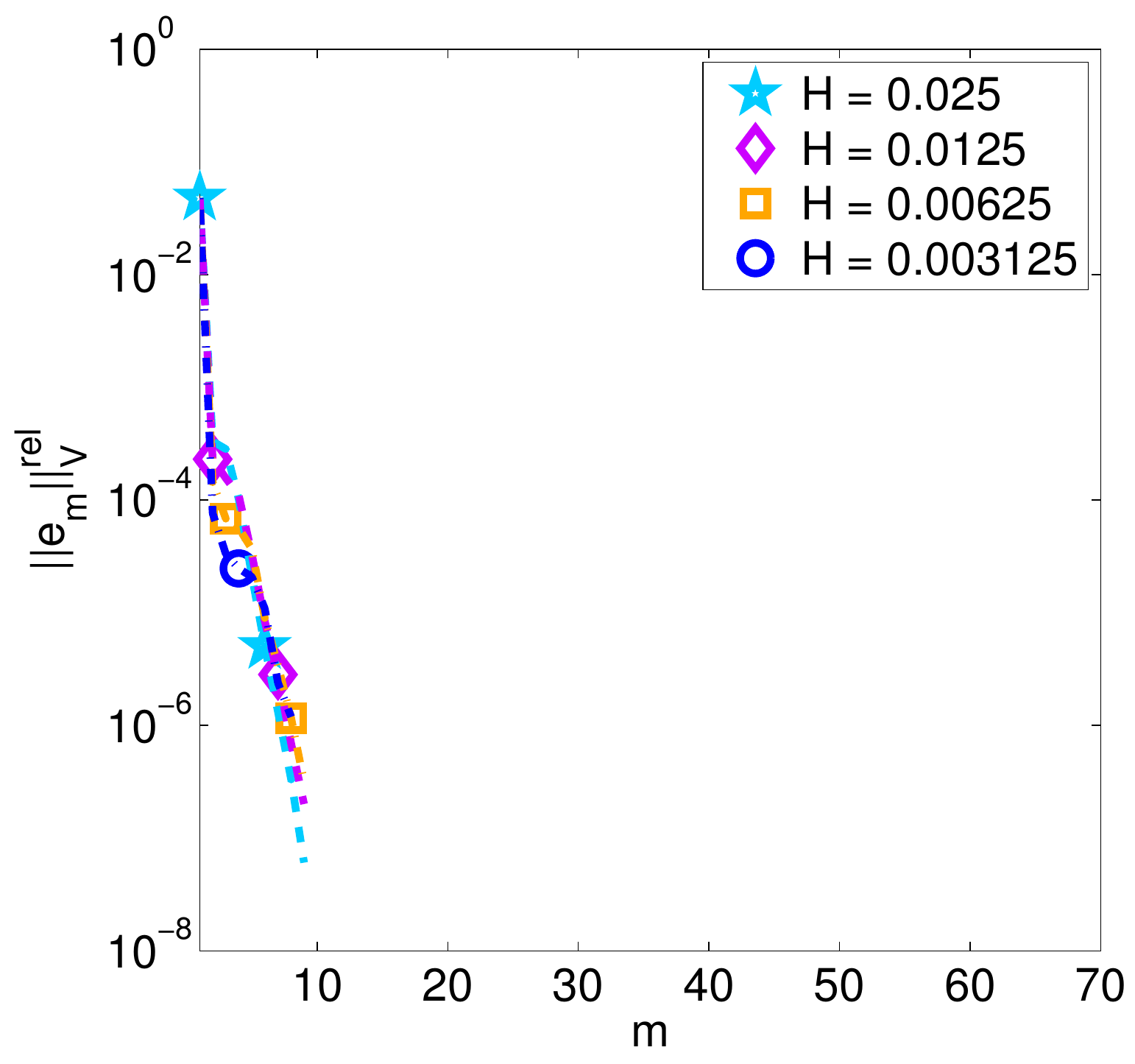}\label{fig44c}}
\caption{Test case 1: Comparison of the model error convergence
of $\|e_{m}\|_{V}^{rel}$. (a): We use the lifting function $g_{D}$ defined in \eqref{lifting_gd}. (b): We consider \eqref{prob_gd} and choose $\bar{Q} = 2$ in \eqref{1D_prob_quad_para_gd}. (c): We consider the right hand side $F+\Delta \mathfrak{h}$ in \eqref{prob_gd} and neglect the term $-a(\mathfrak{h},\xi_{i}^{H}\phi_{l})$ instead. All plots: $N_{H'}=10$.\label{fig441}}
\end{figure}

\begin{figure}[t]
\centering
\subfloat[{\footnotesize $\|e_{m}\|_{L^{2}(\Omega)}^{rel}$ and $ e_{m}^{\mbox{{\tiny POD}}}$}]{
\includegraphics[scale = 0.31]{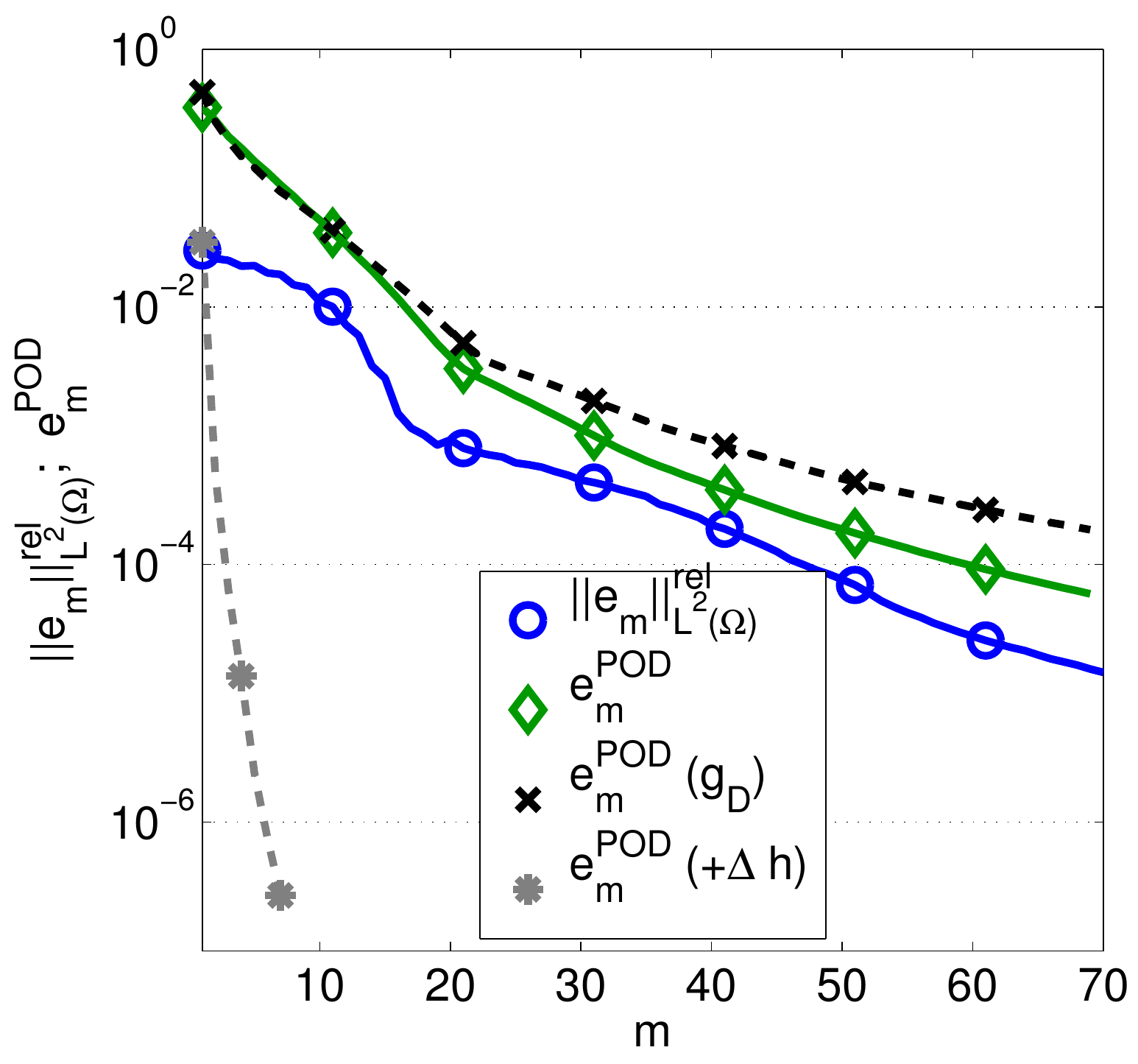} \label{fig44d}}
\subfloat[{\footnotesize $\lambda_{m}$ and $\|\bar{p}_{m}^{H}\|_{L^{2}(\Omega_{1D})}$}]{
\includegraphics[scale = 0.31]{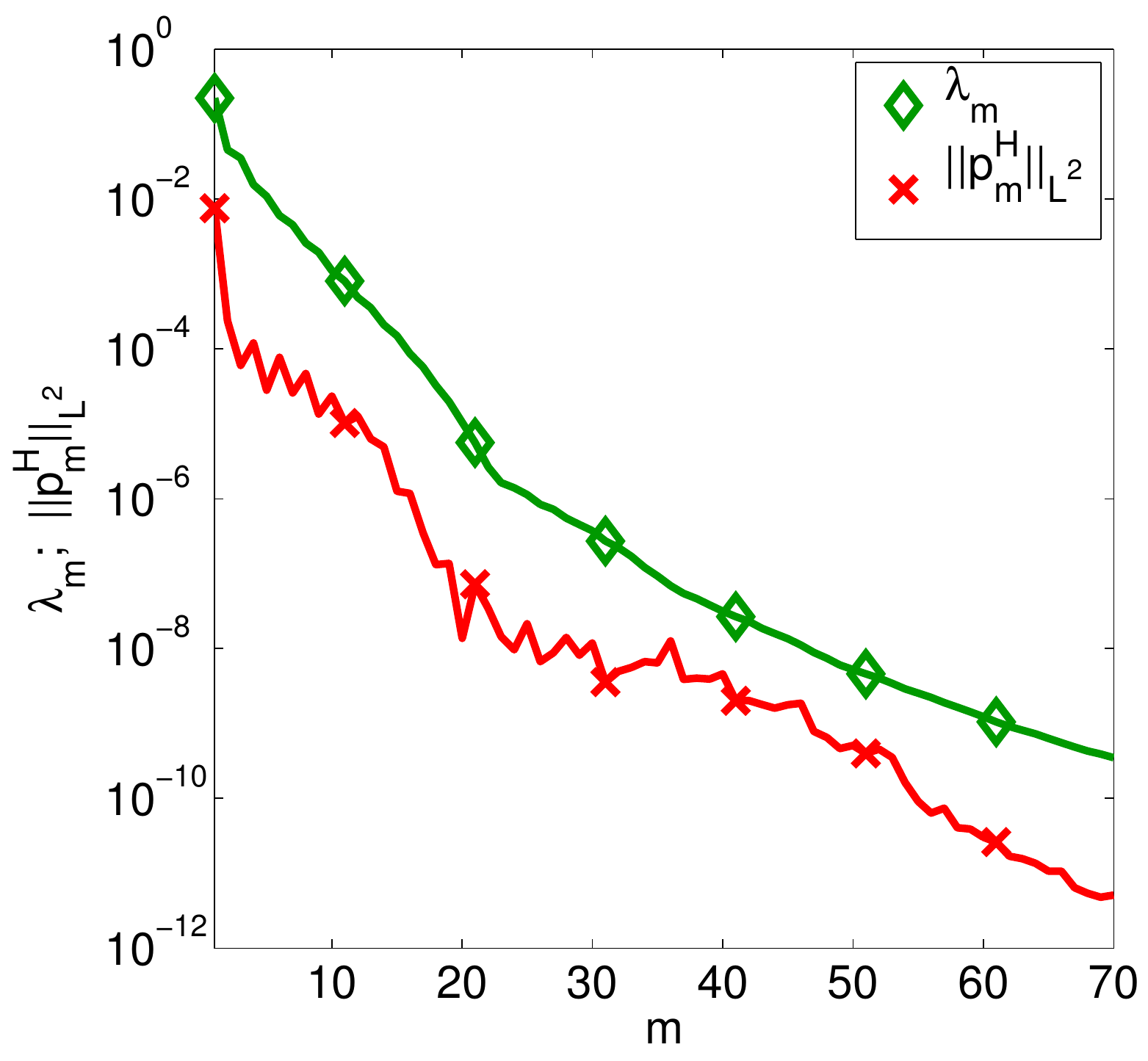}\label{fig44e}}
\subfloat[{\footnotesize $\|e_{m}\|_{V}^{rel}$, solution of \eqref{prob_gd}, $\bar{Q}=1$}]{
\includegraphics[scale = 0.31]{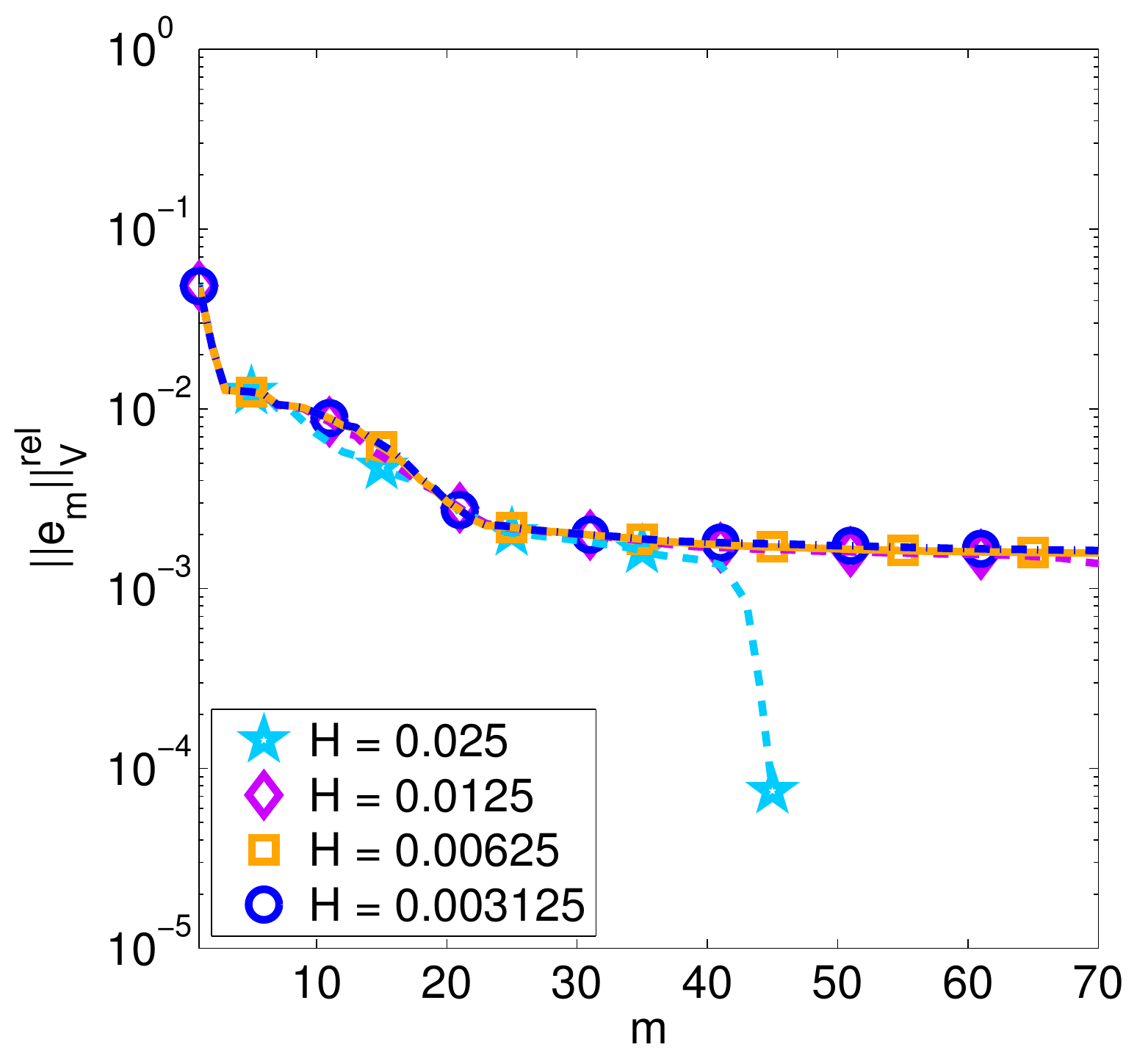} \label{fig44f}}
\caption{Test case 1: (a): Comparison of $\|e_{m}\|_{L^{2}(\Omega)}^{rel}$ and $e_{m}^{\mbox{{\tiny POD}}}$ for the following situations and legend entries: $e_{m}^{\mbox{{\tiny POD}}}$: We consider the discrete reduced problem \eqref{prob_gd}. $e_{m}^{\mbox{{\tiny POD}}} (+ \Delta \mathfrak{h})$: We consider the right hand side $F+\Delta \mathfrak{h}$ in \eqref{prob_gd} and neglect the term $-a(\mathfrak{h},\xi_{i}^{H}\phi_{l})$ instead. $e_{m}^{\mbox{{\tiny POD}}} (g_{D})$: We use the lifting function $g_{D}$ defined in \eqref{lifting_gd}. (b) Comparison of $\lambda_{m}$ and $\|\bar{p}_{m}^{H}\|_{L^{2}(\Omega_{1D})}^{2}$ for the solution of \eqref{prob_gd}. (c): Comparison of $\|e_{m}\|_{V}^{rel}$ for \eqref{prob_gd} and $\bar{Q} = 1$ in \eqref{1D_prob_quad_para_gd}. All plots: $N_{H'}=10$. \label{fig44}}
\end{figure}

\begin{figure}[t]
\centering
\subfloat[{\footnotesize lift. funct. $g_{D}$ \eqref{lifting_gd}}]{
\includegraphics[scale = 0.23]{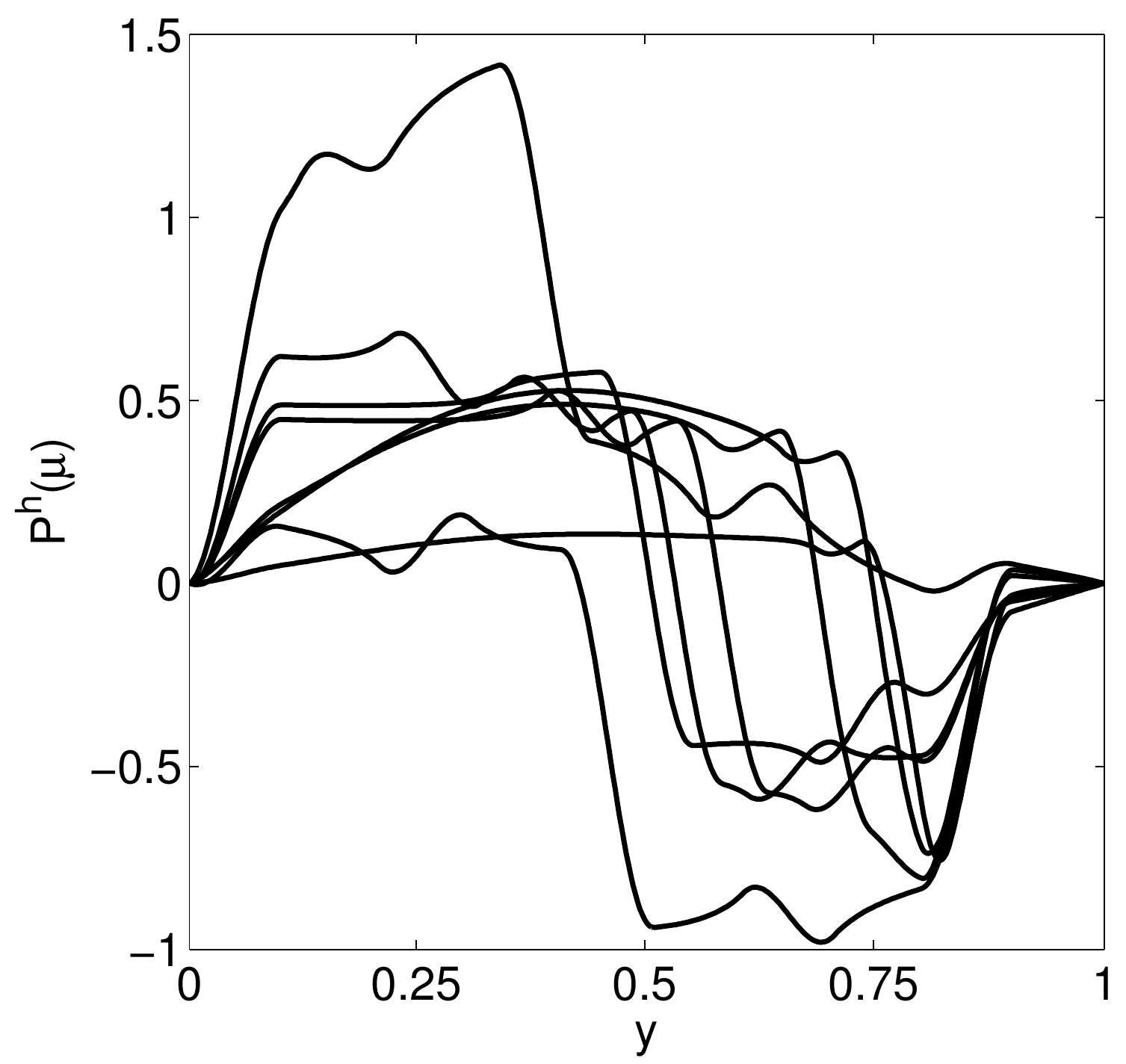}\label{figmani_wellec}}
\subfloat[{\footnotesize incl. $\Delta \mathfrak{h}$}]{
\includegraphics[scale = 0.23]{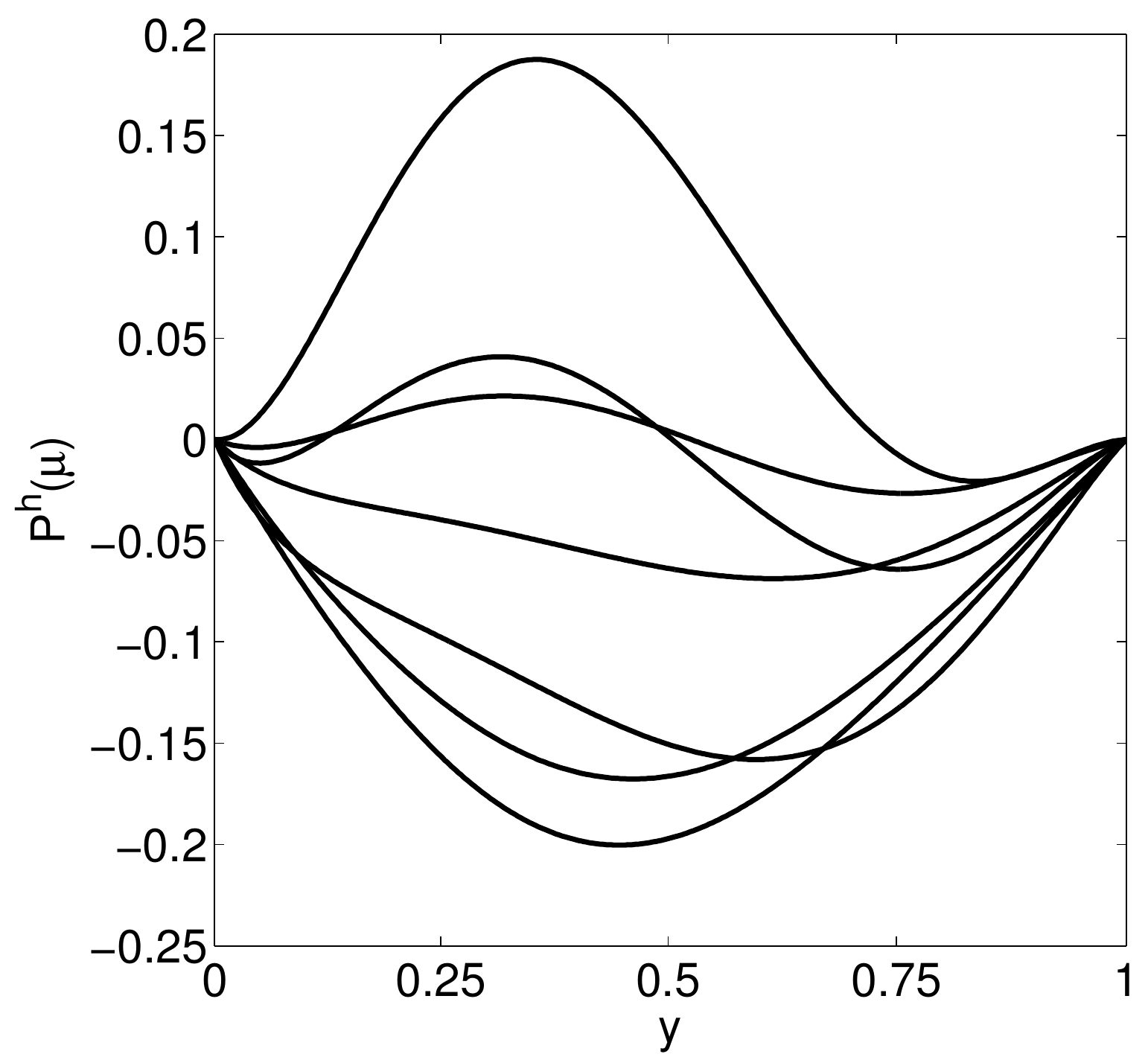}\label{figmani_welleb}}
\subfloat[{\footnotesize \eqref{1D_prob_quad_para_gd}, $\bar{Q}=2$} ]
{\includegraphics[scale = 0.23]{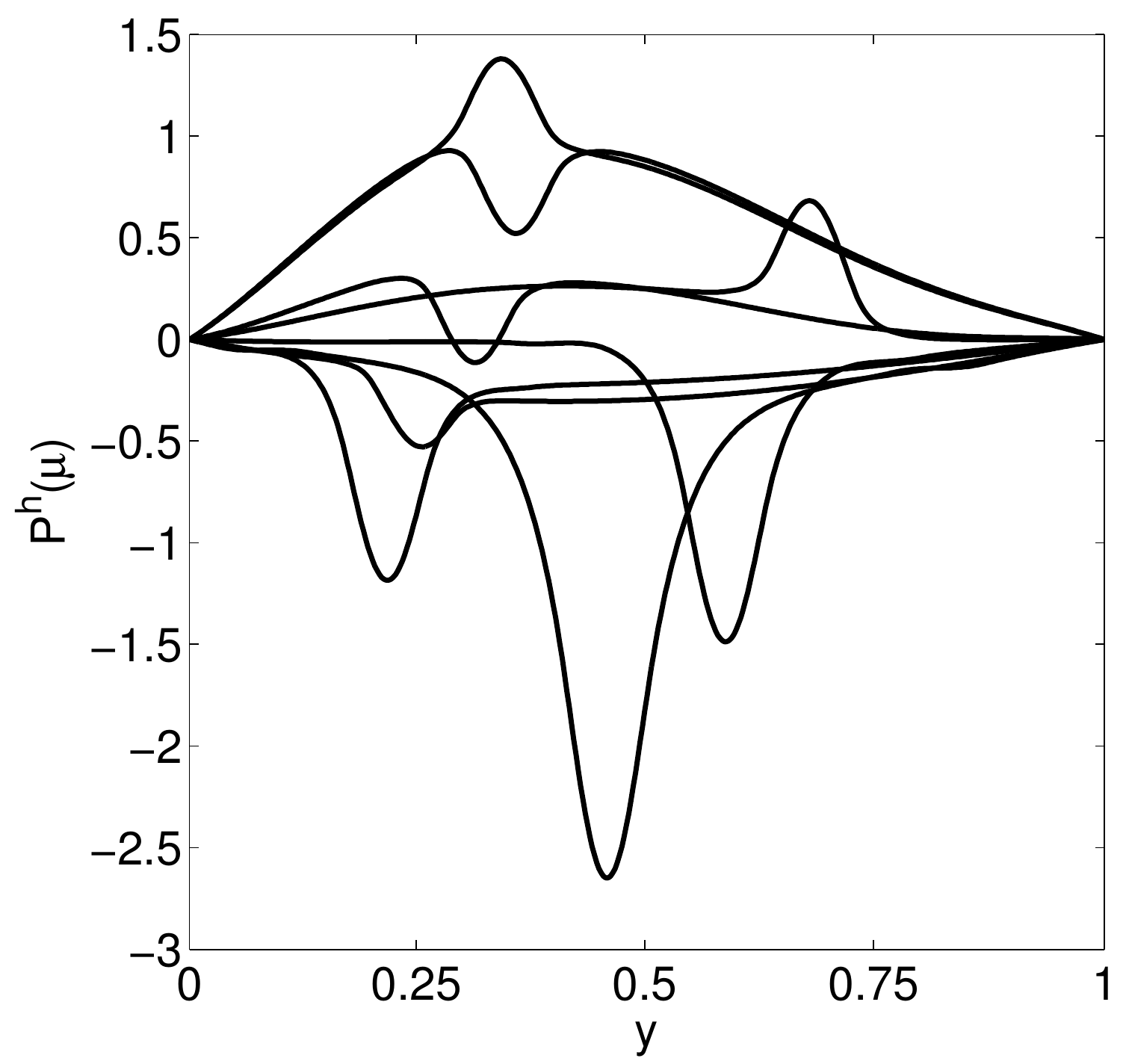}\label{figmani_wellea}}
\subfloat[{\footnotesize \eqref{1D_prob_quad_para_gd}, $\bar{Q}=1$}]{
\includegraphics[scale = 0.23]{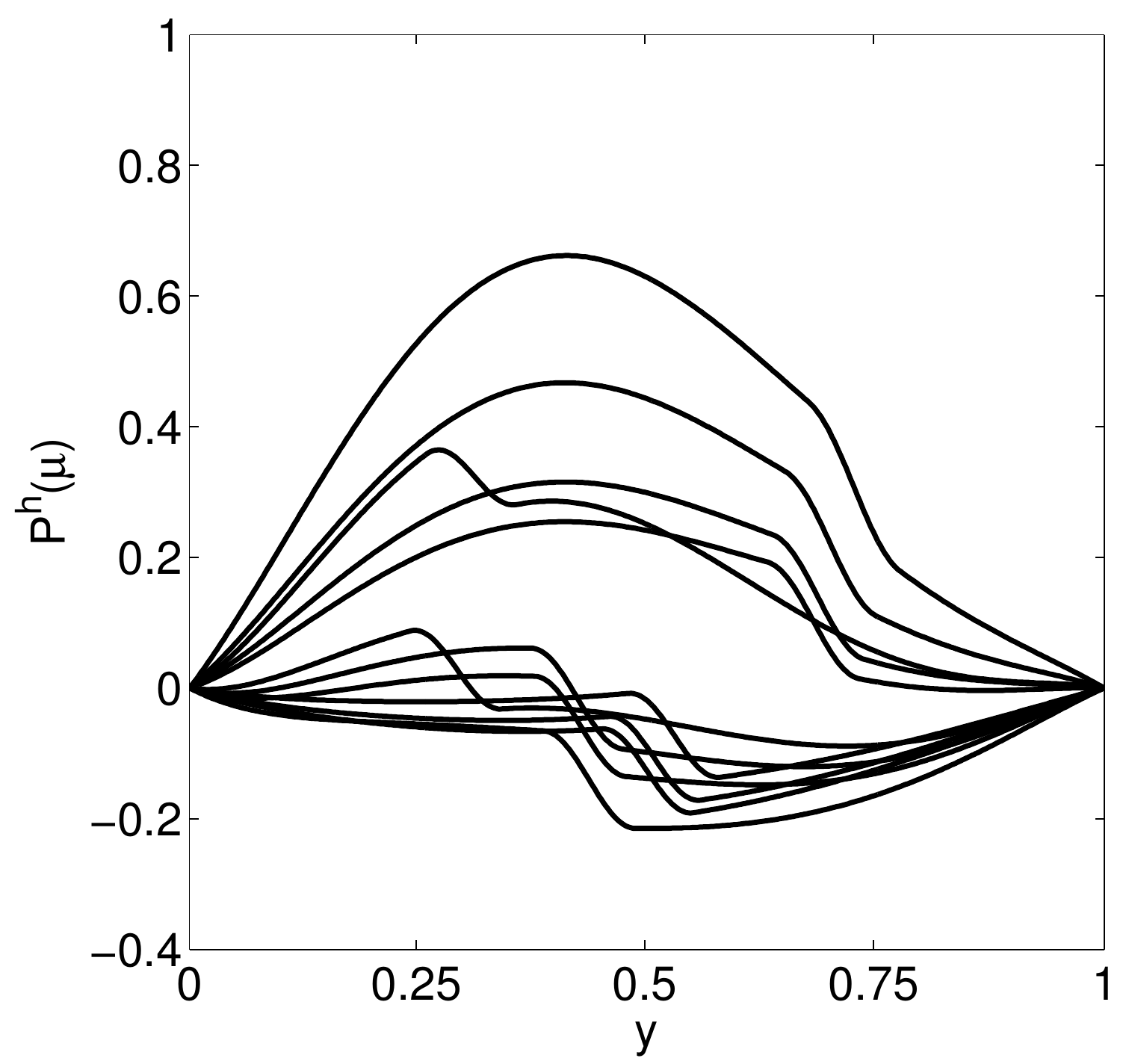} \label{figmani_welled}}
\caption{Test case 1: Typical snapshots of the discrete solution manifold $\mathcal{M}_{\Xi}$ \eqref{disc_mani} if the lifting function $g_{D}$ \eqref{lifting_gd} is used and therefore no information on the interface is included in the model reduction procedure (a), if $\Delta \mathfrak{h}$ is added to $F$ in \eqref{red_prob_gd}  (b), \eqref{1D_prob_quad_para_gd} is solved with $\bar{Q}=2$ (c) and $\bar{Q} = 1$ (d).\label{figmani_welle}}
\end{figure}
To analyze the capacity of the proposed method to improve the convergence behavior also from a quantitative viewpoint, we compare the convergence behavior of the relative model error $\|e_{m}\|_{V}^{rel}$ for increasing model order $m$. 
Using $g_{D}$ requires more than $70$ basis functions to obtain $\|e_{m}\|_{V}^{rel} \leq 0.01$ for $H \leq 0.00625$ (Fig.~\ref{fig44a}), which can be attained for $m = 16$ if choosing $\mathfrak{h}$ instead (Fig.~\ref{fig44b}). However, we see in Fig.~\ref{fig44b} that the convergence rate has only improved slightly and that the improvement of the convergence behavior can be mainly ascribed to a better relative error already for $m =1$ of the solution of \eqref{prob_gd}. We suppose that this is due to the fact that the information on the derivative of the interface in the dominant direction is not properly included in the parametrized lower dimensional system \eqref{1D_prob_quad_para_gd}. We have thus recomputed the RB-HMR approximation, employing a reconstruction of the derivative of the interface in the dominant direction for the derivation of the parameterized lower dimensional system \eqref{1D_prob_quad_para_gd}.  This reconstruction mimics the behavior of the derivative in two space dimensions and is added to the right hand side $F$, where the term $a^{q}(\mathfrak{h},\upsilon^{h}_{j}\xi_{k}^{q};\mu)$ is neglected. One example for such a reconstruction is the Riesz representative $\mathcal{R}^{H\times h}$, defined as the solution of 
\begin{equation}\label{Riesz_gd}
\int_{\widehat{\Omega}} \mathcal{R}^{H\times h} v^{H\times h} = a(\mathfrak{h}, v^{H\times h}), \qquad \forall v^{H\times h} \in V^{H \times h}. 
\end{equation}
Note that adding $\mathcal{R}^{H\times h}$ is equivalent to adding 
$\Delta \mathfrak{h}$, if $\mathfrak{h}$ is sufficiently regular. Finally, we add also in \eqref{prob_gd} the reconstruction to the right hand side and neglect the term $-a(\mathfrak{h},\xi_{i}^{H}\phi_{l})$ instead. As $\mathfrak{h}$ is sufficiently regular for the present test case, we use $\Delta \mathfrak{h}$ as a reconstruction. By doing so we obtain the expected fast exponential convergence rate of $\|e_{m}\|_{V}^{rel}$ (Fig.~\ref{fig44c}), already observed in test case 1  in \cite{OhlSme14}, Fig.~2(a). 

As the convergence rates of $\|e_{m}\|_{L^{2}(\Omega)}^{rel}$ and $e_{m}^{{\tiny \text{POD}}}$ and also $\lambda_{m}$ and $\|\bar{p}_{m}^{H}\|_{L^{2}(\Omega_{1D})}$ for the solution of \eqref{prob_gd} coincide to a great extend (Fig.~\ref{fig44d} and Fig.~\ref{fig44e}), we conclude that already the discrete solution manifold $\mathcal{M}_{\Xi}$ --- defined in \eqref{disc_mani} --- used for the computation of the reduction space $Y_{m}$ and thus the discrete reduced solution $\tilde{p}_{m}^{H}$ of \eqref{prob_gd} can be badly approximated by a $m$-dimensional subspace. Hence although the reference solution $p^{H\times h}$ of \eqref{truth_prob_gd} can be approximated exponentially fast (Fig.~\ref{fig44c}), this cannot be exploited due to a deficient construction of $\mathcal{M}_{\Xi}$. We further infer that the equivalence of including $\mathfrak{h}$ in the strong or weak formulation of \eqref{red_prob_gd} cannot be reproduced properly by the lower-dimensional problems, because otherwise the convergence rates of $e_{m}^{{\tiny \text{POD}}}$ would be the same for both cases. 

This statement can be confirmed by comparing the discrete solution manifold $\mathcal{M}_{\Xi}$ for the different cases. If using $g_{D}$ as the lifting function the snapshots $\mathcal{P}^{h}(\mu)$ contain all the sharp interface, have seldom the right slope in the other part of $\widehat{\omega}$ and exhibit often strong oscillations (Fig.~\ref{figmani_wellec}), explaining both the qualitative and quantitative convergence behavior obtained in this case. If we add $\Delta \mathfrak{h}$ in the strong formulation of the PDE we see in Fig.~\ref{figmani_welleb} that the snapshots $\mathcal{P}^{h}(\mu)$ resemble evaluations of $p$ \eqref{test1_interfaceb}. In contrast the snapshots of \eqref{1D_prob_quad_para_gd} for $\bar{Q}=2$ are either located peaks or additionally contain small peaks at the location of the interface (Fig.~\ref{figmani_wellea}), showing that the interface $\mathfrak{h}$ has not been removed completely from the snapshots $\mathcal{P}^{h}(\mu)$ as it has been possible in the case of adding $\Delta \mathfrak{h}$ (Fig.~\ref{figmani_welleb}). Nevertheless, we emphasize that a significant improvement of the convergence behavior has been achieved (Fig.~\ref{fig44b}) 
for $2$ quadrature points. Finally, we remark that for $\bar{Q}=1$ we observe for $m \geq 20$ a stagnation of $\|e_{m}\|_{V}^{rel}$ (Fig.~\ref{fig44f}). This can be ascribed to the fact that although the snapshots $\mathcal{P}^{h}(\mu) \in \mathcal{M}_{\Xi}$ in Fig.~\ref{figmani_welled} show a certain similarity to the snapshots in Fig.~\ref{figmani_welleb}, they all contain different slopes of the interface, which are not present in $p$ \eqref{test1_interfaceb}, prohibiting a convergence to the reference solution. This indicates that the artificial coupling introduced in \S \ref{1dproblem_bilFEM} is necessary to obtain a good approximation behavior of the RB-HMR approach. 
\begin{figure}[h!]
\center
\includegraphics[scale=0.6]{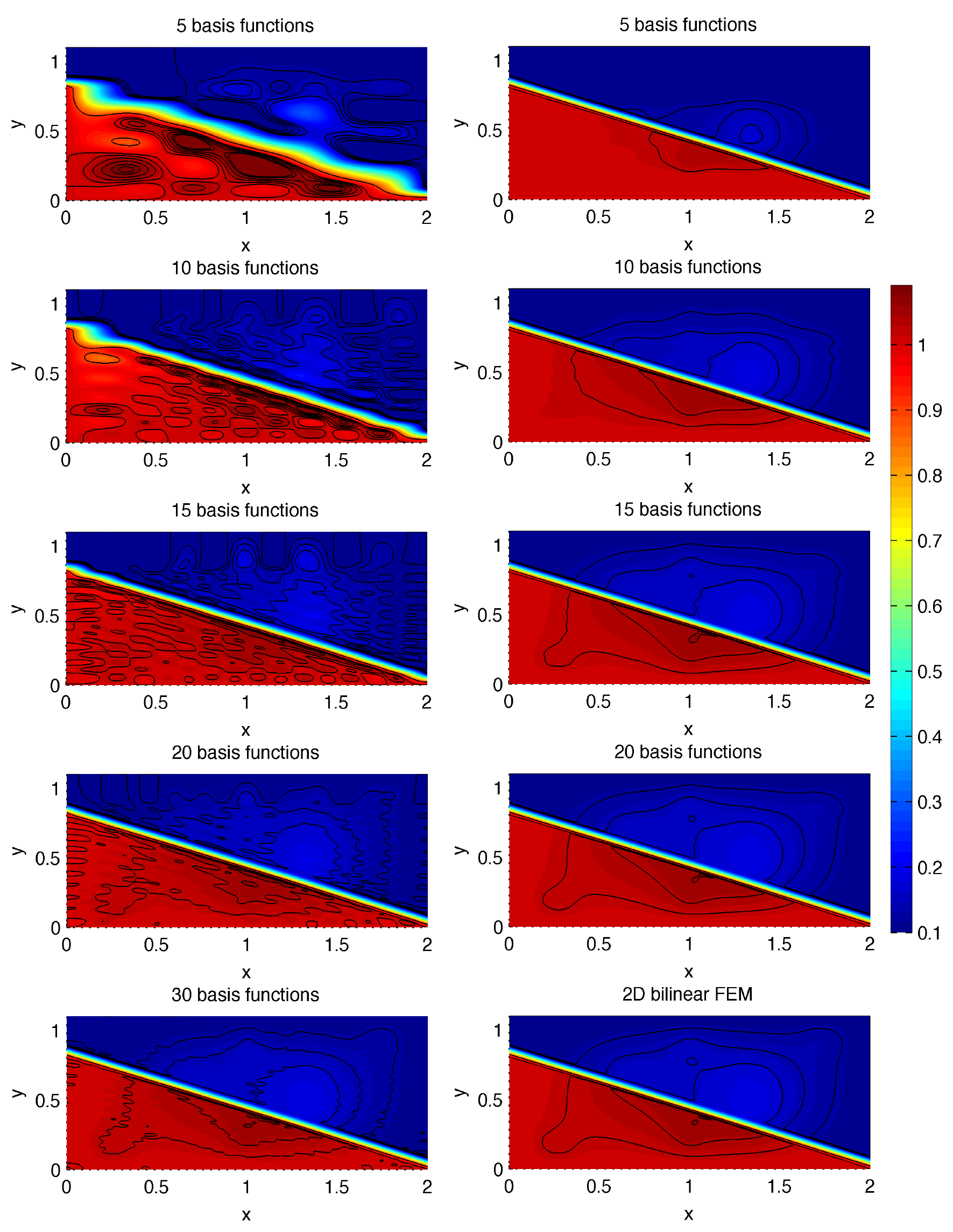} 
\caption{Test case 2: Comparison of the discrete reduced solution $p_{m}^{H}$ using the lifting function $g_{D}$ defined in \eqref{lifting_gd2} (left side) for $m =5,10,15,20,30$ (top-bottom), $p_{m}^{H}$ using $\mathfrak{h}$ \eqref{test2_interface} as the lifting function (right) and $m =5,10,15,20$ with the reference solution $\tilde{p}^{H\times h}$ (right,bottom) for $N_{H}=800$, $n_{h}=440$, $N_{H'}=80$.\label{figsource_loesung}}
\end{figure}

\paragraph*{Test case 2}
\begin{figure}[t]
\centering
\subfloat[{\footnotesize lifting function $g_{D}$ \eqref{lifting_gd2}} ]
{\includegraphics[scale = 0.31]{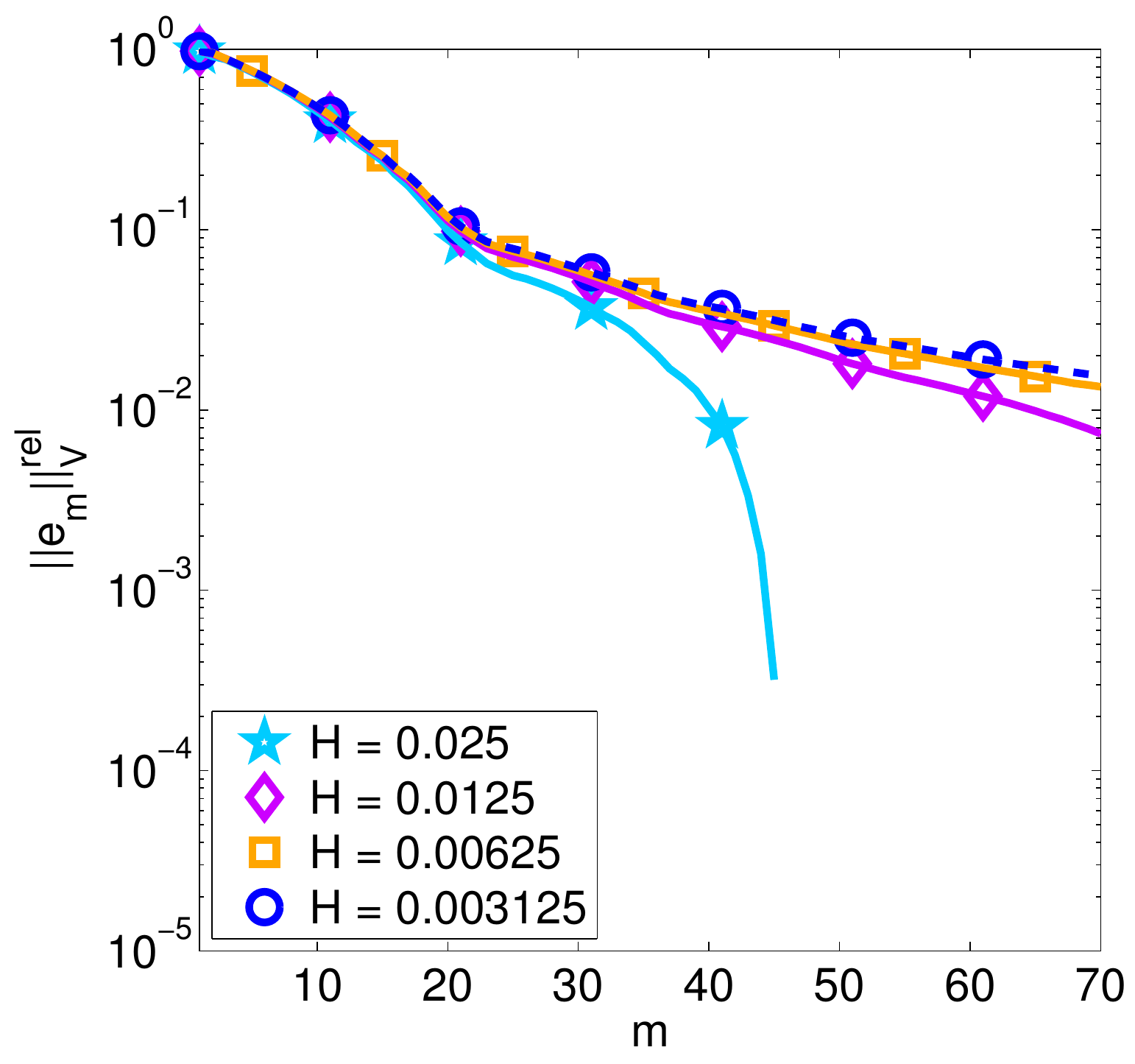}\label{fig46a}}
\subfloat[{\footnotesize solution of \eqref{prob_gd}, $\bar{Q}=2$}]{
\includegraphics[scale = 0.31]{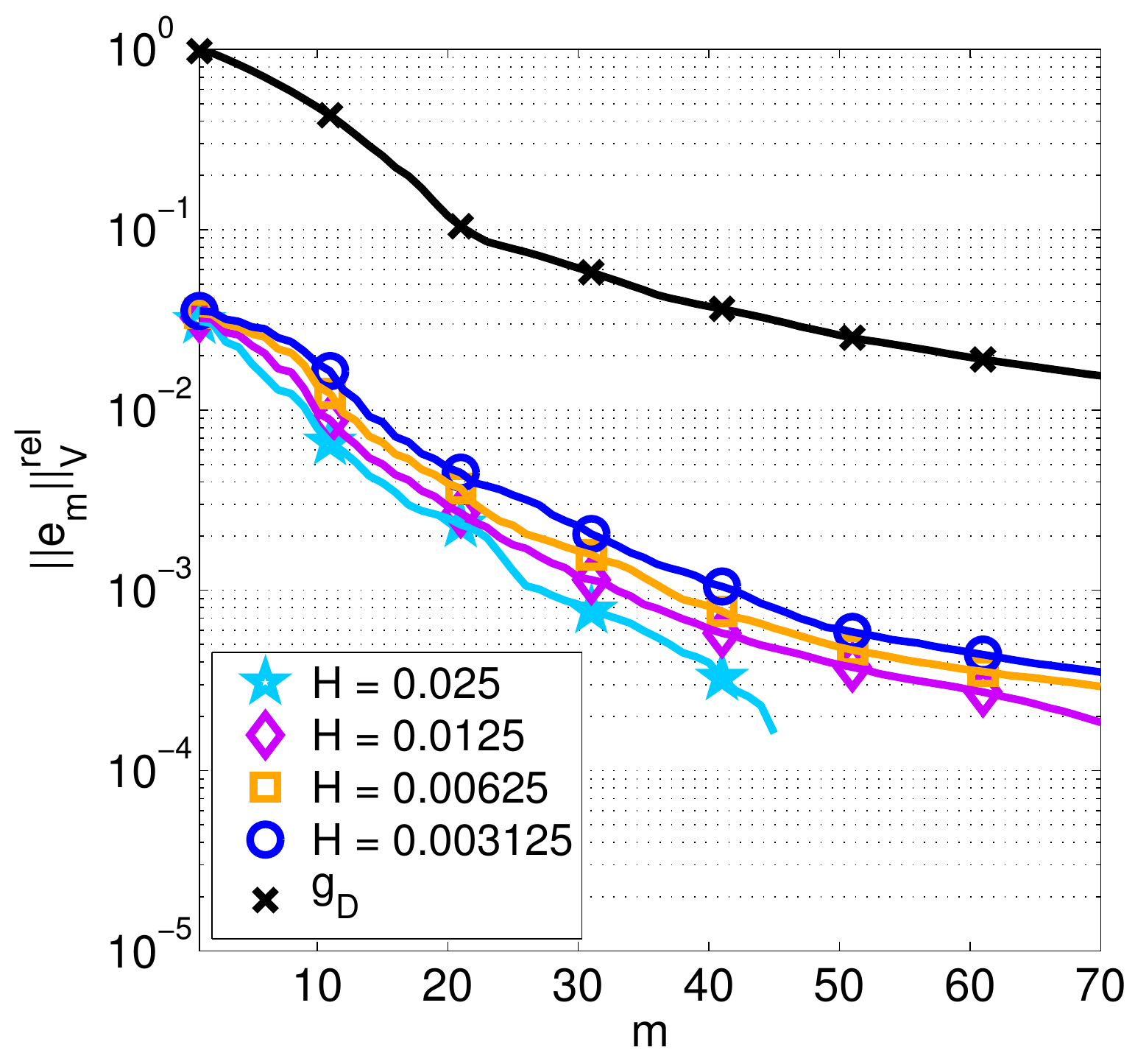}\label{fig46b}}
\subfloat[{\footnotesize including $\Delta \mathfrak{h}$}]{
\includegraphics[scale = 0.31]{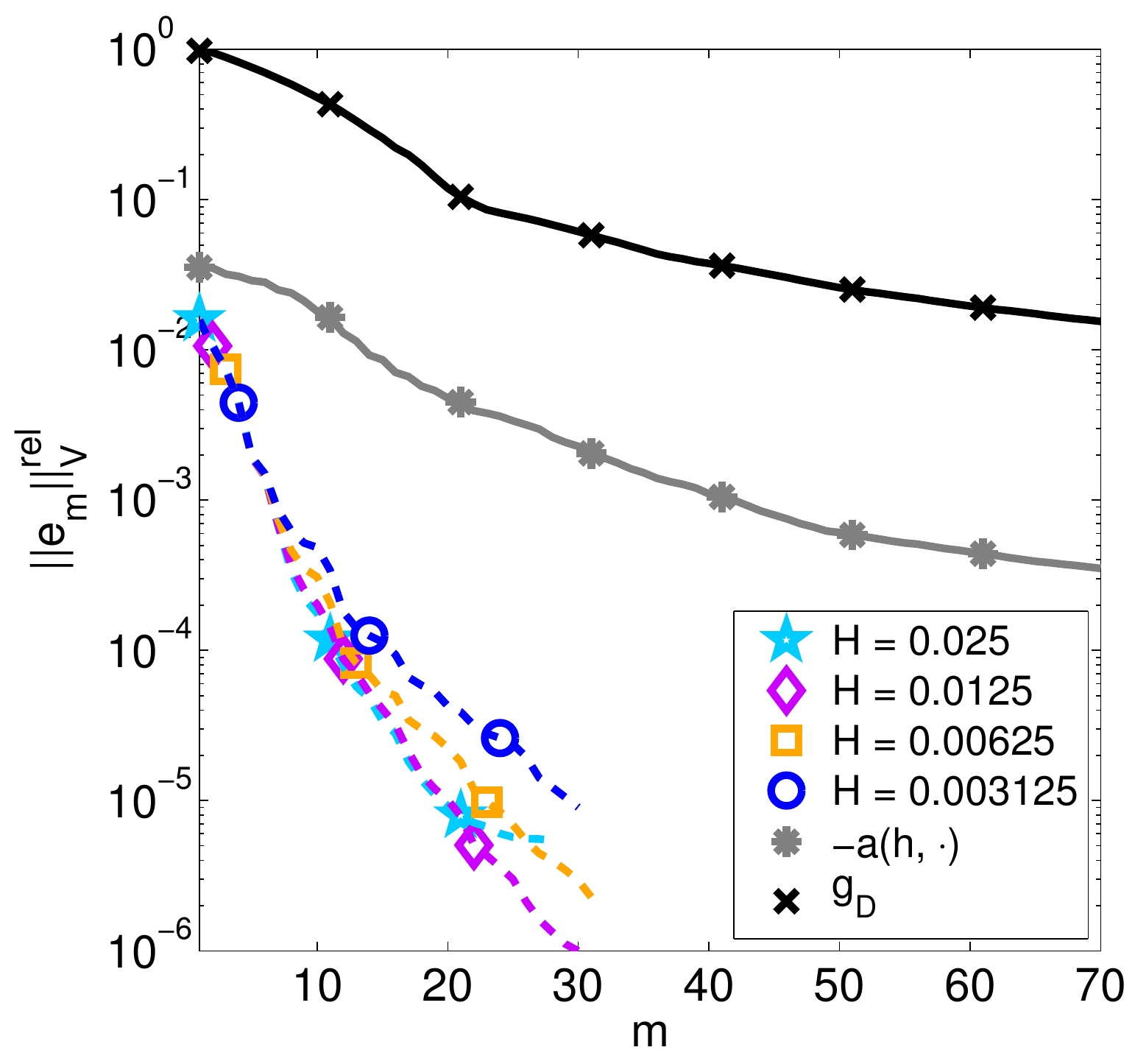}\label{fig46c}}
\caption{Test case 2: Comparison of the model error convergence
of $\|e_{m}\|_{V}^{rel}$. (a): We use the lifting function $g_{D}$ defined in \eqref{lifting_gd2}. (b): We consider \eqref{prob_gd} and choose $\bar{Q} = 2$ in \eqref{1D_prob_quad_para_gd}. (c): We consider the right hand side $F+\Delta \mathfrak{h}$ in \eqref{prob_gd} and neglect the term $-a(\mathfrak{h},\xi_{i}^{H}\phi_{l})$ instead. All plots: $N_{H'}=10$.\label{fig46}}
\vspace*{-10pt}
\end{figure}
In this test case we consider a problem in which the solution exhibits more complex features than in the previous example due to a discontinuous source term. In detail, we solve a Poisson problem on $\Omega=(0,2)\times(0,1.1)$ with a concentration profile $\mathfrak{h}$ and a source term $F$  defined as
 \begin{align}                                           
\label{test2_interface}  \mathfrak{h}(x,y) = \begin{cases}
                        1 \quad &\text{if} \enspace y + 0.4x < 0.8,\\
                        0.1 \quad & \text{if} \enspace y + 0.4x > 0.9,\\
                        0.55 + 0.45\cos(10\pi (y+0.4x-0.8)) &\text{if} \enspace 0.8 \leq y+0.4x\leq 0.9.
                        \end{cases}
\end{align}
and
\begin{align*}
 \quad F(x,y) = - \Delta \mathfrak{h} + \begin{cases}
2.25 \quad &\text{if} \enspace 0.15 \leq x \leq 0.35, \enspace 0.05 \leq y \leq 0.25,\\ 
2.25 \quad &\text{if} \enspace 0.55 \leq x \leq 0.75, \enspace 0.6 \leq y \leq 0.8,\\ 
4 \quad &\text{if} \enspace 0.95 \leq x \leq 1.05, \enspace 0.15 \leq y \leq 0.35\enspace \text{and} \enspace 0.75 \leq y \leq 0.95,\\ 
4 \quad &\text{if} \enspace 1.25 \leq x \leq 1.45, \enspace 0.35 \leq y < 0.55,\\ 
2 \quad &\text{if} \enspace 1.25 \leq x \leq 1.45, \enspace 0.55 \leq y \leq 0.75,\\ 
2.25 \quad &\text{if} \enspace 1.65 \leq x \leq 1.85, \enspace 0.85 \leq y \leq 1.05,
\end{cases}
 \end{align*}
We prescribe non-homogeneous Dirichlet boundary conditions on the whole $\partial \Omega$, where the respective boundary values are obtained by evaluating $\mathfrak{h}$ on $\partial \Omega$. The reference solution $\tilde{p}^{H\times h}$ for $N_{H}=800$, $n_{h} =440$ is depicted in the last picture of Fig.~\ref{figsource_loesung} and contains apart from the skewed interface $\mathfrak{h}$ also a part $p^{H\times h}$ induced by the source term. We have done a convergence study in the mesh size to ensure that $\tilde{p}^{H\times h}$ contains all essential features of the exact solution.
Comparing $\tilde{p}^{H\times h}$ with the discrete reduced solution $\tilde{p}_{m}^{H}$, computed using the lifting function 
\begin{equation}\label{lifting_gd2}
g_{D} = (-0.5x+1) \mathfrak{h}(0,y) +0.5x \mathfrak{h}(2,y)
\end{equation}
for $m = 5,10,15,20,30$, $N_{H}=800$, $n_{h}=440$ and $N_{H'} = 80$ (Fig.~\ref{figsource_loesung}, left), we observe that $30$ basis functions are required to obtain an acceptable approximation of $\tilde{p}^{H\times h}$. The oscillations are even stronger than in the previous example and result in a very slow approximation of $p^{H \times h}$. Note that even for $m=15$ no resemblance between the contour lines of $\tilde{p}_{15}^{H}$ and $\tilde{p}^{H\times h}$ can be detected. In contrast already $\tilde{p}_{15}^{H}$ --- the solution of \eqref{prob_gd} for $m=15$ --- contains all essential features of the solution including the two small peaks around $x =1$ and we see a very good visual agreement of  $\tilde{p}_{20}^{H}$ and  $\tilde{p}^{H\times h}$. Altogether we observe a very much better qualitative convergence behavior, when choosing $\mathfrak{h}$ instead of $g_{D}$ as the lifting function, where the difference is even larger than in the previous test case. 

\begin{figure}[t]
\centering
\subfloat[{\footnotesize lift. func. $g_{D}$ \eqref{lifting_gd2}}]{
\includegraphics[scale = 0.23]{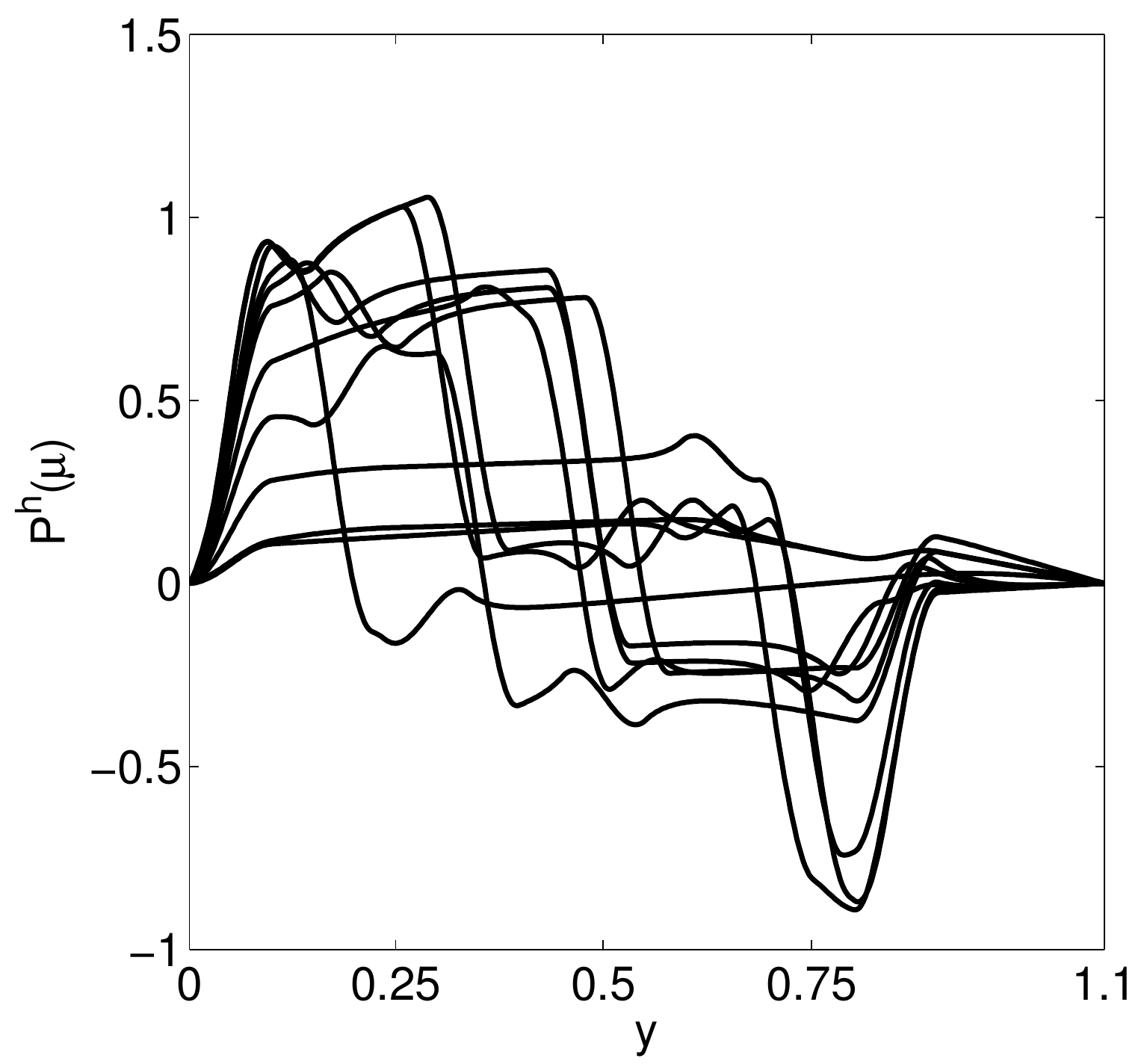}\label{figmani_sourcec}}
\subfloat[{\footnotesize incl. $\Delta \mathfrak{h}$}]{
\includegraphics[scale = 0.23]{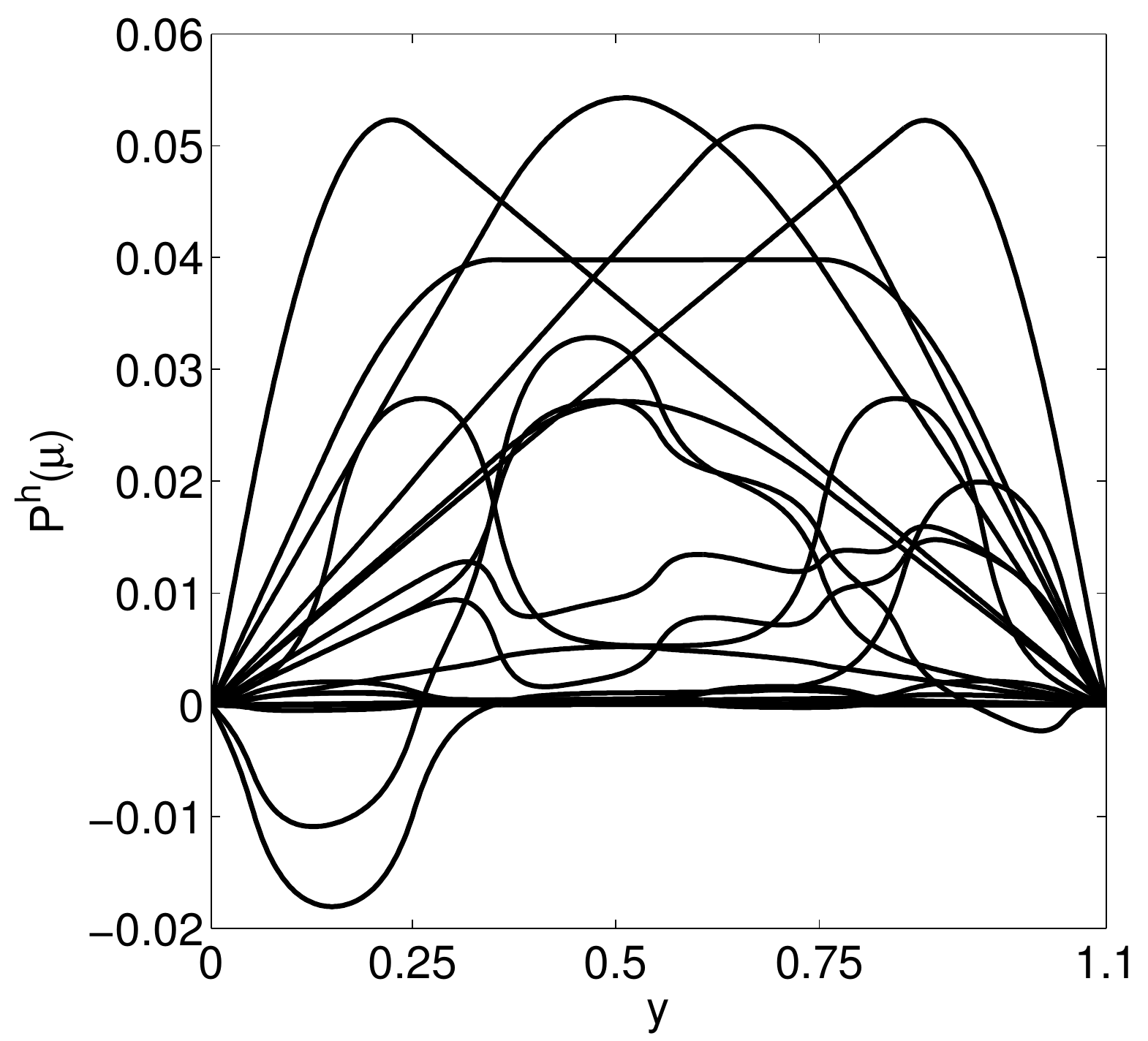}\label{figmani_sourcea}}
\subfloat[{\footnotesize \eqref{1D_prob_quad_para_gd}, $\bar{Q}=2$} ]
{\includegraphics[scale = 0.23]{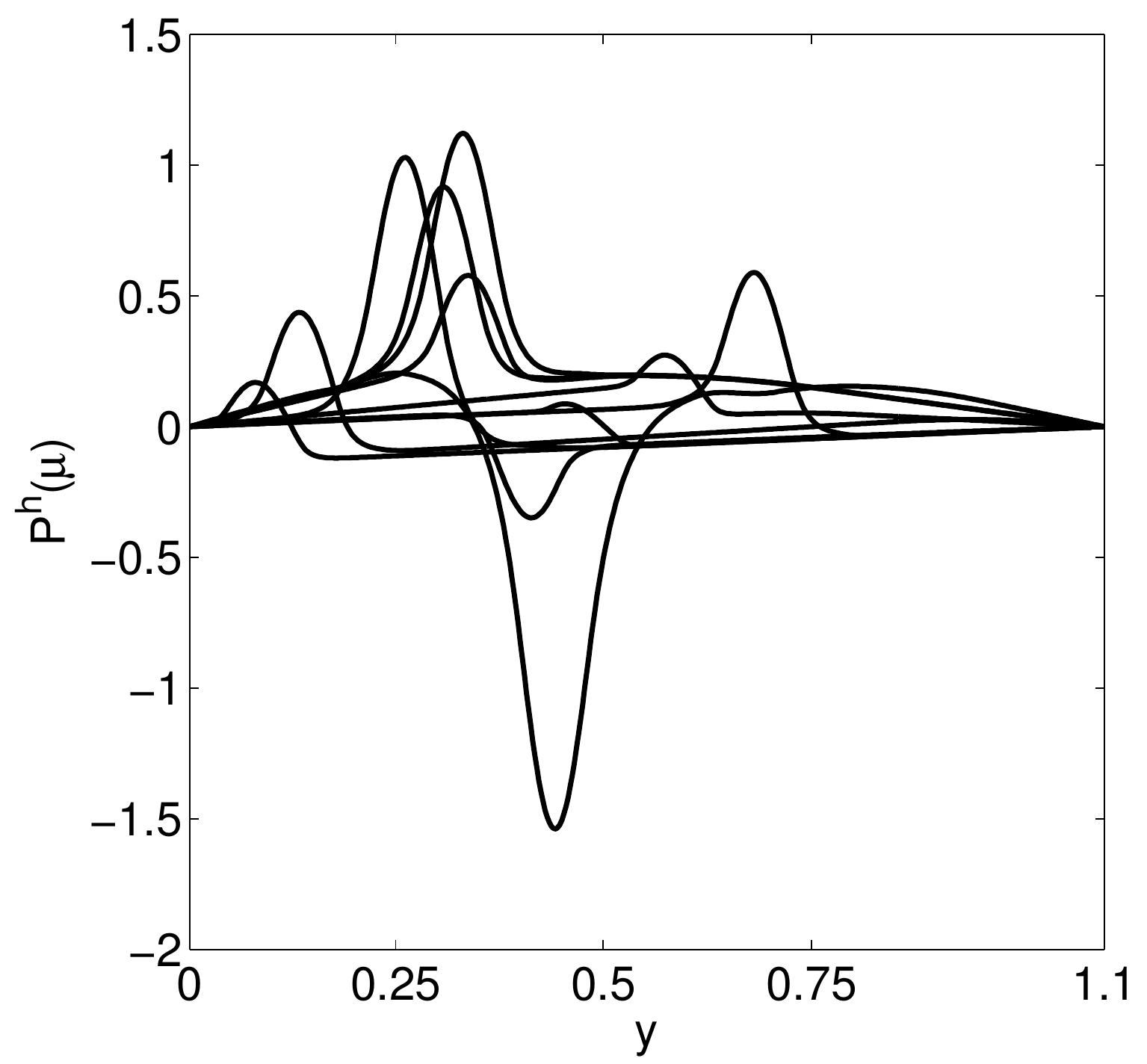}\label{figmani_sourceb}}
\subfloat[{\footnotesize \eqref{1D_prob_quad_para_gd}, $\bar{Q}=1$}]{
\includegraphics[scale = 0.23]{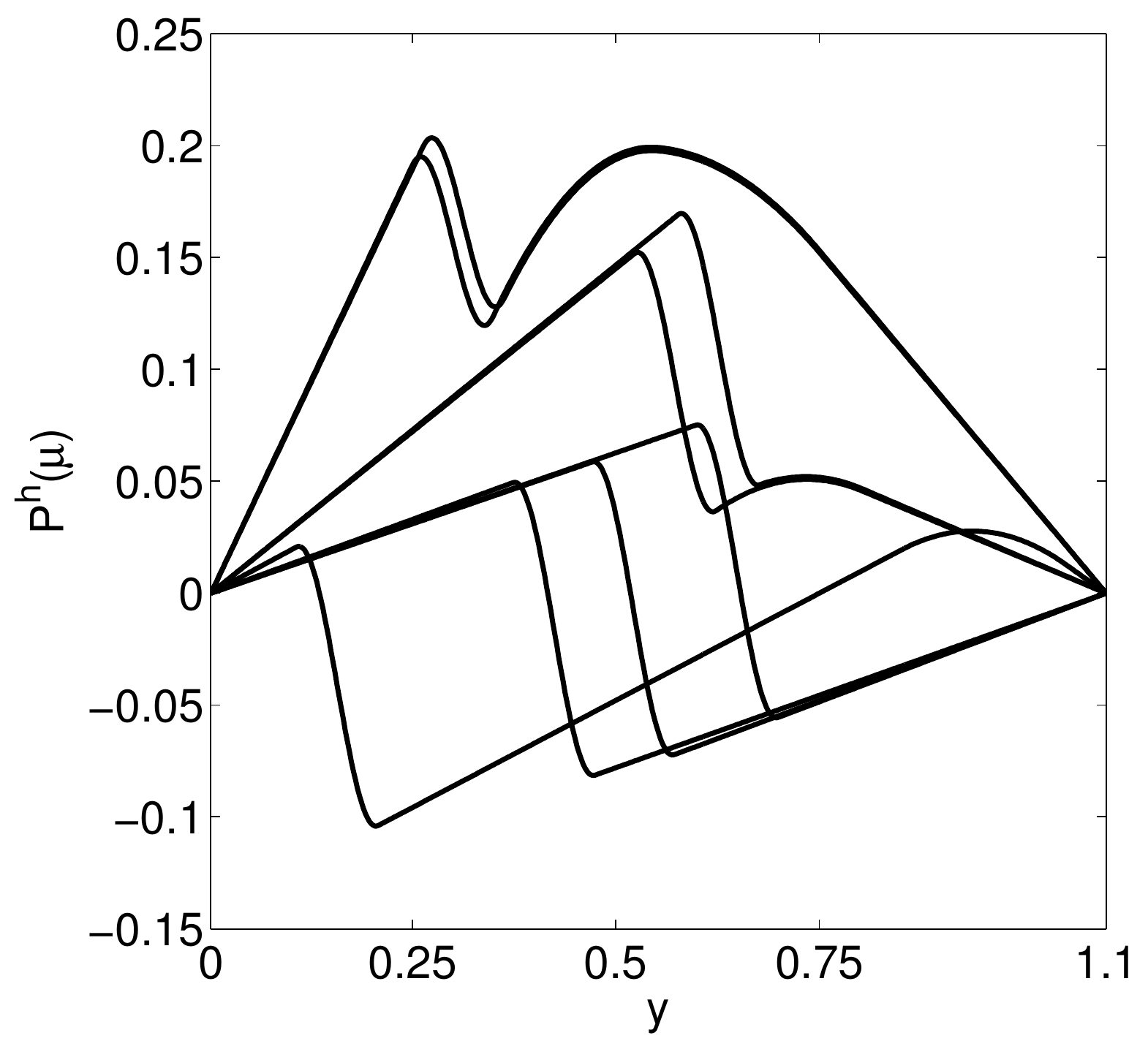}
 \label{figmani_sourced}}
\caption{Test case 2: Typical snapshots of the discrete solution manifold $\mathcal{M}_{\Xi}$ \eqref{disc_mani} if the lifting function $g_{D}$ \eqref{lifting_gd2} is used and therefore no information on the interface is included in the model reduction procedure (a), if $\Delta \mathfrak{h}$ is added to $F$ in \eqref{red_prob_gd}  (b), \eqref{1D_prob_quad_para_gd} is solved with $\bar{Q}=2$ (c) and $\bar{Q} = 1$ (d).\label{figmani_source}}
\end{figure}

Studying the convergence behavior of $\|e_{m}\|_{V}^{rel}$ we observe a bigger improvement of the convergence behavior for $m \leq 60$ for $H = 0.003125$ when using the lifting function $\mathfrak{h}$ instead of $g_{D}$, in comparison to the previous example (Fig.~\ref{fig46a}, Fig.~\ref{fig46b}, Fig.~\ref{fig44b}). For $m>60$ the convergence rate is comparable. Adding $\Delta \mathfrak{h}$ to the source term $F$ in \eqref{red_prob_gd} and omitting the term $-a(\mathfrak{h},v_{m})$ instead yields an exponential convergence rate of $\|e_{m}\|_{V}^{rel}$ (Fig.~\ref{fig46c}). The behavior of $\|e_{m}\|_{V}^{rel}$ in both cases --- either including $\mathfrak{h}$ in the weak (Fig.~\ref{fig46b}) or in the strong formulation (Fig.~\ref{fig46c}) ---
demonstrates the capacity of the ansatz proposed in \S \ref{ansatz_interface} to improve the convergence behavior for the RB-HMR approach. 
%As however the convergence rates of $e_{m}^{\text{{\tiny POD }}}$ and $\|e_{m}\|_{L^{2}(\Omega)}^{rel}$ and $\lambda_{m}$ and $\|\bar{p}_{m}\|_{L^{2}(\Omega_{1D})}$, respectively, for the solution of \eqref{prob_gd} are roughly the same (Fig.~\ref{fig46d}, \ref{fig46e}) and differ from the ones obtained if including $\Delta \mathfrak{h}$ in the strong formulation of \eqref{red_prob_gd}, we conclude that also for this test case the equivalence of including $\mathfrak{h}$ in the strong or weak formulation of \eqref{red_prob_gd} cannot be reproduced by the parametrized 1D problem, prohibiting an exponentially fast approximation of the reference solution $\tilde{p}^{H\times h}$ of \eqref{truth_prob_gd} by $\tilde{p}_{m}^{H}$ --- the solution of \eqref{prob_gd}. Finally, we observe again a significant deterioration of the convergence rate of $\|e_{m}\|_{V}^{rel}$ for the solution of \eqref{prob_gd} for one quadrature point in comparison to $\bar{Q}=2$ (Fig.~\ref{fig46f}). \\

%The fact that the convergence behavior of $e_{m}^{\text{{\tiny POD}}}$ for the present and the previous test case coincides (Fig.~\ref{fig44d},\ref{fig46d}), suggests that the interface is the decisive factor in the approximation behavior of the discrete reduced solution of \eqref{prob_gd}. 
In Fig.~\ref{figmani_source} some exemplary snapshots $\mathcal{P}^{h}(\mu)$ are depicted in order to help to explain the convergence behavior observed in Fig.~\ref{fig46}. In case we add $\Delta \mathfrak{h}$ to $F$, the snapshots look like the profile of $p^{H \times h}$ in the transverse direction (Fig.\ref{figmani_sourcea}), whereas the solutions of \eqref{1D_prob_quad_para_gd} for $\bar{Q}=2$ additionally exhibit a peak around the interface (Fig.\ref{figmani_sourceb}). If we use $g_{D}$ as the lifting function, the snapshots $\mathcal{P}^{h}(\mu)$ feature very strong oscillations but barely resemble the slope of $p^{H\times h}$ (Fig.\ref{figmani_sourcec}).  Also the snapshots for $\bar{Q}=1$ in \eqref{1D_prob_quad_para_gd} contain only few information on the profile of $p^{H\times h}$ in the transverse direction (Fig.\ref{figmani_sourced}), which yields a significant deterioration of the convergence rate of $\|e_{m}\|_{V}^{rel}$. This shows again that the artificial coupling introduced in \S \ref{1dproblem_bilFEM} is necessary to obtain a good approximation behavior of the RB-HMR approach. 
\begin{figure}[t]
\center
\includegraphics[scale=0.55]{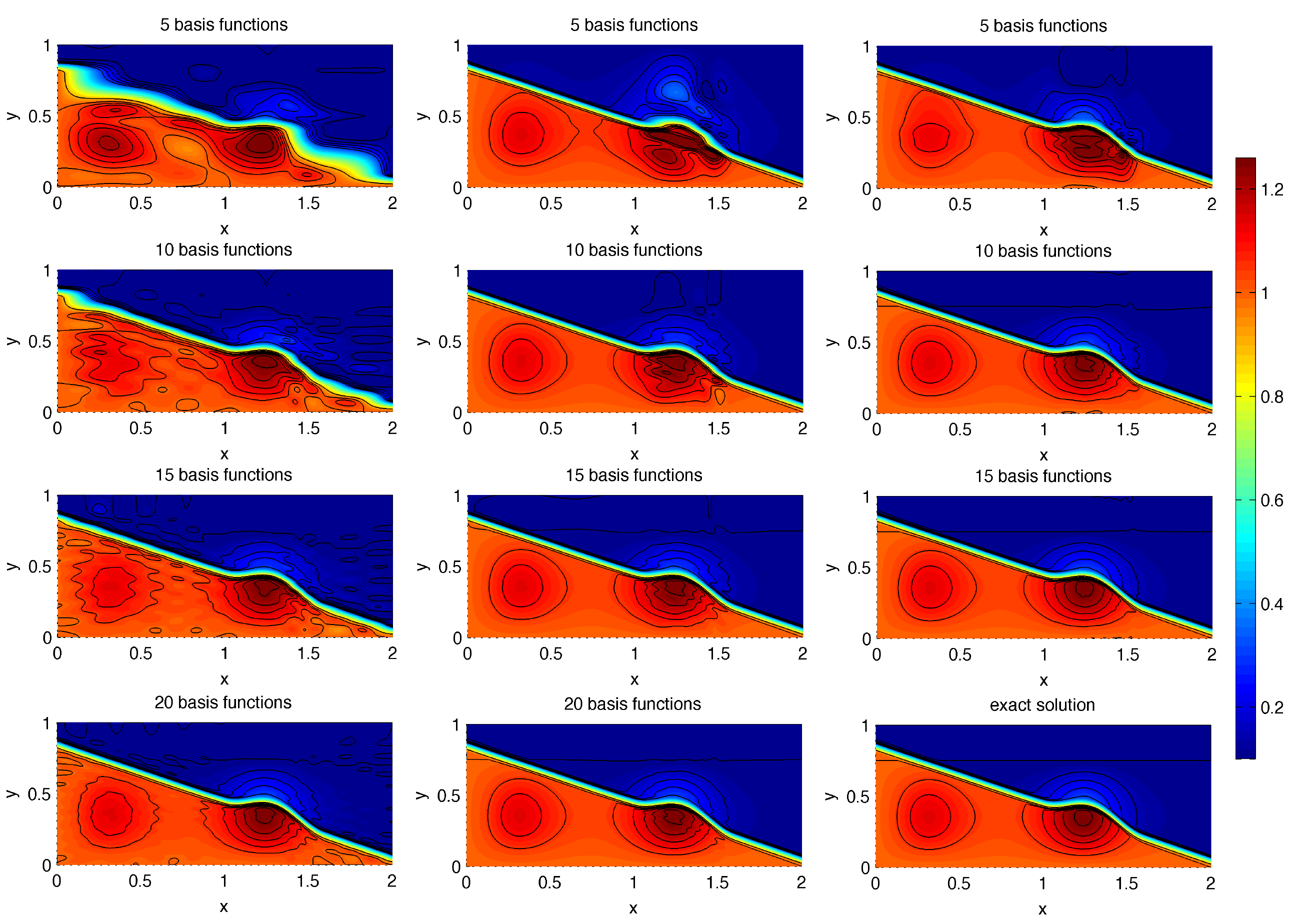} 
\caption{Test case 3: Comparison of the discrete reduced solution $\tilde{p}_{m}^{H}$ using the lifting function $g_{D}$  (left side) for $m =5,10,15,20$ (top-bottom), $\tilde{p}_{m}^{H}$ using $\mathfrak{h}$ \eqref{test3_interface_approx} as the lifting function in \eqref{prob_gd}  and $m =5,10,15,20$ (middle), $\tilde{p}_{m}^{H}$ if adding $\Delta \mathfrak{h}$ to $F$ in \eqref{prob_gd} and neglecting the term $-a(\mathfrak{h},\xi_{i}^{H}\phi_{l})$ instead for $m =5,10,15$ (right)  with the exact solution $\tilde{p}$ (right, bottom) for $N_{H}=800$, $n_{h}=400$, $N_{H'}=80$.\label{figwelle_gest_loesung}}
\end{figure}

\paragraph*{Test case 3}
\begin{figure}[t]
\centering
\subfloat[{\footnotesize lifting function $g_{D}$} ]
{\includegraphics[scale = 0.31]{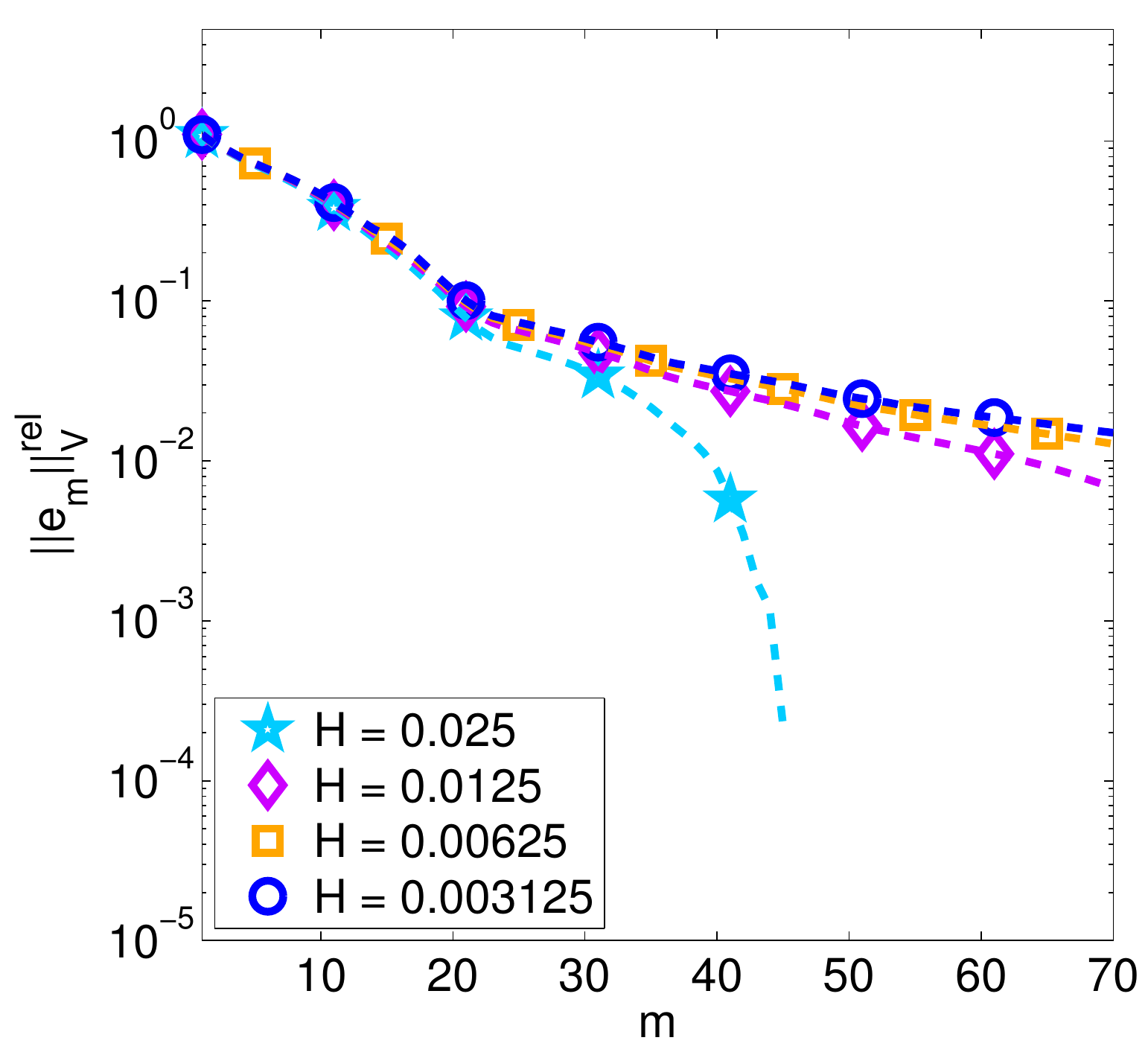}\label{fig48a}}
\subfloat[{\footnotesize solution of \eqref{prob_gd}, $\bar{Q}=2$}]{
\includegraphics[scale = 0.31]{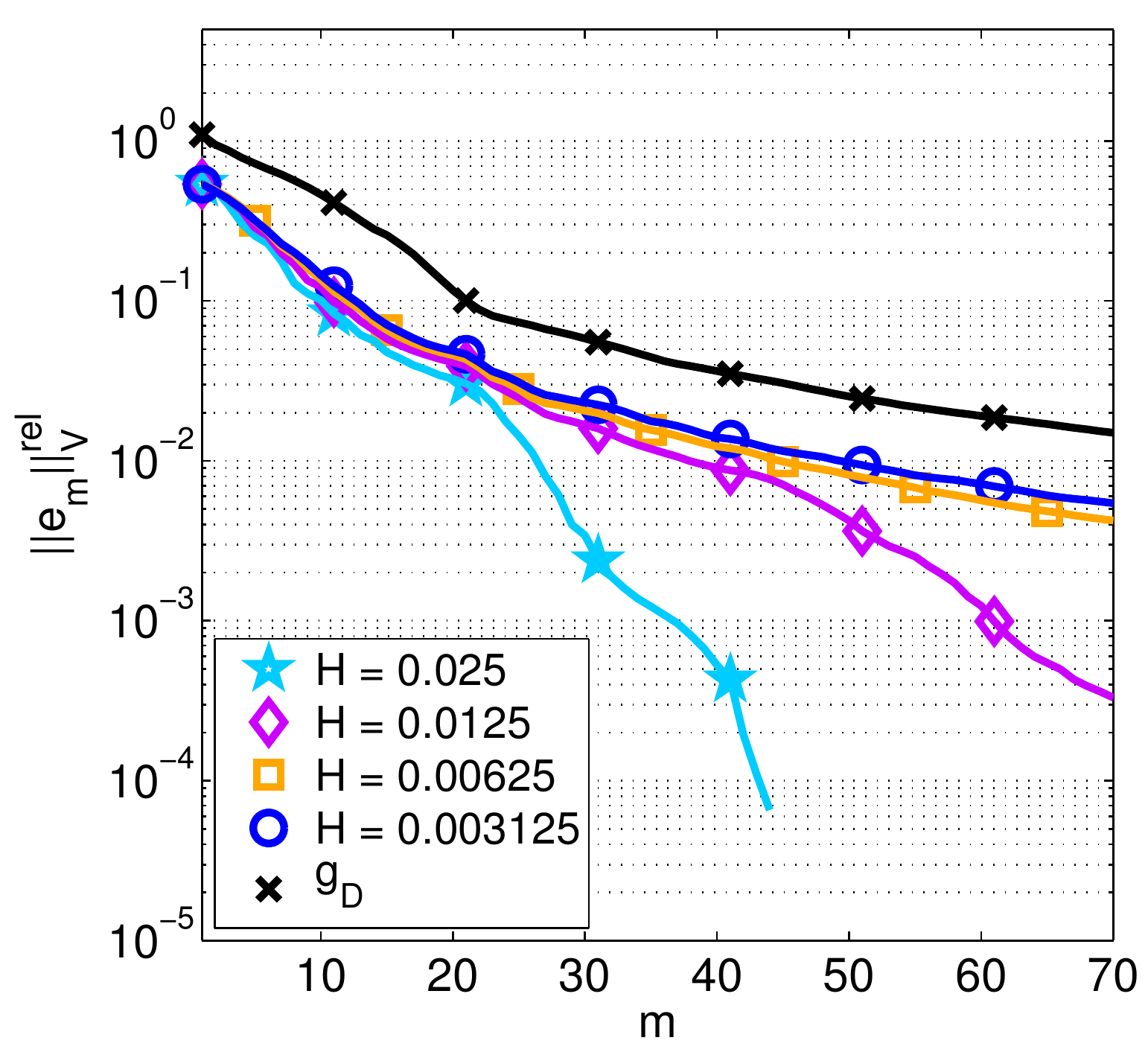}\label{fig48b}}
\subfloat[{\footnotesize  including $\Delta \mathfrak{h}$}]{
\includegraphics[scale = 0.31]{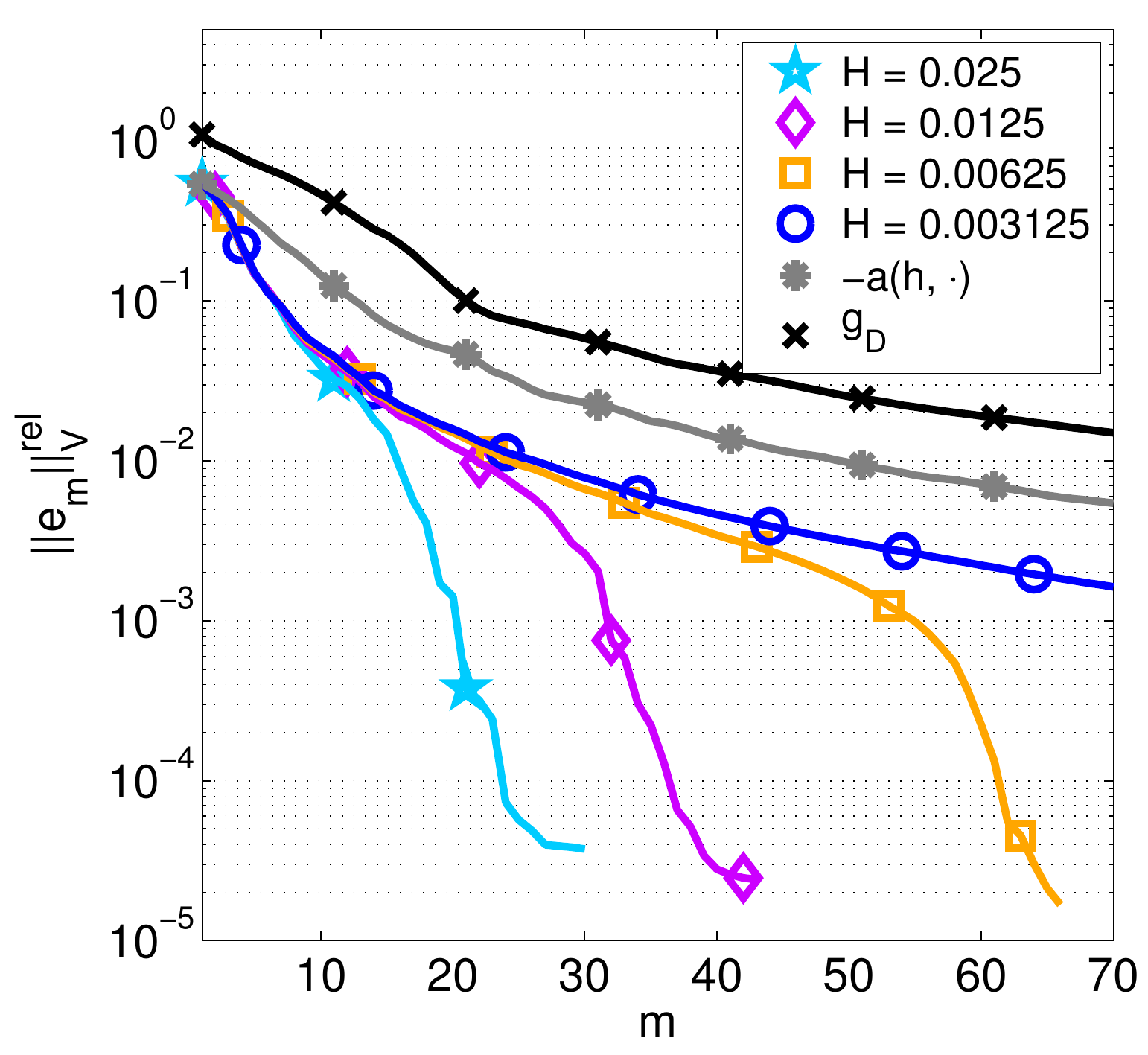}\label{fig48c}}
\caption{Test case 3:  Comparison of the model error convergence
of $\|e_{m}\|_{V}^{rel}$. (a): We use the lifting function $g_{D}$ defined in \eqref{lifting_gd}. (b): We consider \eqref{prob_gd} and choose $\bar{Q} = 2$ in \eqref{1D_prob_quad_para_gd}. (c): We consider the right hand side $F+\Delta \mathfrak{h}$ in \eqref{prob_gd} and neglect the term $-a(\mathfrak{h},\xi_{i}^{H}\phi_{l})$ instead. All plots: $N_{H'}=10$.\label{fig481}}
\vspace*{-10pt}
\end{figure}

\begin{figure}[t]
\centering
\subfloat[{\footnotesize $\|e_{m}\|_{L^{2}(\Omega)}^{rel}$ and $ e_{m}^{\mbox{{\tiny POD}}}$}]{
\includegraphics[scale = 0.31]{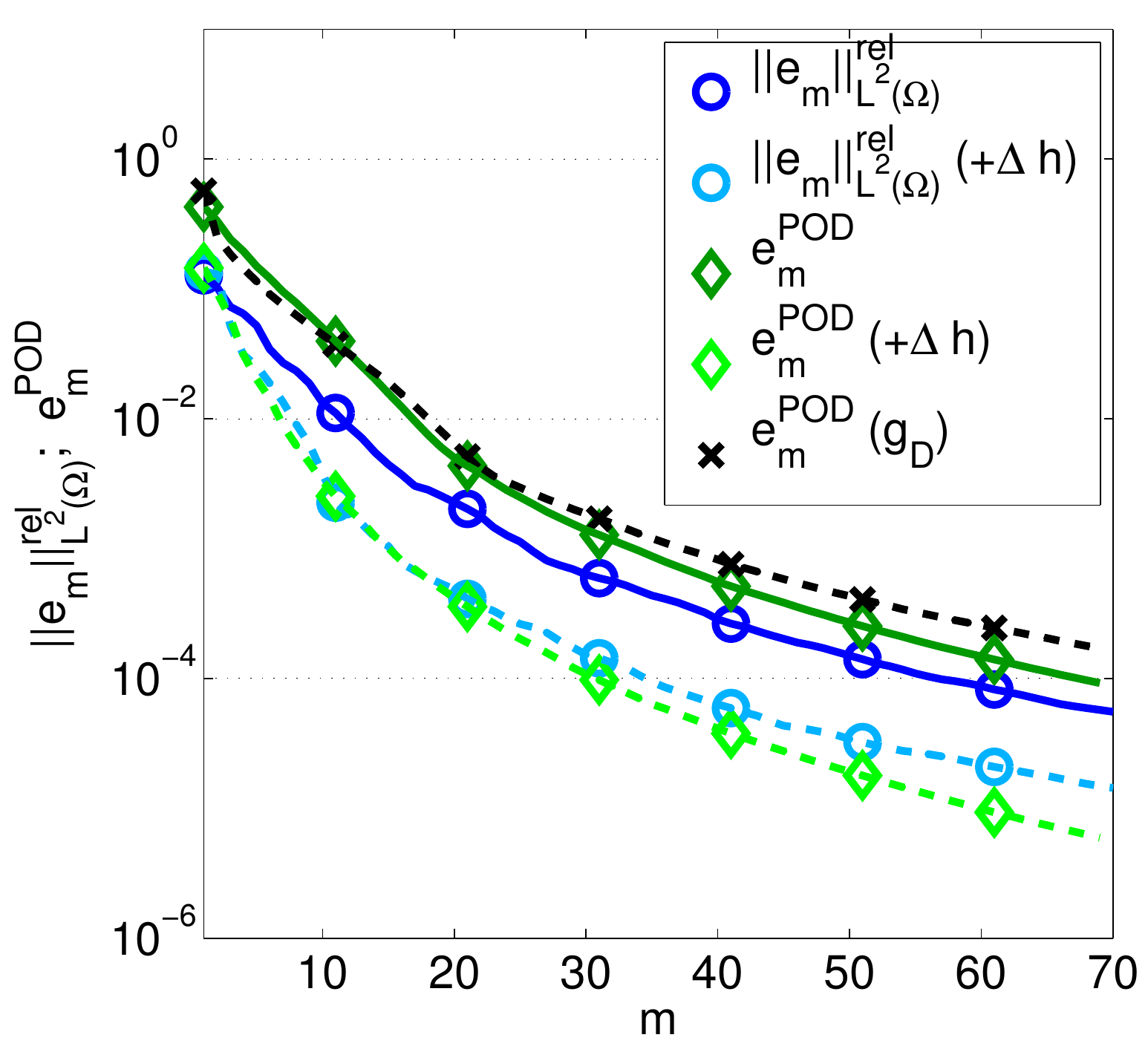} \label{fig48d}}
\subfloat[{\footnotesize $\lambda_{m}$ and $\|\bar{p}_{m}^{H}\|_{L^{2}(\Omega_{1D})}$}]{
\includegraphics[scale = 0.31]{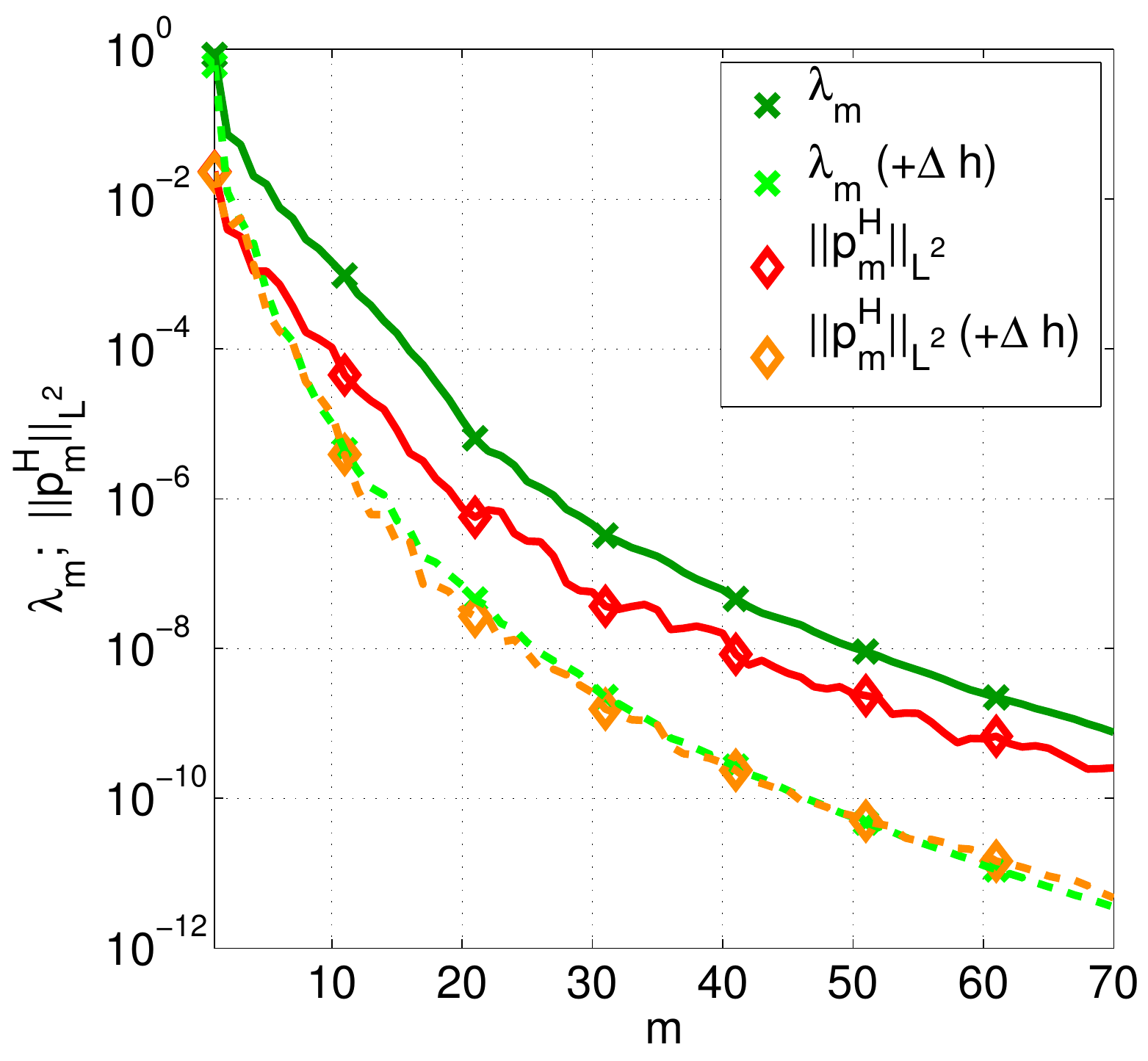}\label{fig48e}}
\subfloat[{\footnotesize $\|e_{m}\|_{V}^{rel}$, solution of \eqref{prob_gd}, $\bar{Q}=1$}]{
\includegraphics[scale = 0.31]{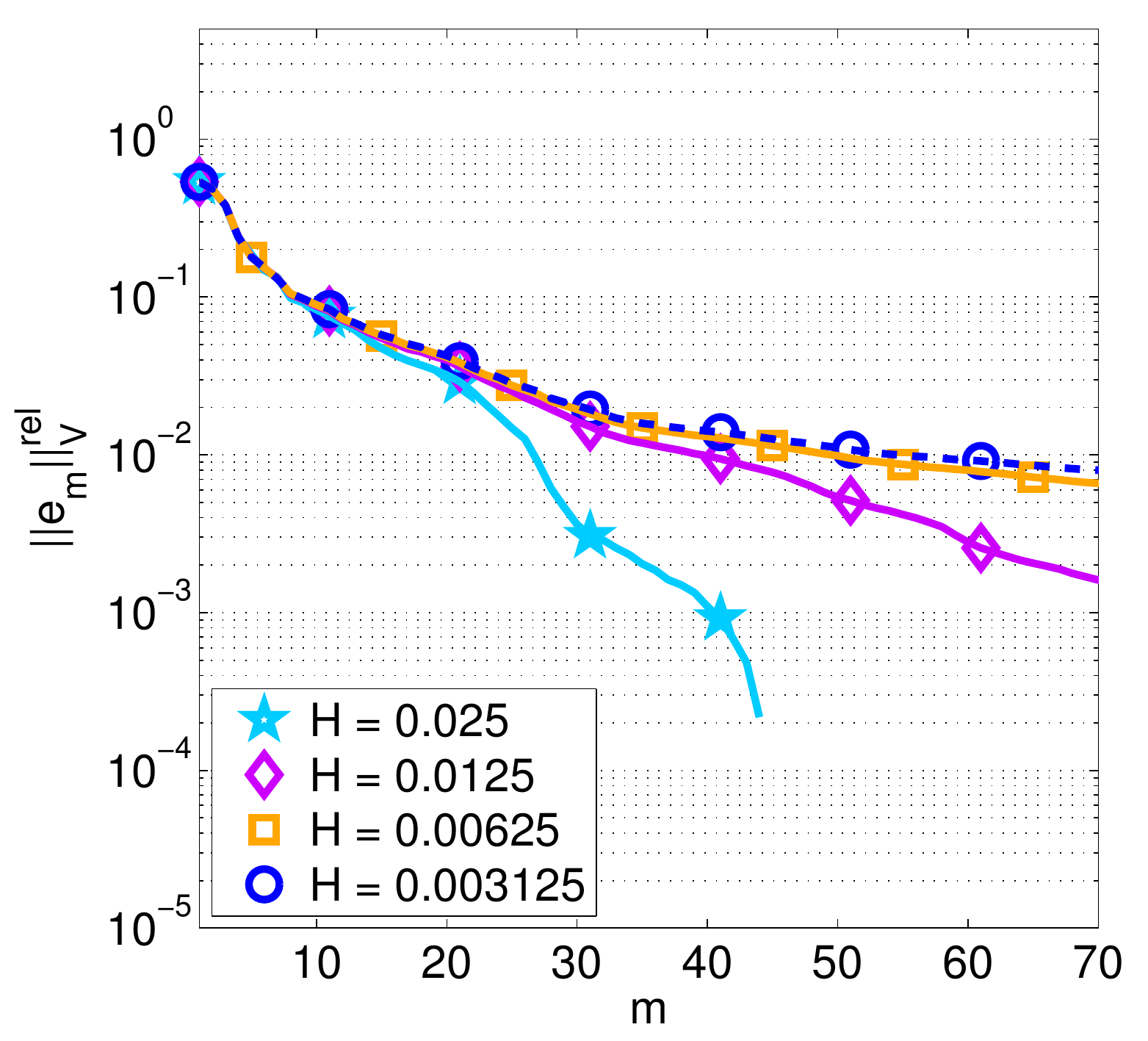} \label{fig48f}}
\caption{Test case 3: (a): Comparison of $\|e_{m}\|_{L^{2}(\Omega)}^{rel}$ and $e_{m}^{\mbox{{\tiny POD}}}$ for the following situations and legend entries: $e_{m}^{\mbox{{\tiny POD}}}$ or $\|e_{m}\|_{L^{2}(\Omega)}^{rel}$: We consider the discrete reduced problem \eqref{prob_gd}. $e_{m}^{\mbox{{\tiny POD}}} (+ \Delta \mathfrak{h})$ or $\|e_{m}\|_{L^{2}(\Omega)}^{rel} (+ \Delta \mathfrak{h})$: We consider the right hand side $F+\Delta \mathfrak{h}$ in \eqref{prob_gd} and neglect the term $-a(\mathfrak{h},\xi_{i}^{H}\phi_{l})$ instead. $e_{m}^{\mbox{{\tiny POD}}} (g_{D})$: We use the lifting function $g_{D}$ defined in \eqref{lifting_gd}. (b) Comparison of $\lambda_{m}$ and $\|\bar{p}_{m}^{H}\|_{L^{2}(\Omega_{1D})}^{2}$ for the solution of \eqref{prob_gd} or for the solution of \eqref{prob_gd} with a right hand side $F+\Delta \mathfrak{h}$ and no term $-a(\mathfrak{h},\xi_{i}^{H}\phi_{l})$ (legend entry contains additionally $(+ \Delta \mathfrak{h})$).  (c): Comparison of $\|e_{m}\|_{V}^{rel}$ for \eqref{prob_gd} and $\bar{Q} = 1$ in \eqref{1D_prob_quad_para_gd}. All plots: $N_{H'}=10$. \label{fig48}}
\vspace*{-10pt}
\end{figure}

Finally, we consider a test case in which we do not include the exact interface $\bar{\mathfrak{h}}$ but only an approximation $\mathfrak{h}$ as we usually cannot expect to be able to determine the exact interface but rather only an approximation. We solve a Poisson problem on $\Omega = (0,2) \times (0,1)$, where we choose the exact solution $\tilde{p} = p + \bar{\mathfrak{h}}$ where
\begin{align*}
\bar{\mathfrak{h}}(x,y) &= \begin{cases}
                        1 \quad &\text{if} \enspace y + 0.4x -\mathfrak{j}(x)< 0.8,\\
                        0.1 \quad & \text{if} \enspace y + 0.4x-\mathfrak{j}(x) > 0.9,\\
                        0.55 + 0.45\cos(10\pi (y+0.4x-0.8-\mathfrak{j}(x))) &\text{if} \enspace 0.8 \leq y+0.4x-\mathfrak{j}(x)\leq 0.9
                        \end{cases}\\
\nonumber \text{for} \enspace  &\mathfrak{j}(x) = \begin{cases}  
                        0.1\sin ^{2}(\frac{5\pi}{3} (x-1)) \quad &\text{if} \enspace 1\leq x \leq 1.6,\\
                        0  \quad &\text{else},
                         \end{cases}\\
\text{and} \enspace  &p(x,y) = 5y^{2}(1-y)^{2}(0.75-y)x(2-x)\exp(\sin(2\pi x)), \qquad \qquad
\end{align*}
which is the same choice of $p$ as in test case 1. The exact solution $\tilde{p}$ is depicted in the last picture of Fig.~\ref{figwelle_gest_loesung} and it can be seen that $\bar{\mathfrak{h}}$ is curved between $x =1$ and $x =1.6$. In Fig.~\ref{figwelle_gest_loesung} on the left we see the discrete reduced solution $\tilde{p}_{m}^{H}$, computed with the  same lifting function 
$
g_{D} = (-0.5x+1) \bar{\mathfrak{h}}(0,y) +0.5x \bar{\mathfrak{h}}(2,y)
$
as in test case 1 for $m = 5,10,15,20$, $N_{H}= 800$, $n_{h} =400$ and $N_{H'} = 80$  and observe a qualitative convergence behavior very similar to the one in test case 1 (Fig.~\ref{figwelle_loesung}, left). Next, we compare the exact solution $\tilde{p}$ with the discrete reduced solution $\tilde{p}_{m}^{H}$, which has been computed using the approximating interface
\begin{align}
\label{test3_interface_approx} \mathfrak{h}(x,y) &= \begin{cases}
                        1 \quad &\text{if} \enspace y + 0.4x < 0.8,\\
                        0.1 \quad & \text{if} \enspace y + 0.4x > 0.9,\\
                        0.55 + 0.45\cos(10\pi (y+0.4x-0.8)) &\text{if} \enspace 0.8 \leq y+0.4x\leq 0.9
                        \end{cases}
\end{align}
for $m = 5,10,15,20$, $N_{H}= 800$, $n_{h} =400$ and $N_{H'} = 80$ (Fig.~\ref{figwelle_gest_loesung}, middle). We observe that the bending of the actual interface $\bar{\mathfrak{h}}$, which is not included in the approximation $\mathfrak{h}$, causes oscillations in the interval $[0.9,1.6] \subset \Omega_{1D}$ yielding a slower approximation behavior of $\tilde{p}_{m}^{H}$ than in test case 1 (Fig.~\ref{figwelle_loesung}, right). Eventually, $20$ basis functions are required to obtain a good approximation of $\tilde{p}$. Overall we still observe a significant improvement of the qualitative convergence behavior if including an approximation of the interface, but, as expected, the gain is not as large as if including the exact interface. If we consider the right-hand side $F +\Delta \bar{\mathfrak{h}}$ in \eqref{prob_gd}, omit the term $-a(\mathfrak{h},\xi_{i}^{H}\phi_{l})$, and compute the associated discrete reduced solution for $m = 5,10,15$, $N_{H}= 800$, $n_{h} =400$ and $N_{H'} = 80$ (Fig.~\ref{figwelle_gest_loesung}, right), we detect only few oscillations and see already for $\tilde{p}_{15}^{H}$ a good visual agreement with the exact solution $\tilde{p}$. 
\begin{figure}[t]
\centering
\subfloat[{\footnotesize lift. func. $g_{D}$ \eqref{lifting_gd2}}]{
\includegraphics[scale = 0.23]{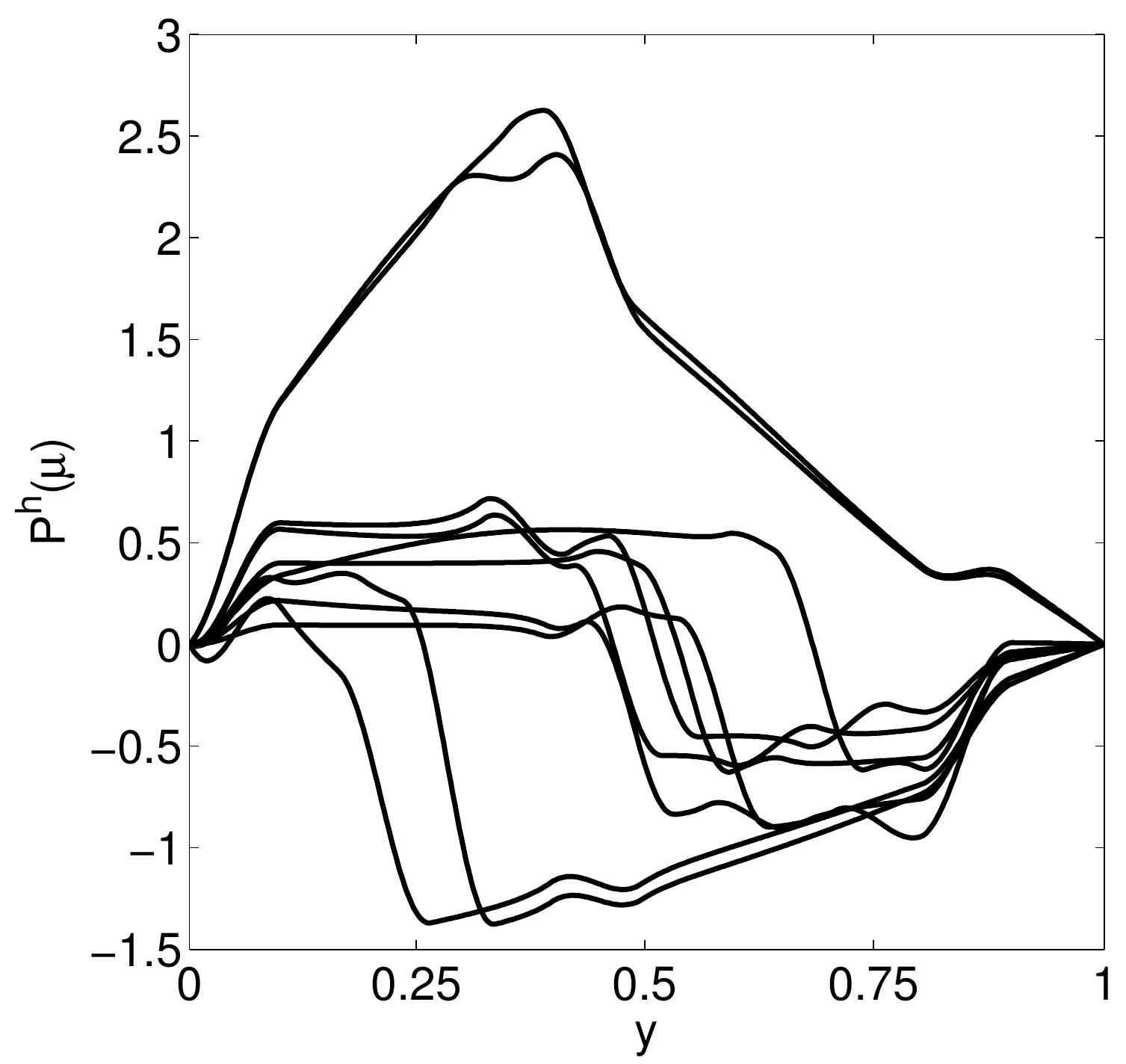}\label{figmani_welle_gestc}}
\subfloat[{\footnotesize incl. $\Delta \mathfrak{h}$}]{
\includegraphics[scale = 0.23]{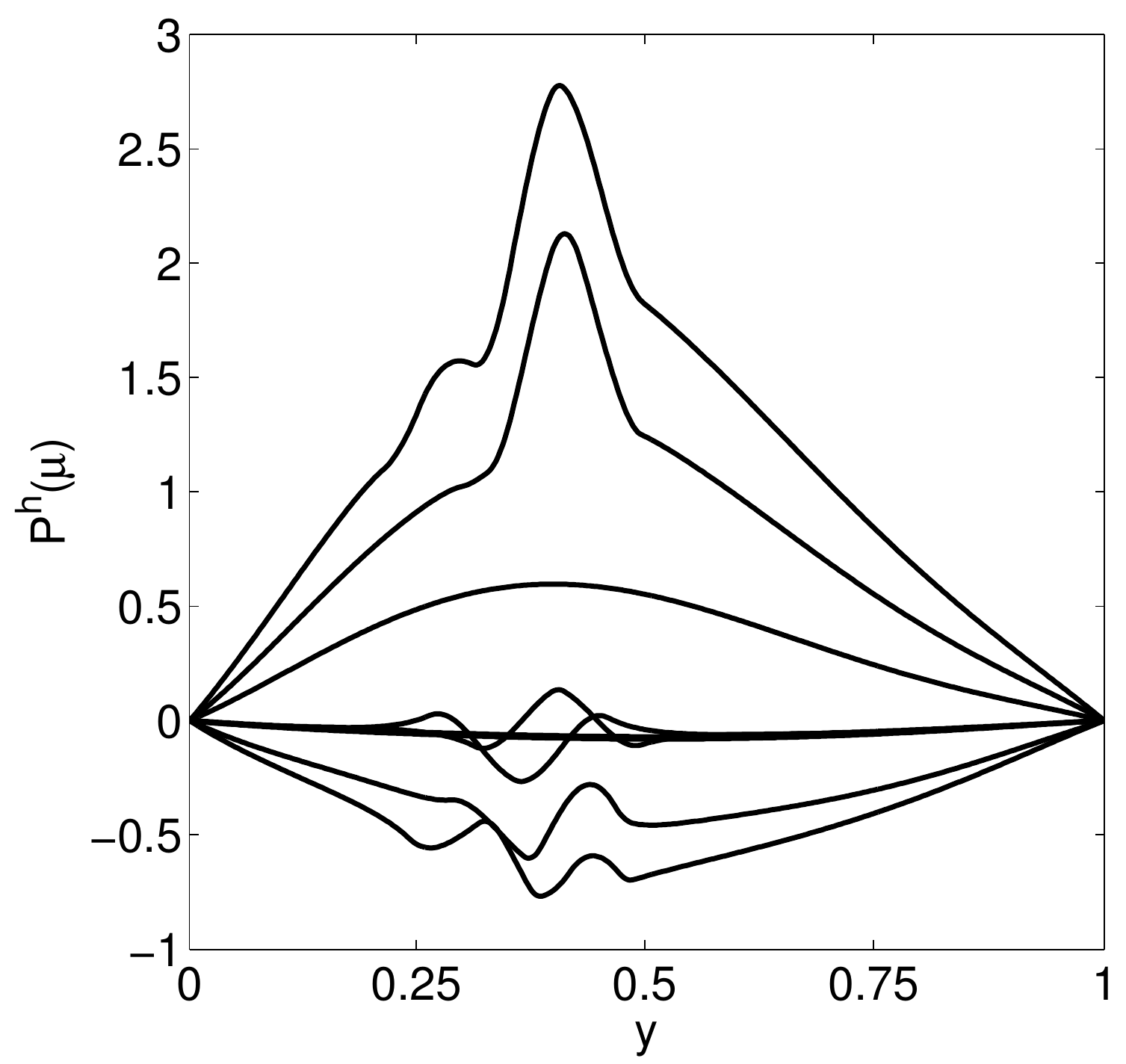}\label{figmani_welle_gesta}}
\subfloat[{\footnotesize \eqref{1D_prob_quad_para_gd}, $\bar{Q}=2$} ]
{\includegraphics[scale = 0.23]{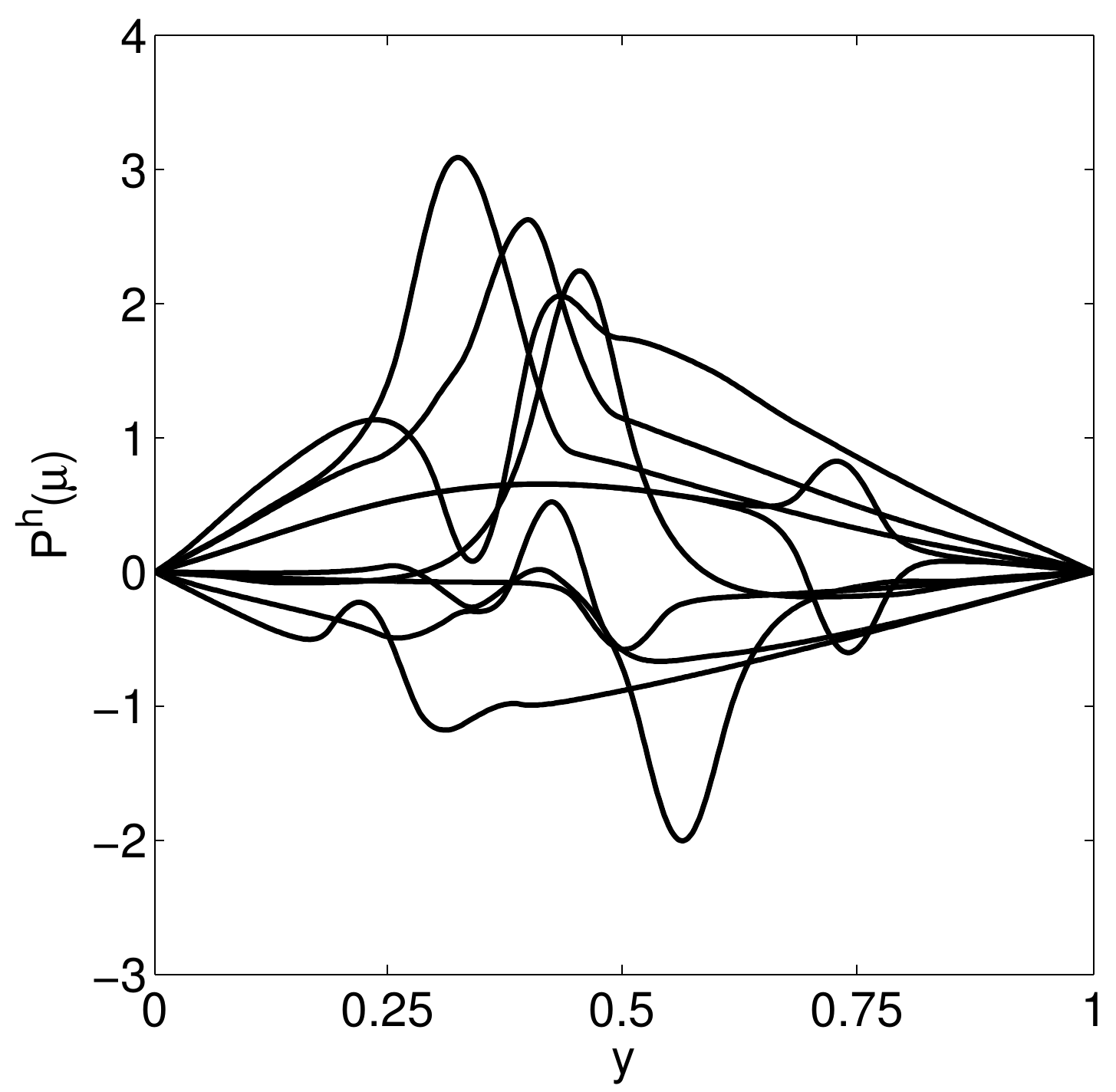}\label{figmani_welle_gestb}}
\subfloat[{\footnotesize \eqref{1D_prob_quad_para_gd}, $\bar{Q}=1$}]{
\includegraphics[scale = 0.23]{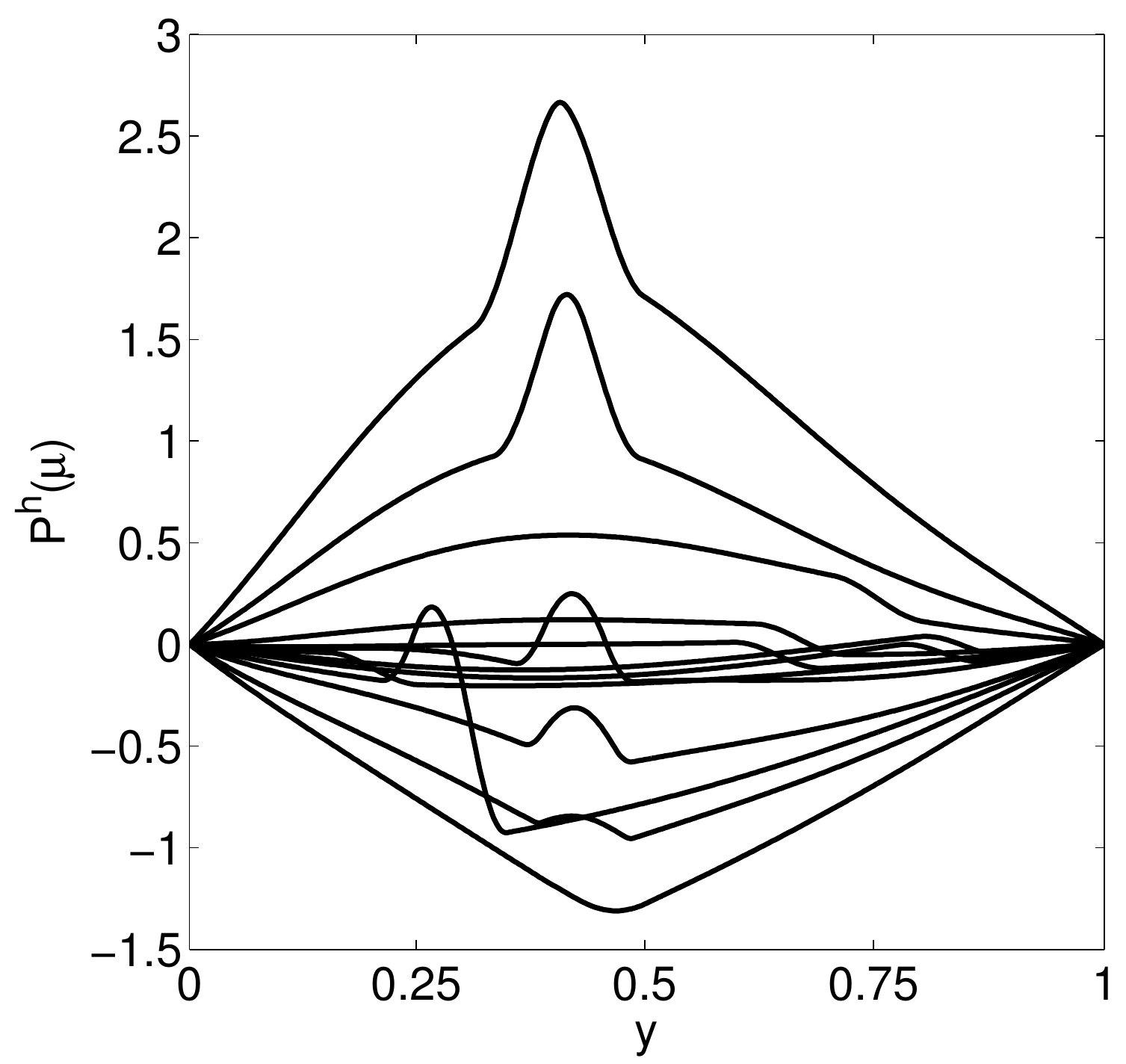} \label{figmani_welle_gestd}}
\caption{Test case 3: Typical snapshots of the discrete solution manifold $\mathcal{M}_{\Xi}$ \eqref{disc_mani} if the lifting function $g_{D}$ \eqref{lifting_gd2} is used and therefore no information on the interface is included in the model reduction procedure (a), if $\Delta \mathfrak{h}$ is added in the strong formulation of \eqref{red_prob_gd}  (b), \eqref{1D_prob_quad_para_gd} is solved with $\bar{Q}=2$ (c) and $\bar{Q} = 1$ (d).\label{figmani_welle_gest}}
\end{figure}

Analyzing the convergence behavior of the relative model error $\|e_{m}\|_{V}^{rel}$, we see in Fig.~\ref{fig48b} that on the one hand solving $\eqref{prob_gd}$ with the approximating interface $\mathfrak{h}$ enhances the convergence behavior but on the other hand the improvement is much smaller than in the two previous examples. Adding $\Delta \mathfrak{h}$ to $F$ in \eqref{prob_gd} and omitting the term $-a(\mathfrak{h},\xi_{i}^{H}\phi_{l})$ instead further improves both the convergence behavior and rate of $\|e_{m}\|_{V}^{rel}$ (Fig.~\ref{fig48c})  and $e_{m}^{\text{{\tiny POD}}}$ (Fig.~\ref{fig48d}) but not to the same extend as in the test cases 1 and 2. This supports the findings gained in the qualitative convergence analysis. As the convergence rate of $e_{m}^{\text{{\tiny POD}}}$ is better than the one of $\|e_{m}\|_{L^{2}(\Omega)}^{rel}$ if we include $\Delta \mathfrak{h}$, the convergence behavior of the model error may perhaps be further improved for instance by increasing the sample size. Finally, we observe in Fig.~\ref{fig48f} that the convergence behavior of $\|e_{m}\|_{V}^{rel}$ for choosing $\bar{Q}=1$ in \eqref{1D_prob_quad_para_gd} is only slightly worse than the one for $\bar{Q}=2$. This can be explained by the fact that the snapshots of \eqref{1D_prob_quad_para_gd} for $\bar{Q} =1$ (Fig.~\ref{figmani_welle_gestd}) resemble very much both the snapshots of \eqref{1D_prob_quad_para_gd} for $\bar{Q}=2$ (Fig.~\ref{figmani_welle_gestb}) and the snapshots $\mathcal{P}^{h}(\mu)$ if including $\Delta \mathfrak{h}$ (Fig.~\ref{figmani_welle_gesta}). Also for this test case the snapshots computed using the lifting function $g_{D}$ contain hardly any information on the profile of $p$ (Fig.~\ref{figmani_welle_gestc}), prohibiting a fast convergence to the exact solution. \\

We conclude the numerical experiments with some remarks on the reconstruction procedure. As computing $\mathcal{R}^{H\times h}$ with \eqref{Riesz_gd} is as expensive as solving the reference solution, this strategy is not feasible from a computationally viewpoint. We therefore propose to use an ansatz, which has some similarities with an oversampling strategy (see for instance \cite{HenPet13} and references therein), that is employed in the field of Multi-scale Methods. Precisely, when deriving the parametrized dimensionally reduced problem \eqref{1D_prob_quad_para_gd} for a parameter value $\mu \in \Xi_{train}$, we first solve 
\begin{equation*}
\int_{(\mu-R,\mu + R)\times \widehat{\omega}} \mathcal{R}^{H\times h}_{\mu} v^{H\times h} = a(\nabla \mathfrak{h}, v^{H\times h})_{(\mu -R,\mu + R) \times \widehat{\omega}}.
\end{equation*}
$\mathcal{R}_{\mu}^{H\times h}(\mu,\cdot)$ can then be added to the source term in the parametrized dimensionally reduced problem \eqref{1D_prob_quad_para_gd}. Studying such type of reconstructions is a task for future research.

\section{Conclusion and Perspectives}\label{conclusions}

We have suggested a new ansatz for the approximation of PDE solutions with interfaces which are skewed with respect to the coordinate axes with tensor-based model reduction approaches. 
Using the example of subsurface flow, we have demonstrated how to compute the height of the water table by solving a dimensionally reduced problem, which has been derived by assuming a hydrostatic pressure distribution. For other applications one can proceed in the same manner, if a dimensionally reduced problem which locates the interface can be derived. This can in general be achieved by using an asymptotic expansion. For advection-diffusion problems we have outlined how the location of the interface can be inferred from data functions. By choosing the obtained solution profile as the lifting function of the Dirichlet boundary conditions, we hope to remove the part of the full solution which can be badly approximated by a tensor-based model reduction approach, and thus prevents a fast convergence of the latter.  Exemplifying the proposed ansatz for the RB-HMR approach we have derived a coupled parametrized lower dimensional problem starting from the reference FE approximation, where the parameter vector coincides with the quadrature points. We solve for the unknown parts of the solution in the dominant direction via the coupling. 

For advection-diffusion problems we have demonstrated in the numerical experiments that the described procedure yields a very good approximation of the location of the interface even in the presence of a strong advective field. Moreover, the numerical experiments demonstrate that the convergence behavior improves considerably also if we can locate the interface only approximately. The validation of the proposed derivation of the parametrized coupled 1D problem shows that the 1D problems do not reproduce properly the equivalence of including the interface in the strong or the weak formulation of the PDE. Here, we have obtained a worse approximation behavior for the inclusion in the weak form. A possible ansatz to solve this problem is to use a reconstruction of the derivative of the interface in the dominant direction which mimics its behavior in two space dimensions. Using the Riesz representative of the lifting function is one example for such a reconstruction. The numerical experiments demonstrated that by employing this reconstruction the features of the full solution apart from the interface can be approximated exponentially fast if these features allow for such a rate. Finally, the numerical tests show that the features of the full solution apart from the interface are approximated very slowly if no information on the interface is included. 
We therefore expect the proposed ansatz to be beneficial for instance when applied within the context of subsurface flow. \\

\parindent 0pt {\bf Acknowledgements:} The authors would like to thank Dr. Stephan Rave for fruitful discussions.

%% The Appendices part is started with the command \appendix;
%% appendix sections are then done as normal sections
%% \appendix

%% \section{}
%% \label{}

%% If you have bibdatabase file and want bibtex to generate the
%% bibitems, please use
%%
%  \bibliographystyle{elsarticle-num} 
 % \bibliography{diss.bib}

%% else use the following coding to input the bibitems directly in the
%% TeX file.
%\section*{References}

\end{document}